\pgfplotsset{compat=1.14}
\DeclareMathOperator\supp{supp}
\DeclarePairedDelimiter\ceil{\lceil}{\rceil}
\theoremstyle{plain}
\newtheorem{theorem}{Theorem}[section]
\newtheorem*{theorem*a}{Theorem A}
\newtheorem*{theorem*b}{Theorem B}
\newtheorem*{theorem*c}{Theorem C}
\theoremstyle{plain}
\newtheorem{corollary}{Corollary}[theorem]
\theoremstyle{definition}
\newtheorem{example}{Example}[section]
\theoremstyle{remark}
\newtheorem{remark}{Remark}[section]
\theoremstyle{plain}
\newtheorem{lemma}{Lemma}[section]
\theoremstyle{plain}
\theoremstyle{definition}
\newtheorem{definition}{Definition}[section]
\theoremstyle{plain}
\newcommand{\ti}[1]{\tilde{#1}}
\newcommand{\ul}[1]{\underline{#1}}
\newcommand{\ov}[1]{\overline{#1}}
\newcommand{\R}{\mathbbm{R}}
\newcommand{\C}{\mathbbm{C}}
\newcommand{\e}{\varepsilon}
\newcommand{\Om}{\Omega}
\newcommand{\N}{\mathbb{N}}
\newcommand{\Z}{\mathbbm{Z}}
\newcommand{\mi}{\mathrm{i}}
\newcommand{\euler}{e}
\title{Stable Intersections of Conformal Cantor Sets}
\author{Hugo Araújo and Carlos Gustavo Moreira}
\begin{document}

\maketitle

\begin{small}
ABSTRACT. We investigate stable intersections of conformal Cantor sets and their consequences to dynamical systems. First we define this type of Cantor set and relate it to horseshoes appearing in automorphisms of $\C^2$.  Then we study limit geometries, objects related to the asymptotic shape of the Cantor sets, to obtain a criterion that guarantees stable intersection between some configurations. Finally we show that the Buzzard construction of a Newhouse region on $Aut(\C^2)$ can be seem as a case of stable intersection of Cantor sets in our sense and give some (not optimal) estimative on how \say{thick} those sets have to be.

\end{small}

\section{Introduction}
The theory of regular Cantor sets in the real line has played a central role in the study of dynamical systems, specially in relation to their hiperbolicity\footnote{that is, the existence of a decomposition of the tangent bundle over its limit set into two subbundles, one uniformly contracted by the action of the tangent map the other one uniformly expanded.}. One of its first results were the works of Newhouse \cite{n_1}, \cite{n_2} and \cite{n_3} where he showed that there is an open set $U$ in the space of $C^2$ diffeomorphisms of a compact surface ($Diff^2(\mathcal{M}^2)$) such that any diffeomorphism in $U$ is not hyperbolic. More than that, he observed that diffeomorphisms exhibthing a homoclinic tangency belonged to the closure of the set $U$. 

In those works he associated the presence of a tangency between the stable and unstable manifolds of a horseshoe, that is, a homoclinic tangency, to an intersection between two Cantor sets (constructed from the dynamical system). Then, an open set in $Diff^2(\mathcal{M}^2)$ with persistence of homoclinic tangencies is constructed via a pair of Cantor sets $(K_1, K_2)$ that have stable intersections, that is, $\tilde{K}_1\cap\tilde{K}_2$ is non-empty for any small pertubations $\tilde{K}_1$ and $\tilde{K}_2$  of $K_1$ and $K_2 $. To construct such a pair he developed a sufficient criterion for this phenomenon: the \emph{gap lemma}. Precisely, he defined $\tau(K)$, the thickness of a Cantor set $K$, a positive real number associated to the geometry of the gaps of $K$ and showed that if  the product $\tau(K_1) \cdot \tau(K_2)$ is larger than one for a pair of Cantor sets $K_1$ and $K_2$, then the pair $(K_1,K_2+t)$ has stable intersection for certain values of $t$. 

Similar results were obtained in other contexts, such as the work of Palis and Viana on larger dimensions \cite{palisviana} and of Duarte \cite{duarte_2000} on conservative systems. We are more interested however in the work of Buzzard \cite{buzz}, where he found an open region in the space of automorphisms of $\C^2$, i.e., holomorphic diffeomorphisms with holomorphic inverse, with persistent homoclinic tangencies. His strategy was very similar to the first work of Newhouse \cite{n_1}, constructing a \say{very thick} horseshoe, such that the Cantor sets, this time living in the complex plane, associated to it would also be \say{very thick}. However, the concept of thickness does not have a simple extension to this complex setting and so the argument to guarantee intersections between the Cantor sets after a small pertubation is different. It is worth noticing that a version of the \emph{gap lemma} for holomorphic Cantor sets was recently discovered, in 2018, by Biebler (see \cite{biebler}).

The objective of this paper is to present a criterion for stable intersection of Cantor sets that works for the Cantor sets derived from horseshoes appearing in automorphisms of $\C^2$. We begin by defining conformal Cantor sets. These sets are, roughly speaking,  the maximal invariant set of a $C^{1+\e}$ expansive map $g$ defined on a subset of $\R^2$ (satisfying some properties, see section 2.1 for details) with the key hypothesis that its derivative is conformal over the invariant set, that is, the Cantor set itself. Throughout this article we will freely identify a conformal operator over $\R^2$ with the operator over $\C$ given by a multiplication by a complex number. Precisely, if $A=\begin{bmatrix}\alpha &-\beta \\ \beta &\alpha\end{bmatrix}, \alpha, \beta \in \R$, we identify it with the operator over $\C$ given by the multiplication by $\alpha + \beta \cdot\mi$. Also, a map between two open sets of $\mathbb{C}^n$ is said to be $C^r\text{, } r \in \mathbbm{R}$, when seen as a map between two open subsets of $\mathbb{R}^{2n}$ it is $C^r$.

We then proceed to show that this is the appropriate concept to study horseshoes appearing in automorphisms of $\C^2$, which we call complex horseshoes. It is necessary to observe that this nomenclature already appears in the literature and was introduced in the thesis of Oberste-Vorth \cite{ob} as a complex version of the \emph{Smale horseshoe}. These complex horseshoes are a particular case of the concept of horseshoe presented in this text. Also, the work of Oberste-Vorth shows the existence of complex horseshoes whenever there is a transversal homoclinic intersection in an automorphism of $\C^2$, a fact that justify our interest in this kind of objects. The details regarding these objects are given in the second section of this paper, and its main theorem is copied below:

\begin{theorem*a} Let $\Lambda$ be a complex horseshoe for a automorphism $G \in Aut(\C^2)$ and $p$ be a periodic point $\Lambda$. Then, if $\varepsilon$ is sufficiently small, there is a parametrization  $\pi: U \subset \C \to W^u_\varepsilon(p)$ such that $\pi^{-1}(W^u_\varepsilon(p) \cap \Lambda)$ is a conformal Cantor set in the complex plane.
\end{theorem*a}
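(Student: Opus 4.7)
The plan is to build the expanding map that witnesses the conformal Cantor set structure directly from the action of $G$ on the unstable manifold, exploiting the fact that holomorphicity forces conformality.

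First, I would produce the parametrization $\pi$. Because $G$ is a holomorphic diffeomorphism and $p$ is a hyperbolic periodic point with a one-dimensional complex unstable eigenspace, the holomorphic stable/unstable manifold theorem gives that $W^u(p)$ is a one-dimensional complex submanifold of $\C^2$, biholomorphic to $\C$. For $\varepsilon$ small enough that $W^u_\varepsilon(p)$ is simply connected and relatively compact, there is a biholomorphism $\pi : U \to W^u_\varepsilon(p)$ from some open disk $U \subset \C$, which is in particular a holomorphic parametrization.

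Second, I would use the Markov structure of the complex horseshoe $\Lambda$ (developed in Section 2) to set up the dynamics on $W^u_\varepsilon(p)$. Let $R$ be the Markov rectangle containing $p$ and shrink $\varepsilon$ so that $W^u_\varepsilon(p)$ lies inside $R$. For each index $j$ such that some iterate $G^{n_j}$ maps a sub-rectangle $S_j \subset R$ across $R$ in the standard Markov fashion, set $P_j := W^u_\varepsilon(p) \cap S_j$; then $G^{n_j}$ sends $P_j$ onto the full slice $W^u_\varepsilon(p)$. Pulling everything back through $\pi$, I obtain pairwise disjoint compact pieces $\hat P_j := \pi^{-1}(P_j) \subset U$ and a map
\[
\hat g : \bigsqcup_j \hat P_j \longrightarrow U, \qquad \hat g|_{\hat P_j} := \pi^{-1} \circ G^{n_j} \circ \pi .
\]
By construction the maximal invariant set of $\hat g$ coincides with $\pi^{-1}(W^u_\varepsilon(p) \cap \Lambda)$, and the symbolic coding is the one inherited from the Markov partition of $\Lambda$.

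Third, I would check the axioms of a conformal Cantor set from Section 2.1. Smoothness and expansivity of $\hat g$ follow from hyperbolicity of $\Lambda$ together with the fact that $\pi$ is a biholomorphism; the Markov and separation properties are inherited from $\{R_j\}$. The key point, conformality of $D\hat g$ on the invariant set, is then automatic: on each piece $\hat P_j$, the map $\hat g$ is a composition of the holomorphic maps $\pi$, $G^{n_j}|_{W^u(p)}$ and $\pi^{-1}$, hence holomorphic. Its derivative at every point is therefore multiplication by a complex number, which under the identification $\R^2 \cong \C$ fixed in the introduction is exactly a conformal linear operator.

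The main obstacle, in my view, lies in the bookkeeping of the second step: choosing $\varepsilon$ and the return times $n_j$ so that the $P_j$ cover the whole Cantor set $W^u_\varepsilon(p) \cap \Lambda$, remain pairwise disjoint, and reproduce the symbolic dynamics of $\Lambda$. This is routine in the real hyperbolic setting, but has to be rechecked using the specific structure of complex horseshoes established in Section 2. Conformality itself, which might appear to be the heart of the theorem, comes essentially for free once the expanding map is in place, simply because $G$ and $\pi$ are holomorphic.
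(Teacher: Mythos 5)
Your plan is based on a false claim in step two, namely that the return iterate $G^{n_j}$ sends $P_j := W^u_\varepsilon(p) \cap S_j$ onto $W^u_\varepsilon(p)$. What is true is that $G^{n_j}$ carries $P_j$ across the Markov rectangle $R$, so the image is \emph{some} full unstable slice of $R$ lying on the global unstable manifold $W^u(p)$---but that slice is in general a different connected component of $W^u(p)\cap R$ from the one through $p$. Already in Buzzard's model $F$ (Example 2.1), with $p=(0,0)$ and $W^u_\varepsilon(p)=\{0\}\times S(0;c_0)$, the piece $P_j=\{0\}\times S(b;c_0)$ is carried by $F$ to $\{b\}\times S(0;3c_0/c_1)$, which is disjoint from $W^u_\varepsilon(p)$ whenever $b\neq 0$. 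So $\pi^{-1}\circ G^{n_j}\circ \pi$ is not a self-map of $U$, and no choice of return times $n_j$ fixes this: points of $W^u_\varepsilon(p)\cap\Lambda$ lying on stable manifolds of periodic points away from $p$ never return to $W^u_\varepsilon(p)$ at all, so a pure return-map construction would also fail to see the whole set $W^u_\varepsilon(p)\cap\Lambda$.

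What is missing is precisely the step the paper spends most of Sections 2.2--2.3 preparing: after applying the automorphism (the paper uses $G^{-1}$ on $W^s_{\text{loc}}(p_G)$; the unstable version is symmetric), one has to project the image back onto the reference transversal along the semi-invariant foliation from Theorem 2.1. That holonomy is only $C^{1+\varepsilon}$, because the foliation itself is only $C^{1+\varepsilon}$, and it is emphatically \emph{not} holomorphic, even though each individual leaf can be taken to be a holomorphic curve (Remark 2.5). Consequently the expanding map $f_i=\Pi\circ\Pi_i\circ G^{-1}\circ\Pi^{-1}$ is a genuine $C^{1+\varepsilon}$ map, not a holomorphic one, and its conformality \emph{at the Cantor set} is a nontrivial statement. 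This is exactly the content of the paper's Lemma 2.2 (the projection lemma), whose proof uses the $\lambda$-lemma together with iterating the commutation relation $\Pi_u = G^{n}\circ\Pi_u^n\circ G^{-n}$ to force the derivative of the holonomy to be a limit of products of $\C$-linear maps. Your final sentence gets the balance backwards: the ``routine bookkeeping'' of setting up the right self-map is where the foliation machinery is indispensable, and conformality, far from being free, is the technical heart of the argument.

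A correct version of your outline would replace $\hat g|_{\hat P_j}=\pi^{-1}\circ G^{n_j}\circ\pi$ by $\hat g|_{\hat P_j}=\pi^{-1}\circ\Pi^s_j\circ G\circ\pi$, where $\Pi^s_j$ is the holonomy along the stable semi-invariant foliation $\mathcal F^s_G$ from the relevant piece of $G(W^u_\varepsilon(p))$ back onto $W^u_\varepsilon(p)$, and then invoke (or reprove) the projection lemma to get conformality of $\Pi^s_j$ at the points of $\Lambda$. Once that is done, the rest of your outline (parametrizing $W^u_\varepsilon(p)$ holomorphically, using the Markov partition to get the combinatorics, and checking expansivity from hyperbolicity) is in line with the paper's argument.
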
 

In the third section we extend the \emph{recurrent compact criterion} created by Moreira and Yoccoz in \cite{my} to this type of Cantor set. Here we see the importance of the conformality of our sets. It allows us to construct \emph{limit geometries}, that are roughly speaking approximation of the asymptotic shape of small pieces of the Cantor sets. The set of all limit geometries (for a given Cantor set) is a compact set. Because of that we can prove the recurrent compact criterion: if, for some pair of Cantor sets, we can find a compact set of pairs of affine configurations of limit geometries that is carried to its own interior by renormalization operators, that is, a \emph{recurrent compact set}, then the original pair of Cantor sets, after an affine transformation on one of its entries, has stable intersection. The concepts in the previous statement are defined and explained in section 3.

The concept of stable intersection is very similar to the one in the real line setting. We say that two Cantor sets $K$ and $K'$ are close to each other when the maps defining them are close to each other and so are the connected pieces of their domains $G(a)$ and $G'(a)$ (see the second section for details). Also, we define a configuration of a piece $G(a)$ as a $C^{1+\varepsilon}$ embedding $h: G(a) \to \C$ (see section \ref{attr} for details). That way, given  a  pair of configurations ($h, \,h'$), also referred to as a configuration pair or a relative positioning, we say it has stable intersections whenever, for any pair $(\tilde{h},\tilde{h}')$ close to ($h, \,h'$) and any pair of Cantor sets $\tilde{K},\tilde{K}'$ on a small neighbourhood of $(K,\,K')$ the intersection between $\tilde{h}(\tilde{K})$ and $\tilde{h}'(\tilde{K}')$ is non-empty (see section \ref{recc} for the details). The main result of this section is the following.
\begin{theorem*b} The following properties are true:
\begin{enumerate}
    \item Every recurrent compact set is contained on an immediately recurrent compact set.
    \item Given a recurrent compact set $\mathcal{L}$ (resp. immediately recurrent) for $g$, $g'$, for any $\ti{g}$,$\ti{g}'$ in a small neighbourhood of  $(g,g') \in \Om_{\Sigma} \times \Om_{{\Sigma}'}$ we can choose points $\ti{c}_a \in \ti{G}(a) \subset \ti{K}$ and  $\ti{c}_{a'} \in \ti{G}(a') \subset \ti{K}'$ respectively close to the pre-fixed $c_a$ and $c_{a'}$ in a manner that $\mathcal{L}$ is also a recurrent compact set for $\ti{g}$ and $\ti{g}'$.
    \item Any relative configuration contained in a recurrent compact set has stable intersections.
\end{enumerate}
\end{theorem*b}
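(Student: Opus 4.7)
I would prove the three parts in order, each leveraging the previous.

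\textbf{Part (1).} By definition, every $(h,h') \in \mathcal{L}$ admits a finite pair of words $(w,w')$ such that the renormalized configuration $T_{(w,w')}(h,h')$ lies in $\mathrm{int}(\mathcal{L})$. Since renormalization operators depend continuously on their configuration argument, this property persists on an open neighbourhood of $(h,h')$. Compactness of $\mathcal{L}$ then produces a finite list of word-pairs $(w_i,w'_i)$, $i=1,\ldots,N$, whose domains of validity cover $\mathcal{L}$. I would enlarge $\mathcal{L}$ to a compact set $\mathcal{L}'$ by adjoining, for each $(h,h')$ sitting in one of these neighbourhoods and each proper initial segment of the corresponding $(w_i,w'_i)$, the associated partial renormalization. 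The resulting set $\mathcal{L}'$ is still compact (a finite union of continuous images of $\mathcal{L}$) and, by construction, every element is carried by a single renormalization step into $\mathrm{int}(\mathcal{L}) \subset \mathrm{int}(\mathcal{L}')$; thus $\mathcal{L}'$ is immediately recurrent.

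\textbf{Part (2).} The limit geometries, the pre-fixed points $c_a$, $c_{a'}$, and the renormalization operators all depend continuously on the defining maps $g$, $g'$ (this is exactly the output of the construction done in section 3). Hence for $(\ti{g},\ti{g}')$ close to $(g,g')$ there are canonically defined nearby pre-fixed points $\ti{c}_a \in \ti{G}(a)$ and $\ti{c}_{a'} \in \ti{G}(a')$, and the perturbed operators $\ti{T}_{(w,w')}$ are $C^{1+\e}$-close to the unperturbed $T_{(w,w')}$. After the reduction of part (1), recurrence amounts to the single-step open inclusion $T_{(w_i,w'_i)}(L) \subset \mathrm{int}(\mathcal{L})$ for finitely many word-pairs applied to a compact set; such an inclusion is stable under sufficiently small perturbations of the operators, yielding the conclusion.

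\textbf{Part (3).} Passing to the immediately recurrent enlargement and restricting to a perturbation $(\ti{g},\ti{g}')$ for which part (2) applies, I fix a relative configuration $(\ti{h},\ti{h}')$ close to some $(h,h') \in \mathcal{L}$. The plan is to build inductively a decreasing sequence of sub-pieces $G(a_n) \supset G(a_{n+1})$ and $G'(a'_n) \supset G'(a'_{n+1})$ together with renormalized configurations $(h_n,h'_n) \in \mathcal{L}$ such that the images of the current sub-pieces under the rescaled versions of $(\ti{h},\ti{h}')$ continue to intersect. At the inductive step, immediate recurrence supplies a word-pair sending $(h_n,h'_n)$ into $\mathrm{int}(\mathcal{L})$; the corresponding children $G(a_n w), G'(a'_n w')$ are then inspected, and the geometric overlap that is encoded in $\mathcal{L}$ forces at least one valid pair to be selectable. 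Since renormalization is a uniform expansion, the nested sub-pieces in the original scale shrink exponentially and their intersection is a single point of $\ti{h}(\ti{K}) \cap \ti{h}'(\ti{K}')$.

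The hardest step is the inductive selection in part (3): among the bounded set of admissible children of the current pair, one must produce at least one that simultaneously renormalizes back into $\mathrm{int}(\mathcal{L})$ \emph{and} witnesses a genuine overlap of sub-pieces in $\C$. The former is immediate from recurrence; the latter requires unpacking the definition of $\mathcal{L}$ to confirm that membership encodes not only combinatorial closeness of affine configurations of limit geometries, but also the actual geometric intersection datum that propagates under the renormalization dynamics. Making this bookkeeping survive the perturbation from $(g,g')$ to $(\ti{g},\ti{g}')$, uniformly in $(h,h') \in \mathcal{L}$, is the technical core of the argument.
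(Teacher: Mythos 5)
Your Part (2) essentially matches the paper's route (continuity of the limit geometries and renormalization operators under perturbation of $g,g'$, applied to finitely many single-step operators; this is Lemma \ref{continuous} together with the decomposition from Part (1)) and is correct in outline. The gaps are in Parts (1) and (3).

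In Part (1), after adjoining partial renormalizations you assert that every element of the enlarged set $\mathcal{L}'$ is carried in one step into $\mathrm{int}(\mathcal{L})\subset\mathrm{int}(\mathcal{L}')$; this is false. If $h''$ sits at a proper prefix $w_1\cdots w_k$ (with $k<n$) of its assigned word $w_1\cdots w_n$, a single renormalization step sends it to the next partial renormalization, not into $\mathrm{int}(\mathcal{L})$. Nor can you repair this by landing in $\mathrm{int}(\mathcal{L}')$: the image of a compact set $\mathcal{L}^i$ under a fixed partial renormalization may have empty interior in $\mathcal{C}$, so no open inclusion follows. The paper circumvents this by building, along each chain of prefixes, a chain of compact neighbourhoods $\mathcal{L}^{i,j_k}$ (closed balls with nonempty interior) so that each one-step renormalization maps one into the interior of the next, terminating in $\mathrm{int}(\mathcal{L})$; your $\mathcal{L}'$ needs to be thickened into such a chain, not taken as a union of images.

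In Part (3), you correctly sense that membership in $\mathcal{L}$ does not by itself encode geometric intersection, but you leave this unresolved; the selection mechanism you invoke (inspect the children and let the geometric overlap ``encoded in $\mathcal{L}$'' force a valid pair) is not supplied by the definition, because the word-pair given by recurrence need not be one whose sub-pieces actually overlap. The missing bridge is Lemma~\ref{link}: a relative configuration pair is intersecting if and only if there is a relatively compact sequence obtained by renormalizing one coordinate at a time. Since $\mathcal{L}$ is compact and recurrent, every $v\in\mathcal{L}$ admits such a sequence and is therefore intersecting. For a perturbed pair $(h\circ A\circ k^{\ul{\theta}}, h'\circ A'\circ k^{\ul{\theta}'})$, the paper applies at each step the renormalization that works for the \emph{affine approximation} of the current configuration, and uses Lemma~\ref{scale} to show the perturbation part decays exponentially, keeping the sequence inside a bounded neighbourhood of $\mathcal{L}$ (hence relatively compact, hence intersecting by Lemma~\ref{link} once more). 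Without the link lemma and this exponential damping of the perturbation parts your Part (3) does not close; a scheme that tries to maintain genuine overlap of sub-pieces at each step also needs bounded-distortion control on the ratio of scales, which you do not address.
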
 
It is important to observe that the work of Moreira and Yoccoz \cite{my} was done to solve a conjecture of Palis: for generic pairs of Cantor sets in the real line $K_1$ and $K_2$, $K_1-K_2$ contains an interval or has Lebesgue measure zero. This conjecture was inspired by the work of Palis and Takens \cite{pt}, where they proved a theorem that assured full density of hiperbolicity on a parameter family that generically unfolds a homoclinic tangency, provided that the Hausdorff dimension of the horseshoe is less than one. The recurrent compact criterion was one of the tools used by them to show that for generic pairs $K_1, \, K_2$ of Cantor sets whose sum of Hausdoff dimension is larger than one\footnote{if the sum of Hausdorff dimensions is less than one the set difference $K_2-K_1$ has Hausdorff dimension less than one and so Lebesgue measure zero.} there is a real number $t$ such that $K_1$ and $K_2 + t$ have a stable intersection, which implies in particular that $K_2-K_1$ contains an interval around $t$.

Another motivation is the work of Dujardin and Lyubich \cite{Dujardin2015}, in which they show that homoclinic tangencies are the main obstruction to $J^{*}$- stability, a concept related to the absence of bifurcation on the type of periodic points (saddle, node, repeler or indiferent). Results regarding families unfolding homoclinic tangencies are also possible with our techiniques and will appear in another paper.  The general dichotomy that the set difference of conformal Cantor sets on the complex plane generically have zero measure or contain an open set is under development in a joint work with Zamudio, who has developed the \emph{scale recurrence lemma} \cite{zam}, another important tool.

We end this paper showing that Buzzard's construction can be interpreted as a case of stable intersection of conformal Cantor sets derived from the recurrent compact criterion. We also give (non-optimal) estimatives on how \say{thick} the Cantor sets have to be. The main result of it is the following.

\begin{theorem*c}
There is $\delta$ sufficiently small for which the pair of Cantor sets $(K,K)$ defined for Buzzard's example has a recurrent compact set of affine configurations of limit geometries $\mathcal{L}$ such that $[\text{Id},\text{Id}] \in \mathcal{L}$.
\end{theorem*c}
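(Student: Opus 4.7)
The plan is to take $\mathcal{L}$ to be a small closed ball of radius $r$ around $[\text{Id},\text{Id}]$ in the space of pairs of affine configurations of limit geometries, and to verify that when $\delta$ is small enough the renormalization operators send every point of $\mathcal{L}$ strictly into its interior. Since $[\text{Id},\text{Id}]$ is the center of this ball, its membership is automatic, and the real content is the recurrence.

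First I would record the structure of Buzzard's Cantor set $K$: it is the intersection of a complex horseshoe (built from a carefully tuned polynomial automorphism) with the unstable manifold of a periodic point, parametrized as in Theorem A, and it depends on a parameter $\delta > 0$ that controls how densely the Markov pieces at each level tile a reference disk. As $\delta$ shrinks, both the number of pieces and the density of their images grow, and, crucially, the multiplicative set of return-map derivatives $\{Dg^n(x) : x \in K,\, n \ge 1\}$ samples a neighbourhood of $1$ in $\mathbb{C}^*$ with spacing comparable to $\delta$.

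Next I would describe how renormalization acts on a configuration $(A_1,A_2) \in \mathcal{L}$: given such a pair, one picks pieces $a^*$ and $a'^*$ of $K$ together with their affine limit geometries $k_{a^*}$ and $k_{a'^*}$ (which approximate the inverse branches of high iterates of $g$ on these pieces) and replaces $(A_1,A_2)$ by the normalized pair $\bigl(A_1 \circ k_{a^*},\, A_2 \circ k_{a'^*}\bigr)$ up to a common affine precomposition bringing the base points to a canonical position. Because the limit geometries approximate the genuine inverse branches with a $C^{1+\varepsilon}$ error that is summable geometrically in the depth of the piece (this is exactly what the constructions of Section 3 buy us), that error contributes at most $O(\varepsilon r)$ and is controllable independently of the combinatorial choice. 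The substantive task is then reduced to a purely combinatorial one: show that the set of derivative ratios of return maps near a common near-intersection point is $\delta$-dense enough to absorb any displacement in the ball of radius $r$.

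The heart of the argument is therefore the density estimate for renormalized derivatives. Writing a typical $(A_1, A_2) = (\text{Id} + B_1, \text{Id} + B_2)$ with $\|B_i\| \le r$, one chooses pieces $a^*,a'^*$ whose return-map derivatives $D_{a^*}, D_{a'^*}$ satisfy
\[
D_{a^*}^{-1} D_{a'^*} \;=\; (\text{Id} + B_2)(\text{Id} + B_1)^{-1} \;+\; O(\delta),
\]
which is possible precisely because Buzzard's construction makes the achievable ratios $\delta$-dense in a neighbourhood of $\text{Id}$. After the affine reduction that removes the common rescaling and translation, the renormalized pair sits at distance at most $C\delta + C'\varepsilon r$ from $[\text{Id},\text{Id}]$. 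Choosing first $r$ small enough that $C'\varepsilon r < r/2$ and then $\delta$ small enough that $C\delta < r/2 - C'\varepsilon r$ places the image strictly inside $\mathcal{L}$.

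The main obstacle will be the density statement itself. To make it rigorous I must unpack Buzzard's estimates on the number, location, and geometry of Markov pieces, translate them through the unstable-manifold parametrization of Theorem A into precise statements about the distribution of return-map derivatives as complex multipliers, and then couple them with explicit $C^{1+\varepsilon}$ bounds on the deviation of limit geometries from strict affine maps. A further delicate point is that recurrence to the \emph{interior} of $\mathcal{L}$ demands a strict quantitative gap; I would obtain it by first establishing recurrence to the closed ball with error $C\delta$ and then using continuity of the construction in its parameters to absorb an additional $\delta$-sized margin, which is exactly the role played by taking $\delta$ sufficiently small.
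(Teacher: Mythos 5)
The proposal rests on a misconception about Buzzard's construction that makes the central step fail. You claim that as $\delta \to 0$ the multiplicative set of return-map derivatives $\{Dg^n(x)\}$ becomes $\delta$-dense in a neighbourhood of $1 \in \mathbb{C}^*$, and you use this to pick pieces $a^*, a'^*$ with $D_{a^*}^{-1}D_{a'^*} \approx (\text{Id}+B_2)(\text{Id}+B_1)^{-1}$. But in Buzzard's example the inverse branches are \emph{all affine with the same real multiplier}: $g_a(z) = \frac{3}{c_1}(z-a)$ for each of the $9$ fixed letters $a \in P$, so $D f_{\ul{a}}$ is always $(\frac{c_1}{3})^n$, a discrete real geometric sequence. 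These ratios never sample a complex neighbourhood of $1$. The parameter $\delta$ controls the gap between the nine squares via $c_0 = 1-\delta$; it does not increase the number of pieces and it does not create any new multipliers. There is no density estimate of the kind you invoke, so the equation you write down cannot, in general, be solved.

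Consequently, a small ball $\mathcal{L}$ around $[\text{Id},\text{Id}]$ is the wrong choice of recurrent compact set: the renormalization operator zooms by the fixed factor $\frac{c_1}{3}$ and only offers a discrete choice of translations (the letter $a \in P$), so an arbitrary small displacement $B_i$ cannot be absorbed back inside a ball whose radius is small compared with $\frac{c_1}{3}$. The paper proceeds oppositely: after noting that the limit geometries here are literally affine (so $k^{\ul{\theta}}(z)=z-a$ and $F^{\ul{\theta}ab}(z)=\frac{c_1}{3}(z+b)$ with no error terms at all), it encodes a relative configuration by the affine map $A = \alpha z + \beta$ and measures its distance to $[\text{Id},\text{Id}]$ by the \emph{proportion of overlapping area} between $S(0;c_0)$ and $A(S(0;c_0))$. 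The recurrent compact set is then a large union $\mathcal{L}^{-1}\cup\mathcal{L}^0\cup\mathcal{L}^1$ of configurations with $\alpha$ in an annulus and with overlap proportion at least $\kappa$, and a pigeonhole argument over the $9\times 9$ sub-squares shows that one-sided or two-sided renormalizations always increase (or at least maintain) the overlap proportion while keeping $\alpha$ in the annulus, provided $c_1$ (equivalently $\delta$) is chosen appropriately. This global area-counting argument, rather than a recentering-to-identity argument, is what actually makes recurrence work, and it is essential because the multipliers offer no fine tuning.
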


We did not aim for an optimal estimative on $\delta$ because that would complicate the argument and it may be better to work with other constructions. It may also be possible to use the recurrent compact criterion to construct other families of Cantor set (considering the Buzzard's example as a family parametrized by $\delta$, the space between the pieces) that would have stable intersection with any other sufficiently general Cantor set. That could be useful when tackling the question whether automorphsisms displaying a homoclinic tangency lie in the closure of the open set of persistent tangencies, as showed by Newhouse in \cite{n_3} for the real case.  
\section{Dynamically defined conformal Cantor sets in the complex plane and their relation to horseshoes.}

In this section we define conformal (or equivalently asymptotically holomorphic) Cantor sets and establish some basic properties as well as its relation to complex horseshoes, which are important invariant hyperbolic sets of automorphisms of $\C^2$.

\subsection{Dynamically defined Conformal Cantor sets.}\label{first} 
A dynamically defined Cantor set in $\mathbbm{C}$ is given by the following data:

\begin{itemize}

\item A finite set $\mathbb{A}$ of letters and a set $B \subset \mathbb{A} \times \mathbb{A} $ of admissible pairs.

\item For each $a\in \mathbb{A}$ a compact connected set $G(a)\subset \mathbbm{C}$.
 
\item A $C^{1+\varepsilon}$ map $g: V \to \mathbbm{C}$ defined in an open neighbourhood $V$ of $\bigsqcup_{a\in \mathbb{A}}G(a)$.
\end{itemize}

This data must verify the following assumptions:

\begin{itemize}
    
\item The sets $G(a)$, $a \in \mathbb{A}$ are pairwise disjoint

\item $(a,b)\in B $ implies $G(b) \subset g(G(a))$, otherwise $G(b) \cap g(G(a)) = \emptyset$.

\item For each $a \in \mathbb{A}$ the restriction $g|_{G(a)}$ can be extended to a $C^{1+\varepsilon}$ embedding (with  $C^{1+\varepsilon}$ inverse) from an open neighborhood of $G(a)$ onto its image such that $m(Dg)>1$, where $m(A) := \displaystyle{\inf_{v\in \mathbbm{R}^2 \equiv \mathbbm{C}}\frac{||Av||}{||v||}}$, $A$ being a linear operator on $\mathbbm{R}^2$.

\item The subshift $(\Sigma, \sigma)$ induced by $B$, called the type of the Cantor set 

$$ \Sigma=\{ \ul{a}= (a_0, a_1, a_2, \dots  ) \in \mathbb{A}^{\mathbbm{N}}:(a_i,a_{i+1}) \in B, \forall i \geq 0\} \text{, }$$

$\sigma (a_0,a_1,a_2, \dots) = (a_1,a_2,a_3, \dots)$ is topologically mixing.

\end{itemize}

Once we have all this data we can define a Cantor set (i.e. a totally disconnected, perfect compact set) on the complex plane: 

\[ K=\bigcap_{n \geq 0} g^{-n}\left(  \bigsqcup_{a \in \mathbb{A}} G(a) \right) \] 

We will usually write only $K$ to represent all the data that defines a particular dynamically defined Cantor set. Of course, the compact set $K$ can be described in multiple ways as a Cantor set constructed with the objects above, but whenever we say that $K$ is a Cantor set we assume that one particular set of data as above is fixed. In this spirit, we may represent the Cantor set $K$ by the map $g$ that defines it as described above, since all the data can be inferred if we know $g$. Also, when we are working with two Cantor sets $K \text{ and }K'$ we denote all the defining data related to the second accordingly. In other words, $K'$ is given by a finite set $\mathbb{A}'$, a set $B'$ of admissible pairs, a function $g'$ defined on a neighbourhood of compact connected sets $G(a')$, etc. We use the same convention for future objects that will be defined related to Cantor sets, such as limit geometries and configurations.

\begin{definition}\label{maindef}(Conformal regular Cantor set)
We say that a regular Cantor set is \textit{conformal} whenever the map $g$ is conformal at the Cantor set $K$, that is, $\forall x \in K$, $Dg(x):\mathbbm{C} \equiv \mathbbm{R}^2 \to \mathbbm{C} \equiv \mathbbm{R}^2$ is a linear transformation that preserves angles or, equivalently, a multiplication by a complex number.

\end{definition}

There is a natural topological conjugation between the dynamical systems $(K,g|_K)$ and $(\Sigma, \sigma)$, the subshift $\Sigma$ induced by $B$. It is given by a homeomorphism $h: K \to  \Sigma$ that carries each point $x \in K$ to the sequence $\{a_n\}_{n \geq 0}$ that satisfies $g^n(x) \in G(a_n)$.

Associated to a Cantor set $K$ we define the sets 

$$\Sigma^{fin}=\{(a_0, \dots ,a_n): (a_i,a_{i+1}) \in B \: \forall i , 0 \leq i < n \},$$
$$\Sigma^-=\{(\dots, a_{-n}, a_{-n+1},\dots,a_{-1},a_0): (a_{i-1},a_i) \in B \: \forall i \leq 0\}. $$

Given $\ul{a}=(a_0, \dots, a_n)$, $\ul{b}=(b_0, \dots , b_m)$, $\ul{\theta}^1=(\dots,\theta^1_{-2},\theta^1_{-1},\theta^1_{0})$ and $\ul{\theta}^2=(\dots,\theta^2_{-2},\theta^2_{-1},\theta^2_{0})$ we denote:

\begin{itemize}
\item if $a_n=b_0$, $\ul{ab}=(a_0, \dots,a_{n}, b_1, \dots, b_m)$
\item if $\theta^1_0=a_0$, $\ul{\theta^1a} = (\dots, \theta^1_{-2},\theta^1_{-1}, a_0, \dots, a_n )$
\item if $\ul{\theta}^1 \neq \ul{\theta}^2$ and $\theta ^1 _0 = \theta ^2_ 0$,   $\ul{\theta}^1 \wedge \ul{\theta}^2=(\theta_{-j}, \theta_{-j+1}, \dots , \theta_0)$, in which $\theta _{-i} =\theta^1 _{-i}= \theta^2 _{-i} $ for all $i=0, \dots, j $ and $\theta^1_{-j-1} \neq \theta^2_{-j-1}$.
%\item if $\theta^1_0=a_n$, $\ul{\theta}^1 \wedge \ul{a}=(a_{n-j}, \dots, a_n) $, in which $a_{n-i}=\theta^1_{-i}$ for all $i=0, \dots , j$ and $j\geq n$ or  $a_{n-j-1} \neq \theta^1_{-j-1}$. 
\end{itemize}

For $\ul{a}=(a_0, a_1, \dots , a_n) \in \Sigma^{fin}$ we say that it has size $n$ and define:
$$G(\ul{a})= \{x \in \bigsqcup_{a \in \mathbb{A}} G(a) ,  g^j(x) \in G(a_j), j=0,1,\dots, n \}$$
and the function $f_{\ul{a}}: G(a_n) \to G(\ul{a})$ by:

$$ f_{\ul{a}} =  g|^{-1}_{G(a_0)} \circ g|^{-1}_{G(a_1)} \circ \dots \circ (g|^{-1}_{G(a_{n-1})})|_{G(a_n)} . $$

Notice that $f_{(a_i,a_{i+1})} = g|^{-1}_{G(a_{i})}$. 

%\begin{gather*}
%\mathcal{B} = \{ U_{K,\delta}, K \in \Om_{\Sigma}, \, \delta >0\}  \\ 
 %\text{where} U_{K,\delta} := \{K'\in \Om_{\Sigma}, \, \}
%\end{gather*}

In our definition we did not require the pieces $G(a)$ to have non-empty interior. However, if this is not the case, it is easy to see that we can choose $\delta$ sufficiently small such that the sets $G^*(a)= V_\delta(G(a))$ satisfy:

\begin{enumerate}[label=(\roman*)]
 
 \item  $G^*(a)$ is open and connected.
 
 \item $G(a) \subset G^*(a)$ and $g|_{G(a)}$ can be extended to an open neighbourhood of $\ov{G^*(a)}$, such that it is a $C^{1+\varepsilon}$ embedding (with  $C^{1+\varepsilon}$ inverse ) from this neighbourhood to its image and $m(Dg)>\mu$.
 
 \item The sets $\ov{G^*(a)}$, $a \in \mathbb{A}$ are pairwise disjoint
 
 \item $(a,b) \in B$ implies $\ov{G^*(b)}\subset g(G^*(a))$, and $(a,b) \notin B$ implies  $\ov{G^*(b)}\cap \ov{g(G^*(a))} = \emptyset$

\end{enumerate}

With this notation we have the following lemma.

\begin{lemma} \label{size}let K be a dynamically defined Cantor set $K$ and $G^*(a)$ the sets from lemma 1.  Let $G^*(\ul{a})$ be defined is the same way as $G(\ul{a})$. There exist constants $C$ and $\mu >1$ such that:

$$diam(G^*(\ul{a})) < C\mu^{-n}$$

\end{lemma}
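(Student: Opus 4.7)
The plan is to exhibit $G^*(\underline{a})$ as the image of $G^*(a_n)$ under an $n$-fold composition of contracting inverse branches of $g$, whose derivative has operator norm bounded by $\mu^{-n}$, and then translate that pointwise bound into a diameter bound.

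First I would note that by property (ii), each $g|_{G^*(a)}$ extends to a $C^{1+\varepsilon}$ embedding on an open neighborhood of $\overline{G^*(a)}$ with $m(Dg) > \mu > 1$; hence its inverse branch satisfies $\|D(g|_{G^*(a)})^{-1}\| < \mu^{-1}$ on its image. Property (iv) guarantees that whenever $(a_i,a_{i+1}) \in B$ the closed piece $\overline{G^*(a_{i+1})}$ is contained in $g(G^*(a_i))$, so these inverse branches compose cleanly on neighborhoods of the closed pieces. Defining $\tilde{f}_{\underline{a}} := (g|_{G^*(a_0)})^{-1} \circ \cdots \circ (g|_{G^*(a_{n-1})})^{-1}$, which extends $f_{\underline{a}}$ to a neighborhood of $G^*(a_n)$, one unwinds the definition of $G^*(\underline{a})$ to obtain $G^*(\underline{a}) = \tilde{f}_{\underline{a}}(G^*(a_n))$, and the chain rule yields $\|D\tilde{f}_{\underline{a}}(x)\| \leq \mu^{-n}$ throughout this neighborhood.

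To convert the derivative bound into a diameter bound I would use the finiteness of $\mathbb{A}$ together with compactness and connectedness. Because each $\overline{G^*(a)}$ is a compact connected subset of an open set on which the relevant inverse branches are defined, there exists a uniform constant $L > 0$ such that any two points of any $\overline{G^*(a)}$ can be joined by a piecewise $C^1$ path of length at most $L$ that stays inside the common extension domain of every inverse branch. Integrating $\|D\tilde{f}_{\underline{a}}\|$ along such a path joining two points of $G^*(a_n)$ yields $\mathrm{diam}(\tilde{f}_{\underline{a}}(G^*(a_n))) \leq \mu^{-n} L$, so the lemma holds with $C := L$ (slightly inflated to produce the strict inequality).

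The only delicate step is the last one: the bound $\|D\tilde{f}_{\underline{a}}\| \leq \mu^{-n}$ does not by itself control Euclidean diameters unless one has a controlled path-connected domain on which to integrate. This is precisely why the enlarged pieces $G^*(a)$ were introduced in the preceding construction: the inverse branches are defined on honest open neighborhoods of $\overline{G^*(a)}$, and because $\mathbb{A}$ is finite a uniform intrinsic diameter $L$ exists, making the path-length argument routine. Everything else is a direct consequence of the uniform expansion of $g$.
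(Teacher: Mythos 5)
Your proof is correct and is essentially the same argument the paper gives: both bound the derivative of the composed inverse branch by $\mu^{-n}$ and integrate that bound along a path of uniformly bounded length inside (a neighbourhood of) $G^*(a_n)$, the uniform length bound coming from the finiteness of $\mathbb{A}$ and the fact that each $G^*(a)$ is a $\delta$-neighbourhood of a compact connected set. The only cosmetic difference is that the paper organizes the integration as an induction on $n$ via intrinsic path metrics $d_{\ul{a}}$ on the nested pieces $G^*(\ul{a})$, whereas you compose all $n$ inverse branches at once and integrate a single time.
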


\begin{proof} The proof is essentially the same as in \cite{zam}. Let $\mu > 1$ such that $m(Dg)> \mu$ in $\sqcup_{a \in \mathrm{A}}G^*(a)$. For $\ul{a}\in \Sigma^{fin}$ let $d_{\ul{a}}$ be the metric

$$d_{\ul{a}(x,y)} = \inf_{\ul{\alpha}} l(\alpha)$$

where $\alpha$ runs through all smooth curves inside $G^*(\ul{a})$ that connect $x$ to $y$ and $l(\alpha)$ denotes the lengths of such curves. Since $g$ sends $G^*(a_0,a_1,\dots,a_n)$ diffeomorphically onto $G^*(a_1,\dots,a_n)$ and $m(Dg) > \mu$ then

$$ d_{(a_1, \dots, a_n)}(g(x),g(y)) \geq \mu \cdot d_{a_0, \dots, a_n}(x,y) .$$

for all $x,y \in G^* (a_0, \dots a_n)$. Therefore,

$$\text{diam}_{(a_0, \dots a_n)}(G^* (a_0, \dots a_n)) \leq \mu ^{-1} \cdot \text{diam}_{(a_1, \dots a_n)}(G^* (a_1, \dots a_n)) $$

where $\text{diam}_{\ul{a}}$ is the diameter with respect to $d_{\ul{a}}$. We conclude that, by induction, 

$$ \text{diam}(G^*(\ul{a}))\leq \text{diam}_{\ul{a}}(G^*(\ul{a})) \leq \mu^{-n} \cdot \text{diam}_{{a}_n}(G^*({a}_n)) .$$

Taking any $C$ larger than $\max_{a \in \mathrm{A}} \text{diam}(G^*(a))$ yields the result.
\end{proof}

As a consequence of this lemma we can see that 
\[K=\bigcap_{n \geq 0}g^{-n}\left(\bigsqcup_{a \in \mathbb{A}}G^*(a)\right)\]
since $G(\ul{a})\subset G^*(\ul{a})$ and $\text{diam}(G^*(\ul{a})) \rightarrow 0$.

In this manner, the sets $G(a)$ can be substituted by the sets $\ov{G^*(a)}$ in the definition of $K$. So in what follows, additionally to the properties in the definition of Cantor sets we suppose that $G(a)=\ov{\mathring{(G(a))}}$ and that $g$ can always be extended to a neighbourhood $V_a$ of $G(a)$ such that it is a $C^{1+\varepsilon}$ embedding (with $C^{1+\varepsilon}$ inverse) and $m(Dg) > \mu$ over $V_a$, which by lemma 2 implies that $\text{diam}(G(\ul{a}))< C \mu ^{-n}$, if $\ul{a} = (a_0, \dots a_n)$. The most important examples of conformal Cantor sets come from  intersections between compact parts of stable and unstable manifolds of periodic points and basic sets of saddle type of a automorphism of $\mathbbm{C}^2$ and, as we will see, we can construct them from sets $G(a)$ with these properties already.

Finally, we have the definition:

\theoremstyle{definition}
\begin{definition}{(The space $\Om_{\Sigma}$)} The set of all conformal regular Cantor sets $K$ with the type $\Sigma$ is defined as the set of all conformal Cantor sets described as above whose set of data includes an alphabet $\mathbb{A}$ and the set  $B$ of admissible pairs used in the construction of $\Sigma$. We denote it by $\Om_{\Sigma}$.

\end{definition} 

 This space will be seen as a topological space. The topology is generated by a basis of neighbourhoods $U_{K,\delta} \subset \Om_{\Sigma}, \, K \in \Om_{\Sigma}, \, \delta > 0  $,  the $U_{K,\delta}$ being the set of all conformal regular Cantor sets $K'$ given by $g': V' \to \C, \, V' \supset \bigsqcup_{a \in \mathbb{A}} G'(a)$ such that $G(a) \subset V_{\delta}(G'(a))$, $G'(a) \subset V_{\delta}(G(a))$ (that is, the pieces are close in the Hausdorff topology) and the restrictions of $g'$ and $g$ to $V \cap V'$  are $\delta$ close in the $C^{r}$ metric .

\subsection{Semi-invariant foliations in a neighborhood a horseshoe.}\label{2}

As pointed in the introduction, complex horseshoes are important hyperbolic invariant sets appearing in automorphisms of $\C^2$, mainly because they are present whenever there is a transversal homoclinic intersection, as shown by \cite{ob}. We now give a quick review of these concepts and explain how to construct semi-invariant foliations on the neighborhood of a complex horseshoe.

Given a difeomorphism $F: M \to M$  of class $C^k$ on a riemannian manifold $M$ we say that a compact invariant set $\Lambda \subset M$ (by invariant we mean that $F(\Lambda)=\Lambda$) is hyperbolic when there are constants $C>0$, $\lambda < 1 $, and a continuous splitting $TM| \Lambda = E^s \oplus E^u$ such that: 
\begin{itemize}
    \item it is invariant: $DF_x(E^s(x)) = E^s(F(x))$ and $DF_x(E^u(x)) = E^u(F(x))$;
    \item and for any $x \in \Lambda$,  $v^s \in E^s(x)$ and $v^u \in E^u(x)$  we have \[|DF^j(v^s)|_{f^j(x)}< C\lambda^j |v^s|_x \text{ and } |DF^{-j}(v^u)|_{f^{-j}(x)}  < C\lambda^j |v^u|_x, \, \forall j \in \N;\]
\end{itemize}
where $|\cdot|_x$ is the norm on $T_xM$ associated to the riemannian metric, which we will call $d$. The bundle $E^s$ above is called the stable subbundle and the bundle $E^u$ is called the unstable subbundle.

Hyperbolic sets are useful because we have a good control on sets of points that asymptotically converge to them. For any $\varepsilon>0$ and any point $x$ in a hyperbolic set $\Lambda$ we define the \emph{stable manifold} and the \emph{local stable manifold} by 

\[ W^s(x) =\{y \in M,\lim_{n \rightarrow +\infty}d(F^n(y),F^n(x))=0\} \]
\[ W^s_{\varepsilon}(x) =\{y \in M,d(F^n(y),F^n(x))< \varepsilon, \forall n \geq 0\}\]
respectively. It is a classical result that $W^s(x)$ is a $C^k$ immersed manifold and, if $\varepsilon$ is sufficiently small, independently of $x$, $W^s_\varepsilon(x)$ is a $C^k$ embedded disk tangent to $E^s(x)$. The same results remain true for the unstable versions of the objects above, defined by considering \emph{backwards} iterates $F^{-n}$ instead of the \emph{forwards} ones above. We denote the \emph{unstable manifold} and the \emph{local unstable manifold} by $W^u(x)$ and $W^u_\varepsilon(x)$ respectively. Also, it is important to observe that they vary continuously with $x \in \Lambda$ in the $C^k$ topology and are invariant in the sense that $F(W^{s,u}(x))=W^{s,u}(F(x))$.

In the special case that $F$ is an automorphism of $\C^2$, that is, a holomorphic diffeomorphism of $\C^2$ with holomorphic inverse, the manifolds above are complex manifolds. Also, the subbundle $E^s$ and $E^u$ are such that $E^{s,u}(x)$ is a complex linear subspace of $T_x\C^2 \equiv \C^2$.

Going back to the general setting, we say that the hyperbolic set $\Lambda$ on $M$ has a \emph{local product structure} if there exists $\varepsilon > 0$ such that, for any $x,y \in \Lambda$ sufficiently close, $W^s_{\varepsilon}(x) \cap W^u_{\varepsilon}(y)$ consists of a single point $z$ that also belongs to $\Lambda$. This structure makes the neighborhood (in $\Lambda$) of a point $x \in \Lambda$ homeomorphic to the product $ (W^s_{\varepsilon}(x) \cap \Lambda) \times (W^u_{\varepsilon}(x) \cap \Lambda) $. Also, this condition is equivalent to $\Lambda$ being locally maximal, i.e., there is an open set $U$ ($\Lambda \subset U$)  such that $\Lambda=\bigcap_{n \in \Z} F^n(U)$. We say that the hyperbolic set is \emph{transitive} when there is an $x \in \Lambda$ such that $\{F^n(x), n \in \Z\}$ is dense in $\Lambda$. A hyperbolic set with these two properties is called a \emph{basic set}.

A \emph{horseshoe} is a particular type of basic set. It has the additional properties:
\begin{itemize}
    \item it is infinite,
    \item it is of \emph{saddle-type}, that is, the bundles $E^s$ and $E^u$ are non-trivial,
    \item and it is totally disconnected.
\end{itemize}

The dynamics of $F$ over a horseshoe $\Lambda$ is conjugated to a Markov shift of finite type, similarly to the \emph{Smale horseshoe}, which is conjugated to $\{0,1\}^\Z$. The last hypothesis implies that in particular a horseshoe is a zero-dimensional set and so it is topologically a Cantor set.

Horseshoes appearing in automorphisms of $\C^2$ will be called \emph{complex horseshoes} in this paper. As pointed out in the introduction, this nomenclature does not conflict with the one used by Oberste-Vorth in \cite{ob}, that is, the horseshoes constructed there are horseshoes in our sense. Another important example, to which we will refer many times in this paper, is the one constructed by Buzzard in \cite{buzz} to study \emph{Newhouse regions} in $Aut(\C^2$). The objective of this section is to show that a complex horseshoe is, locally, close to the product of two conformal Cantor sets, as defined in the previous subsection. To do so we will first need to construct stable and unstable foliations in some neighborhood of it, which is the other objective of this subsection. 

This is done in the next theorem. It is just a small adaptation of a theorem of Pixton \cite{pix} used by Buzzard in \cite{buzz}. The only difference is that we require the foliations to be $C^{1+\varepsilon}$ instead of just $C^1$. But before stating it some remarks. For a foliation $\mathcal{F}$ we will denote the leaf though a point $p$ in its domain (which is an open set) by $\mathcal{L}(p)$ and we will denote the stable and unstable foliations by $\mathcal{F}^s$ and $\mathcal{F}^u$ respectively. In the statement we will deal only with the unstable foliation but the analogous result for the stable version can be done exchanging $G$ by $G^{-1}$ and $E^s$ by $E^u$. The norm $||DG|_{E^s,E^u}||$ is equal to $\sup_{x \in \Lambda} |DG|_{E^s(x),E^u(x)}| $ and this last norm is derived directly from the euclidian metric on $\C^2$. Finally, the whole concept of horseshoe works for local diffeomorphisms, or injective holomorphisms as below.

\begin{theorem} \label{fol}Let $U \subset \mathbbm{C^2}$. Let $\Lambda \subseteq U $ be a horseshoe for an injective holomorphism $G_0: U \to M$, with $\Lambda=\displaystyle{\bigcap_{n\in \mathbbm{Z}}G_0^n(U)}$ and let $E^s \oplus E^u $ be the associated splitting of $T_{\Lambda}\mathbbm{C^2}$.

Suppose that $||DG_0|_{E^s}||\cdot || DG_0|_{E^u}||^{-1}  \cdot|| DG_0 |_{E^s}||^{-(1+\varepsilon)} <1$.

Then, there is a compact set $L$ and $\delta$ such that for any holomorphism $G : U \to \mathbbm{C^2}$ with $||G-G_0||<\delta$ we can construct a $C^{1+\varepsilon}$ foliation $\mathcal{F}_G ^u $ defined on a open set $V\subset U$ such that:

\begin{itemize}

\item the horseshoe $\Lambda_G = \displaystyle{\bigcap_{n \in \mathbbm{Z}}G^n(U)}$ satisfies $\Lambda_G \subset \text{int } L \subset L \subset \mathcal{F}_G ^u $,
\item if $p \in \Lambda_G $ then the leaf $\mathcal{L}^u_G(p)$ agrees with $W^u_{\text{loc}}(p)$,
\item if $p\in G(U)\cap U$ then $G(\mathcal{L}^u_G (p) ) \supseteq \mathcal{L}^u_G (G(p))$,i.e., it is semi-invariant,
\item the tangent space $T_p\mathcal{L}^u_\mu(p)$ varies $C^{1+\varepsilon}$ with $p$ and continuously with $G$,
\item The association $G \to \mathcal{F}^u_G$ is continuous on the $C^{1+\varepsilon}$ topology.

\end{itemize}
\end{theorem}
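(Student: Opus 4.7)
The plan is to adapt the Pixton-Buzzard graph-transform construction to produce foliations of class $C^{1+\varepsilon}$ rather than merely $C^1$. Since $G_0$ is holomorphic, the unstable distribution $E^u$ on $\Lambda$ consists of complex lines; I first extend it to a continuous complex line field $\hat E^u$ on an open neighborhood $W$ of $\Lambda$. In local charts adapted to $\hat E^u$ and a transverse complement, candidate local unstable leaves tangent to perturbations of $\hat E^u$ are represented by complex height functions $y = \phi(x)$ over small pieces of $\hat E^u$. The graph transform $\mathcal{T}_G$ sends such a $\phi$ to the reparametrization of its $G$-image back as a graph over $\hat E^u$; over $\Lambda$ the zero section corresponds to $E^u$ itself and is fixed. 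The construction thus reduces to finding a fixed point of $\mathcal{T}_G$ in an appropriate closed $C^{1+\varepsilon}$ ball around zero.

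The main step is to show that $\mathcal{T}_G$ is a contraction in the $C^{1+\varepsilon}$ norm. Standard graph-transform bookkeeping along $\Lambda$ factors the contraction rate into the three contributions appearing in the hypothesis: a factor $||DG|_{E^s}|| \cdot ||DG|_{E^u}||^{-1}$ coming from the $C^0$ rescaling of slopes (values in the stable direction, base in the unstable direction), and an additional factor $||DG|_{E^s}||^{-(1+\varepsilon)}$ coming from the $C^{1+\varepsilon}$ distortion as the domain is reparametrized along leaves under the dynamics. Their product is precisely the quantity assumed to be less than one, so by the Banach fixed-point theorem with parameters, applied to $G$ varying in a small $C^{1+\varepsilon}$-neighborhood of $G_0$, one obtains a unique fixed point $\eta_G \in C^{1+\varepsilon}$ depending continuously on $G$ in the $C^{1+\varepsilon}$ topology. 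The resulting family of leaves assembles into the foliation $\mathcal{F}^u_G$ on any compact neighborhood $L$ of $\Lambda$ contained in $W$ over which $\mathcal{T}_G$ is defined, and $L$ can be chosen uniformly for $G$ in the prescribed neighborhood of $G_0$.

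The remaining properties then follow essentially from the construction. Integrability is not a separate issue because the leaves are produced directly as graphs; the tangent fields are $C^\varepsilon$, so the leaves are $C^{1+\varepsilon}$. On $\Lambda$, the identity $\mathcal{L}^u_G(p) = W^u_{\mathrm{loc}}(p)$ follows from uniqueness of the local unstable manifold, and the semi-invariance $G(\mathcal{L}^u_G(p)) \supseteq \mathcal{L}^u_G(G(p))$ is built into the definition of $\mathcal{T}_G$. Continuity of $G \mapsto \mathcal{F}^u_G$ in the $C^{1+\varepsilon}$ topology follows from the parametric fixed-point statement. The main obstacle is the Hölder contraction estimate in the second paragraph: one must verify carefully that the three factors in the bunching hypothesis correspond exactly to the three contributions coming from the $C^0$, $C^1$ and Hölder parts of the graph transform. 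Everything else proceeds as in Pixton's original argument.
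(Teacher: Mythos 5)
Your proposal is structurally aligned with the paper's argument: both use a graph transform / fiber-contraction scheme, and your identification of the bunching hypothesis as $k\mu^{1+\varepsilon}<1$ with $k=\|DG_0|_{E^s}\|\cdot\|DG_0|_{E^u}\|^{-1}$ the fiber contraction and $\mu=\|DG_0|_{E^s}\|^{-1}$ the base Lipschitz constant matches the paper's bookkeeping. However, there is a concrete gap: your Banach fixed-point step presupposes that the graph transform $\mathcal{T}_G$ is a self-map of a closed ball of candidate foliations over a compact neighborhood $L$ of $\Lambda$, and it is not. The issue is that $G^{-1}$ does not overflow $L$: a point $x\in L\setminus\Lambda$ leaves $U$ after finitely many backward iterates, so $\mathcal{T}_G^n\phi$ is undefined at $x$ for large $n$ and the natural iteration has no well-posed limit without supplementary boundary data. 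This is precisely why the paper cannot invoke the standard $C^r$ section theorem (Shub) off the shelf and instead states and proves an adapted $C^r$ section theorem for a non-overflowing base map $h$, in which a prescribed invariant boundary section $s_0$ on a neighborhood $N\supset U\setminus h(U)$ supplies the missing data and the graph transform is modified to $\Gamma_F(s)(x)=s(x)$ on $N'$ and $\Gamma_F(s)(x)=F\circ s\circ h^{-1}(x)$ on $h(U)$.

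The paper's proof is therefore really a two-stage argument that your sketch collapses: first Pixton's explicit recursive gluing $\mathcal{F}_n = (G(\mathcal{F}_{n-1})\cap U)\cup(\mathcal{F}_{n-1}\cap V)$, starting from a foliation $\mathcal{F}_1$ chosen to already be semi-invariant on a fundamental annular neighborhood near $W^s_G(\alpha)\setminus G(W^s_G(\alpha))$, produces a $C^0$ semi-invariant foliation with a stabilized boundary behavior; then the adapted $C^r$ section theorem, applied to the tangent plane field viewed as a section of the bundle $L(E^u_x,E^s_x)$ over $U$ with the just-built foliation supplying the invariant boundary section, upgrades the regularity to $C^{1+\varepsilon}$. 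Your ``apply Banach with parameters'' phrase does not account for where the invariant boundary data comes from, nor why the modified transform remains a contraction on the appropriate complete space (which in the paper requires the technical hypotheses $N\subset U\setminus h^2(U)$ and $\overline{U\setminus N}\cap\overline{U\setminus h(U)}=\emptyset$). To fix the proposal you would need to either reproduce Pixton's boundary construction explicitly, or replace the fixed-point step by an appeal to an adapted section theorem of the type the paper proves; the heuristic identification of the three factors in the bunching inequality, while essentially correct, does not by itself justify the existence of the fixed point.
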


The proof we are going to give contains just a brief review of the argument by Pixton and then proceed to the small changes necessary to our context. It is important to observe that since $E^s(x)$ and $E^u(x)$ are complex lines in $\C^2$ and $G$ is holomorphic, then $||DG^{-1}|_{E^s,E^u}||=||DG|_{E^s,E^u}||^{-1}$ respectively. We also need an adaptation of the $C^{r} $ section theorem, which can be found in Shub \cite{global}, to the case in which the base is not overflowing. Since we did not find a precise version of what we mean by this in the literature we state the following version bellow and give a short argument on how a proof would work.

\begin{theorem}[Adapted $C^r$ section Theorem] Let $\Pi: E \to M$ be a $C^m$ vector bundle over a manifold $M$, with an admissible metric on $E$, and $D$ be the disc bundle in $E$ of radius $C$, $C>0$ a finite constant . 

Let $h:U \subset M \to M$ be an embedding map of class $C^m$ (with a $C^m$ inverse too), $U$ a bounded open set such that $ U \not\subset h(U)  $ but $h(U) \cap U \neq \emptyset $, and $F: \left.E\right|_U \to \left.E\right|_{h(U)}$ a $C^m$ map that covers $h$. 

Let also $N \subset U$ an open neighborhood of $ U\setminus h(U)$ and $s_0 : N \to \left.D \right|_N$ a $C^r$ invariant section $\left(\ceil{r} \leq m , r\in \mathbbm{R}, m \in \mathbbm{N} \right)$ . By invariant we mean that whenever $x \in N$ and $h(x) \in N$ we have $s_0\left(h(x)\right) = F\left( s_0(x) \right)$. We also need the technical hypothesis that $N \subset U\setminus h^2(U)$ and $\overline{U\setminus N} \cap \overline{U \setminus h(U)} = \emptyset$.

In this context, suppose that there is a constant $k$, $0 \leq k < 1$ such that the restriction of  $F $ to each fiber over $x \in U$, $F_x : D_x \to D_{h(x)}$ is Lipschitz of constant at most $k$, that $h^{-1}$ is Lipschitz with constant $\mu$, that $F^{(j)}$, $s_0^{(j)}$ and $ h^{(j)}$ are bounded for $0 \leq j < \ceil{k}$, $j \in \mathbbm{Z}$,  and $k \mu^{r} <1$ . Then there is a unique invariant section $s: U \to \left.D \right|_U$ (meaning that for $x \in U$ and $h(x) \in U$ we have $s\left(h(x)\right) = F\left( s(x) \right)$) with $\left.s\right|_N = s_0$ and such a section is $ C^{r}$.

\end{theorem}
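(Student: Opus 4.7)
My plan is to adapt the standard fiber-contraction argument for the $C^r$-section theorem to this non-overflowing setting, treating $s_0$ as a prescribed boundary datum on $N$. The overall strategy has three stages: rewrite the invariance condition as a fixed-point problem for a graph transform $\mathcal{T}$ on an appropriate space of sections; prove $\mathcal{T}$ is a contraction in $C^0$; and then lift the contraction to the jet bundles to obtain $C^r$ regularity.

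Concretely, let $\mathcal{S}$ denote the complete metric space of continuous sections $s:U\to D$ satisfying $s|_N = s_0$, under the sup metric. Define
\[
(\mathcal{T}s)(x)=\begin{cases} s_0(x), & x\in N,\\ F\bigl(s(h^{-1}(x))\bigr), & x\in U\setminus N.\end{cases}
\]
This covers all of $U$ because $N\supset U\setminus h(U)$ forces $U\setminus N\subset h(U)$, and the hypothesis $\overline{U\setminus N}\cap\overline{U\setminus h(U)}=\emptyset$ provides a positive-distance collar around $\overline{U\setminus N}$ on which $h^{-1}$ is well defined and as smooth as $h$. The delicate step is continuity of $\mathcal{T}s$ at the seam $\partial N$. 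Since $N$ is open, $\partial N \subset U\setminus N$, and the disjointness hypothesis gives $\partial N \subset h(U)$. For $y\in\partial N$ and any sequence $y_n\in N$ with $y_n\to y$ eventually in $h(U)$, the hypothesis $N\subset U\setminus h^2(U)$ forces $h^{-1}(y_n)\in U\setminus h(U)\subset N$, whence $s(h^{-1}(y_n))=s_0(h^{-1}(y_n))$ and the invariance of $s_0$ yields $s_0(y_n)=F(s_0(h^{-1}(y_n)))$. Passing to the limit using continuity of $s,h^{-1},F$ gives $F(s(h^{-1}(y)))=s_0(y)$, so the two pieces of $\mathcal{T}s$ agree on a neighbourhood of $\partial N$ and $\mathcal{T}s\in\mathcal{S}$.

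The fiberwise Lipschitz bound now gives $d(\mathcal{T}s_1,\mathcal{T}s_2)\le k\,d(s_1,s_2)$, so Banach's theorem produces a unique $s\in\mathcal{S}$; by construction $s(h(x))=F(s(x))$ whenever $x$ and $h(x)$ both lie in $U$, and $s=s_0$ on $N$. For $C^r$ regularity I would lift the graph transform to the $j$-th jet bundle for each integer $j<\ceil{r}$: differentiating $s(y)=F(s(h^{-1}(y)))$ via the chain rule, the induced action on $j$-jets is again a fiber contraction whose constant is $k$ times a base factor bounded by $\|Dh^{-1}\|^j\le\mu^j$. The standing hypothesis $k\mu^r<1$, together with the stated boundedness of $F^{(j)}$, $h^{(j)}$, $s_0^{(j)}$ for $j<\ceil{r}$, makes the lifted operator contractive at every integer level and at the top Hölder level, so the unique continuous fixed point must in fact be $C^r$.

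I expect the main obstacle to lie entirely in the seam $\partial N$: both the continuity of $\mathcal{T}s$ and the propagation of $C^r$ regularity across $\partial N$ hinge on the matching argument above, and both technical hypotheses are used there. The condition $N\subset U\setminus h^2(U)$ forces $h^{-1}(N\cap h(U))\subset N$, reducing the two pieces of $\mathcal{T}s$ to the same equation $s_0(y)=F(s_0(h^{-1}(y)))$ in a neighbourhood of the seam, while $\overline{U\setminus N}\cap\overline{U\setminus h(U)}=\emptyset$ ensures the two regimes are genuinely separated so that $h^{-1}$ stays smooth and the pasted section inherits the regularity of each piece.
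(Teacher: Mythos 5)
Your proposal is correct and follows essentially the same approach as the paper: a Banach fixed-point argument for a graph transform with $s_0$ as prescribed boundary datum near $U\setminus h(U)$, followed by lifting to jet bundles and the Hölder estimate for $C^r$ regularity. The only minor variation is that you partition $U$ disjointly into $N$ and $U\setminus N$ and verify continuity across the seam $\partial N$, whereas the paper introduces an intermediate open set $N'$ with $U\setminus h(U)\subset N'\subset \overline{N'}\cap U\subset N$ and works with the overlapping open cover $\{N',\,h(U)\}$, checking the two case-definitions agree on the overlap; both variants exploit $N\subset U\setminus h^2(U)$ for the same reason, namely to force $h^{-1}$-preimages near the seam back into the region where $s_0$ is prescribed.
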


\begin{proof} The loss of the overflowing condition on $h$ and $U$ is overcome by the presence of the invariant section $s_0$. The natural graph transform would carry sections over $U$ to sections over $h(U)$ but since $s_0$ is invariant in $N \supset U\setminus h(U)$ given any section $s$ that agrees with $s_0$ in $N$ we are able to extend its graph transform from $h(U) \cap U$ back to whole open set $U$. This idea comes from Robinson \cite{robin}. Besides this, very little has to be changed or verified from the proof in Shub. The admissible hypothesis on the metric works the same way to allow us to work in the context of $E=M \times A$ and write a section as $s(x)=(x,\sigma(x))$.

Next we consider the complete metric space $\Gamma \left( U, \left.D \right|_U ; s_0 \right)$ of local sections over $U$ bounded by $C$ that agree with $ s_0$ on $N' \subset U$, an open set such that $N\supset \ov{N'} \cap U \supset N' \supset U\setminus h(U)$. Careful choice of $ N' $ allow us to use a $C^{\infty}$ function $\lambda$ on $U$ that is equal to one on $N'$ and zero outside of $N$, and thusly, taking $s=\lambda \cdot s_0$ yields a well-defined section that belongs to $\Gamma \left( U, \left.D \right|_U ; s_0 \right)$; showing that it is not empty. Then consider  $\Gamma_F: \Gamma \left( U, \left.D \right|_U ; s_0 \right) \to \Gamma \left( U, \left.D \right|_U ; s_0 \right)  $ defined by: 

\[\Gamma_F(s)(x)= \begin{cases}
                s(x)\text{, if } x \in N' .\\
                F\circ s \circ h^{-1}(x) \text{, if } x \in h(U)  . \end{cases} \]
                
Since $s$ is equal to $s_0$ over $N'$, it is invariant in this open set and the definition above is coherent. Also, because $k<1$ this transformation is a contraction, so there is a unique $s$ in $\Gamma \left( U, \left.D \right|_U ; s_0 \right)$ fixed by $\Gamma_F$. It is easy to verify that this is an invariant section over $U$ that agrees with $s_0$ on $N$. 

The verification of regularity of $s$ has some minor technical differences. First, we need to verify that if $0 \leq r < 1 $ then $s$ is $r$-Hölder in all $U$.

Since $s$ agrees with $s_0$ on $N$ it is $r$-Hölder on this set, that is, for $x, y \in N$ we have $d(\sigma(x),\sigma(y)) \leq H d(x,y)^r$. Now, $U\setminus h(U)$ and $U \setminus N$ have a positive distance $\varepsilon$ between each other and the section $s$ is bounded by $C$. So, if $x \in U\setminus h(U) $ and $y\in U\setminus N$ we have $d(\sigma(x),\sigma(y)) \leq 2C \leq H \varepsilon ^r \leq H d(x,y)^r$ for some big enough constant $H$. This allow us to write $d(\sigma(x),\sigma(y)) \leq H d(x,y)^r$ for any pair $x \in U\setminus h(U)$ and $y \in U$.

As in the book we have the estimative:
$$ \displaystyle{
d(\sigma(x),\sigma(y)) \leq k^m d(\sigma (h^{-m}(x)), \sigma (h^{-m}(y))) + \ti{H} \sum_{j=1}^{m} (\mu^{r})^j k^{j-1}(d(x,y))^{r}
} $$
whenever $h^{-j}(x) \ , h^{-j}(y) \in U, \  \forall j=0,\ 1, \ 2, \dots , \ m $ .

We are going to consider two cases:

If $x,y \in U$ are such that  $h^{-j}(x) \ , h^{-j}(y) \in U, \  \forall j \in \mathbbm{N} $, we let  $m \rightarrow \infty$  in the inequality above, and, since $k \mu^{r} <1$ and $\sigma $ is bounded by $C$, the right hand side converges to $\ti{H} \cdot \ \ti{C} d(x,y)^r$. 

If else, there is a finite maximal $m$, such that $h^{-j}(x) \, h^{-j}(y) \in U, \,  \forall j=0,1,2, \dots, m  $. In this case, we can assume without loss of generality that $h^{-m}(x) \in U \setminus h(U)$. But then, using again the estimative above we have:
\begin{align*}d(\sigma(x),\sigma(y)) & \leq k^m d(\sigma (h^{-m}(x)), \sigma (h^{-m}(y))) + \ti{H} \sum_{j=1}^{m} (\mu^{r})^j k^{j-1}(d(x,y))^{r} \leq
\\ & k^m\cdot H\cdot d((h^{-m}(x),(h^{-m}(y))^r+\ti{H} \cdot \ti{C} d(x,y)^r \leq H \cdot k^m \cdot \mu ^{mr} d(x,y)^r + \ti{H} \cdot \ \ti{C} d(x,y)^r
\end{align*}
and again since $k\mu^r < 1$ we finally have $d(\sigma(x),\sigma(y)) \leq (H+\ti{H} \cdot \ \ti{C}) d(x,y)^r $ for any $x,y \in U$.

The smoothness is proved with the same argument as in the book adapted in some way as above. Using the same induction idea one can do as follows.

Consider $\ti{D}$ the disc bundle of radius $\ti{C}$ in the fiber bundle over $M$ with each fiber being equal to $L(T_xM, A)$,  $\ti{C}$ being chosen so that $||\partial{s}|| < \ti{C}$. This is admissible. 
Then the complete metric space $\Gamma(U,\left.\ti{D} \right|_U ; \partial s_0)$. of local sections that agree with $\partial s_0$ on $N'$, $\partial s$ is obtained by differentiating
$s$ on $N$ and identifying the tangent plane to $(x,\sigma_0(x))$ with the graph of a linear transformation in $L(T_xM, A)$.
The graph transform $\gamma_{DF}(\tau)$ is defined by: 

\[\gamma_{DF}(\tau)(x)= \begin{cases}
				\partial s(x), \text{ if } x \in N',\\
				\Gamma_{DF} \circ s \circ h^{-1}(x), \text{ if } x \in h(U)   ,
			\end{cases}\]

where $\Gamma_{DF(L)}:= (\Pi_2 DF_{x,\sigma(x)})\circ (Id,L) \circ Dh^{-1}_{h(x)} $, for any $L$ a linear transformation in $L(T_xM, A)$,  is a fiber contraction of constant $k\mu <1$. 

To show that the invariant section $\ti{\tau}$ is indeed the tangent to $(x,\sigma(x)), x \in U$ we have to divide in cases as above:

If $x \in  \bigcup_{n \in \mathbbm{N}} h^n(N')$ then it is true by definition of $ \partial s$ and the fact that $\ti{\tau} $ is invariant and equal to $\partial s_0$ on $N'$ (remember that $s_0$ is $C^r$)

If not, then for any $n \in \mathbbm{N}$ there is $\delta$ small enough such that if $d(x,y) < \delta$ then $h^{-j}(x), h^{-j}(y) \in U$ for $j=0,1,2, \dots, n$. This comes from the fact that $x \in \bigcap_{i \in \mathbbm{N}}h^i (U)$ and $h^n(U)$ is an open set around set $x$ . This is enough to show, by the same iteration argument, that $\text{Lip}_0(\sigma(x+y), \sigma(x) + \ti{\tau}(x)(y))=0$, which completes the proof.

\end{proof}

\begin{remark} Observe that, from the argument above,  if we just want to obtain an invariant section that is continuous we can just make $m=r=0$ and consider just the case in which $M$ is a topological space rather than a manifold.

\end{remark}

\begin{remark}\label{depen}

If we add the additional hypothesis that the maps $h \text{, } F \text{, } s_0$ and all their derivatives are uniformly continuous, the proof above also shows that the invariant section varies continuously with the maps involved. More specifically, fixing $h, F, s_0$ and choosing  any $h',F',s'_0$ such that $h \text{ and  } h'$, and their inverses are $C^m$ close;  $F \text{ and } F'$ are $C^m$ close (and $F'$ covers $h'$); $s_0 \text{ and } s'_0$ are invariant (by $F \text{ and } F' $ respectively) and $C^r$ close; and $k \mu ^r < 1$ then $s$ and $s'$ are both close in the $C^ {r}$ topology. The proof is essentially the same as above and the details are left to the reader.

\end{remark}

We now proceed to the proof of Theorem \ref{fol} 

\begin{proof} (\textbf{Theorem \ref{fol}}) The work of Pixton shows that we can construct a non necessarily smooth $\mathcal{F}^u_G$ for any $G$ with the desired properties as above. The idea is described as follows.

We begin by constructing a transversal (not necessarily semi-invariant) foliation $\mathcal{F}_0$ to $W^s_G(\alpha)$ that covers an open set around $W^s_G(\alpha)$. Here $W^s_G(\alpha)$ denotes the union of all $W^s_{G,\alpha}(p)=W^s_G(p)\cap B_\alpha(p)$ for some sufficiently small $\alpha$. This can be done locally and, in the case that $W^s_G(\alpha)$ is a zero dimensional transversal lamination, which is our case, it is possible to glue these constructions together by bump functions (check the original for details). We can restrict $\mathcal{F}_0$ to small neighbourhood $V$ of  $\ov{W^s_G(\alpha)\setminus G(W^s_G(\alpha))}$, in such a way that $G(V)\cap V = V'$ does not intersect $G^{-1}(V)\cap V = G^{-1}(V')$. We consider a new foliation $\mathcal{F}'_0$ on $V' \cup G^{-1}(V')$ defined being the same as $\mathcal{F}_0$ over $G^{-1}(V')$ and being equal to $G(\mathcal{F}_0)$ over $V'$. We can then, considering again that $W^s_G(\alpha)$ is transversely zero dimensional, construct a transversal foliation $\mathcal{F}_1$ to it that agrees with $\mathcal{F}'_0$ on $V \cup G^{-1}(V')$. Now we define recursively $\mathcal{F}_n = (G(\mathcal{F}_{n-1}) \cap U) \cup ((\mathcal{F}_{n-1} \cap V)  $, which is possible because of the semi-invariance of $\mathcal{F}_1$. Notice that for any point $x \in U \setminus \bigcap _{n \in \mathbbm{N}} G^{n}(U) $ for any integer $n$ bigger than a integer $n_x$ the leaf $\mathcal{L}_n(x)$ of $\mathcal{F}_n$ at $x$ is the same so we can safely define in $U \setminus \bigcap _{n \in \mathbbm{N}} G^{n}(U)$ the limit foliation $\mathcal{F}$. Finally, adding the submanifolds $W_u (x), x \in \Lambda $, yields a invariant foliation in an open subset of $U$ (also see \cite{mpps} for the idea of fundamental neighbourhood). Notice that we can chose $L \text{ and } \delta $ small enough such that the items above are satisfied for any $G\text{, }||G-G_0|| < \delta$.

We can use the $C^r$ section Theorem to show that this foliation is indeed $C^{1+\varepsilon}$. Begin by extending the fibrate decomposition $E_{G_0}=E=E^s \oplus E^u$ over $\Lambda$ to a close $C^2$ decomposition  $E=E^s \oplus E^u$ over $U$ such that the action of the derivative map $TG_x: =E_x^s \oplus E_x^u \to E_{G(x)}^s \oplus E_{G(x)}^u$ can be written as a block matrix:

$$\begin{bmatrix}
    A_x & B_x \\
    C_x & D_x 
\end{bmatrix}$$

in which $||A_x|| <||DG_0|_{E^s}||+\delta '$, $||D_x|| > ||DG_0|_{E^u}||-\delta '$, and $||B_x||, ||C_x|| < \delta '$ for some small $\delta' $ uniformly on  $U$. Also, by possibly shrinking $U$ we may assume that the tangent directions to $\mathcal{F}$ can be written as the graph of a linear map from  $E_x^u$ to $E_x^s$  (bounded uniformly on $U$). Considering the $C^2$ bundle whose fibers are $L(E_x^u,E_x^s)$ we can consider the covering map :

$$\Gamma_{DF}(x)(L) )= [B_x + A_x L)][D_x + C_x L]^{-1} $$

in which $L\in L(E_x^u,E_x^s) $. By making $\delta \text{ and } \delta'$ sufficiently small we have that $\Gamma_{DF}$ is a fiber contraction of constant at most $ ||DG_0|_{E^s}|| \cdot ||DG_0^{-1}|_{E^u}|| + \delta''$.  The Lipschitz constant of the base map $G^{-1}$ is at most $ ||DG_0|_{E^s}||^{-1}  + \delta'''$, and so there is an $r>0$ such that \[(||DG_0|_{E^s}|| \cdot ||DG_0^{-1}|_{E^u}|| + \delta'') \cdot (||DG_0|_{E^s}||^{-1}  + \delta''')^r <1.\] This is enough to show that the section $x,T_x(\mathcal{F}\cup W^u)$ is the unique invariant section of the $C^r$ section Theorem that agrees with $\mathcal{F}$ on $N$, and so it is $C^{1+\varepsilon}$. By the same argument on the Fröbenius theorem we can express the foliation $\mathcal{F}$ locally through a finite number of $C^1$ charts and the fact that the section above is $C^{1+\varepsilon}$ allows us to show that these charts are actually $C^{1+\varepsilon}$. The continuity in the $C^{1 + \varepsilon}$ topology comes immediately from the construction and previous observations, we only require $\mathcal{F}_0$ and its derivatives to be uniformly continuous on $V$ which is clearly possible to be done.

\end{proof}

\begin{remark} Whenever we have a hyperbolic set $\Lambda$ there is an adapted metric $||\cdot||'$ such that the constant $C$ in the definition of hyperbolic set is equal to $1$. In this metric the condition $||DG_0|_{E^s}||\cdot || DG_0|_{E^u}||^{-1}  \cdot|| DG_0 |_{E^s}||^{-(1+\varepsilon)} <1$ will be automatically satisfied, for some $\varepsilon$ sufficiently small, with $||\cdot||'$ instead of $||\cdot||$. Since such metrics are uniformly equivalent in a compact set containing both the foliations above it follows that close to a complex horseshoe we can always construct stable and unstable foliations with the properties listed on \textbf{Theorem \ref{fol}}.

\end{remark}  

\begin{corollary}
With the hypothesis $||DG_0|_{E^s}||\cdot|| DG_0 |_{E^u}|| <1$ the last theorem guarantees the existence of a $C^2$ foliation $\mathcal{F}^u_G$ for any $G$ sufficiently close to $G_0$.
\end{corollary}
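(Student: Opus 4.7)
The plan is to apply Theorem \ref{fol} directly with the choice $\varepsilon = 1$, which promotes its conclusion of $C^{1+\varepsilon}$ regularity into $C^2$. With that single specialization, the five conclusions of Theorem \ref{fol} — existence on a common neighborhood of $\Lambda_G$, coincidence with $W^u_{\mathrm{loc}}$ on the horseshoe, semi-invariance under $G$, $C^{1+\varepsilon}$ variation of the tangent distribution, and continuous dependence on $G$ in the $C^{1+\varepsilon}$ topology — transfer to the corollary verbatim. The entire proof therefore reduces to showing that the corollary's hypothesis $||DG_0|_{E^s}||\cdot||DG_0|_{E^u}||<1$ implies the bunching inequality
\[
||DG_0|_{E^s}||\cdot ||DG_0|_{E^u}||^{-1}\cdot||DG_0|_{E^s}||^{-(1+\varepsilon)} < 1
\]
of the theorem at the value $\varepsilon = 1$.

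To carry out that reduction I would first invoke the holomorphy identity $||DG_0^{-1}|_{E^u}|| = ||DG_0|_{E^u}||^{-1}$ recorded in the paragraph after Theorem \ref{fol}, which is available because $E^u$ is a complex line and $G_0$ is biholomorphic. I would then pass to an adapted hyperbolic metric, as in the remark preceding the corollary, so that the constant $C$ in the definition of hyperbolicity is $1$ and no multiplicative slack separates the derivative norms $||DG_0|_{E^s}||$ and $||DG_0|_{E^u}||$ from the genuine contraction and expansion rates on $\Lambda$. In that metric the bunching inequality at $\varepsilon = 1$ collapses to a single scalar comparison between those two rates, and the corollary's hypothesis is exactly what is needed to make the comparison hold.

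Once the bunching condition is secured for $\varepsilon = 1$, I would simply cite Theorem \ref{fol}: for every $G$ in a sufficiently small $C^{1+\varepsilon}$ neighborhood of $G_0$ it produces a foliation $\mathcal{F}^u_G$ on a fixed open neighborhood of $\Lambda_G$, semi-invariant under $G$, agreeing with $W^u_{\mathrm{loc}}(p)$ for each $p\in\Lambda_G$, depending continuously on $G$, and now of class $C^2$. The main — and in fact essentially only — technical point is the algebraic reduction in the preceding paragraph, which is why I would take the short detour through the adapted metric rather than try to force the bunching inequality in the ambient Euclidean one.
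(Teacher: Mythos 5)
The algebraic reduction at the heart of your argument does not go through. Writing $a = ||DG_0|_{E^s}||$ and $b = ||DG_0|_{E^u}||$, the hypothesis of Theorem \ref{fol} is
\[
a \cdot b^{-1} \cdot a^{-(1+\varepsilon)} = a^{-\varepsilon}\, b^{-1} < 1,
\]
which at $\varepsilon = 1$ simplifies to $(ab)^{-1} < 1$, i.e.\ $ab > 1$. The corollary assumes $ab < 1$, which is the \emph{opposite} inequality; the two conditions are mutually exclusive, and passing to an adapted metric cannot flip the sign of the comparison (it only disposes of the multiplicative constant $C$). So the implication you assert --- that $||DG_0|_{E^s}||\cdot ||DG_0|_{E^u}|| < 1$ supplies the theorem's bunching inequality at $\varepsilon = 1$ --- is false, and the rest of the proposal collapses with it. You never actually carried out the ``single scalar comparison'' you promised; had you done so, the sign problem would have been visible immediately.

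What the hypothesis $ab < 1$ \emph{does} give, by the same arithmetic, is the bunching inequality at $\varepsilon = 1$ for the \emph{stable} foliation. The paragraph preceding Theorem \ref{fol} states that the stable version is obtained by exchanging $G$ with $G^{-1}$ and $E^s$ with $E^u$; performing that swap in the bunching inequality and applying $||DG_0^{-1}|_{E^{s}}|| = ||DG_0|_{E^{s}}||^{-1}$, $||DG_0^{-1}|_{E^{u}}|| = ||DG_0|_{E^{u}}||^{-1}$ turns it into $a \cdot b^{\varepsilon} < 1$, which at $\varepsilon = 1$ is exactly $ab < 1$. This reading is also the only one consistent with the remark immediately after the corollary that the hypothesis is natural in the dissipative context, where the Jacobian --- hence $ab$ --- is less than one. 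So the correct route is to cite the \emph{stable} version of Theorem \ref{fol}, after observing that the printed $\mathcal{F}^u_G$ in the corollary is evidently a slip for $\mathcal{F}^s_G$, rather than trying to force the unstable bunching inequality, which the corollary's hypothesis outright contradicts.
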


This could be the case in the dissipative context, specially in the case of horseshoes arising from transversal homoclinic intersections.

\begin{remark} \label{hololeave} Each leaf of the foliation obtained in theorem \ref{fol} can be chosen to be a holomorphic curve. This only depends on being able to consider the foliation $\mathcal{F}^1$ consisting of leaves that are holomorphic curves. The local construction of $\mathcal{F}^1$ in \cite{pix} involves only an isotopy and a bump function applied to create disk families along compact (and possibly very small) parts of $W^s$. Checking the details in the original, we observe that such construction can be done in a way that make those disk families be holomorphic embedded curves. This is mentioned in \cite{buzz}; see the appendix of \cite{buzzthesis} for further details.

\end{remark}

\subsection{Conformal Cantor sets locally describe horseshoes.}\label{3}

To end this section we show that a horseshoe is, locally, close to the product of two conformal Cantor sets. Having in mind the local product structure this fact is a consequence of the following theorem.

\begin{theorem*a} \label{h=c} Let $\Lambda$ be a complex horseshoe for a automorphism $G \in Aut(\C^2)$ and $p$ be a periodic point $\Lambda$. Then, if $\varepsilon$ is sufficiently small, there is a parametrization  $\pi: U \subset \C \to W^u_\varepsilon(p)$ such that $\pi^{-1}(W^u_\varepsilon(p) \cap \Lambda)$ is a conformal Cantor set in the complex plane.

\end{theorem*a}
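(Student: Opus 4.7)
The plan hinges on the observation that, for $G \in Aut(\C^{2})$, the local unstable manifold $W^{u}_{\varepsilon}(p)$ is an embedded complex analytic $1$-disk in $\C^{2}$; for $\varepsilon$ small it is simply connected, and so biholomorphic to an open disk $U \subset \C$. I would take such a biholomorphism $\pi: U \to W^{u}_{\varepsilon}(p)$ as the parametrization in the statement. The decisive consequence is that any dynamics on $W^{u}_{\varepsilon}(p)$ inherited from holomorphic maps of $\C^{2}$ pulls back through $\pi$ to a holomorphic map on $U$, whose derivative is automatically multiplication by a complex number at each point. Thus conformality of $Dg$ at every point of $K$ will be free once a dynamically defined Cantor-set structure built from holomorphic ingredients is in place.

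To build that structure I would read the combinatorial data off a Markov partition $\{R_{b}\}_{b\in\mathbb{B}}$ of $\Lambda$ with $p \in R_{b^{*}}$, using the period $\tau$ of $p$. Each rectangle $R_{b}$ is thickened to a holomorphic ``box'' $\tilde{R}_{b}$ by saturating along the $C^{1+\varepsilon}$ stable and unstable foliations provided by Theorem \ref{fol} (and its stable analogue); by Remark \ref{hololeave} the leaves can be taken to be holomorphic, so each $\tilde{R}_{b}$ is a holomorphic product of two complex disks. The alphabet $\mathbb{A}$ is then a finite collection of admissible words over $\mathbb{B}$ adapted to $p$ (an appropriate family of cylinders, chosen so that the induced shift on $\mathbb{A}$ is conjugate to the natural shift of the horseshoe restricted to $W^{u}_{\varepsilon}(p) \cap \Lambda$); the piece $G(a) \subset U$ is $\pi^{-1}$ applied to the intersection of $W^{u}_{\varepsilon}(p)$ with the holomorphic cylinder $\bigcap_{i} G^{-i}(\tilde{R}_{a_{i}})$; the map $g$ is defined on a small neighbourhood $V_{a}$ of each $G(a)$ by a formula of the form $g|_{V_{a}} = \pi^{-1} \circ G^{n_{a}} \circ \pi$, with $n_{a}$ the iterate that brings $G(a)$ back into $W^{u}_{\varepsilon}(p)$; and the admissible pairs $(a,b) \in B$ are dictated by admissible concatenation in the underlying shift and are exactly those for which $G(b) \subset g(G(a))$.

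Verifying the axioms of a conformal Cantor set is then essentially routine. Pairwise disjointness of the $G(a)$'s follows from disjointness of the $R_{b}$'s for small enough thickenings; the expansion bound $m(Dg) > 1$ comes from hyperbolicity, since in an adapted metric $\|DG^{n_{a}}|_{E^{u}}\| > 1$, and continuity plus smallness of $\varepsilon$ propagate this bound through the bounded biholomorphism $\pi$ to a uniform lower bound on $V$; the Markov property gives the admissibility equivalence together with the ``otherwise disjoint'' condition; and topological mixing of the induced subshift on $\mathbb{A}$ follows from mixing of the original horseshoe shift (possibly after passing to a finite power). Conformality is immediate because each $g|_{V_{a}} = \pi^{-1} \circ G^{n_{a}} \circ \pi$ is holomorphic on $V_{a}$ (a composition of a biholomorphism, a holomorphic map of $\C^{2}$, and a biholomorphism), so $Dg$ is multiplication by a complex number everywhere on $V$, in particular on $K$. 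The main obstacle---and the reason to bring in the foliations of Theorem \ref{fol}---is showing that each piece $G(a)$ is \emph{compact and connected}: writing $G^{-i}(\tilde{R}_{a_{i}}) \cap W^{u}_{\varepsilon}(p) = G^{-i}\bigl(\tilde{R}_{a_{i}} \cap G^{i}(W^{u}_{\varepsilon}(p))\bigr)$, one needs each holomorphic arc $G^{i}(W^{u}_{\varepsilon}(p))$ to meet the box $\tilde{R}_{a_{i}}$ in a single connected component, which can be arranged by taking the Markov rectangles small enough that no unstable leaf crosses any $\tilde{R}_{b}$ twice.
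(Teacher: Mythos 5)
There is a genuine gap, and it is the central one. The proposed formula $g|_{V_a} = \pi^{-1}\circ G^{n_a}\circ\pi$ is not well-defined. Take $p$ fixed for simplicity; since $G$ expands along $E^u$, the image $G^{n}(\pi(G(a)))$ is a \emph{longer} arc of the global unstable manifold $W^u(p)$, and after enough iterates it leaves $W^u_\varepsilon(p)$. When it comes back near $\Lambda$ it lies on some other local leaf $W^u_{\mathrm{loc}}(q)$, $q\in\Lambda$, $q\neq p$, not on $W^u_\varepsilon(p)$ itself. Thus there is no iterate $n_a$ for which $G^{n_a}(\pi(G(a)))\subset W^u_\varepsilon(p)$, so $\pi^{-1}$ cannot be applied to the image. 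The correct construction (and the one the paper uses) applies $G$ once (or $G^{-1}$ in the stable formulation of the paper's proof) and then composes with the \emph{holonomy} of the stable foliation to project the image back onto the fixed local unstable manifold: schematically $g = \Pi^s\circ G$, where $\Pi^s$ is the projection along stable leaves. This is the actual reason Theorem~\ref{fol} is needed here; you misidentified its role as being only about connectedness of the pieces $G(a)$, which is a secondary matter.

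This gap propagates: because the true $g$ contains the stable holonomy, it is \emph{not} a composition of biholomorphisms and a holomorphic map. The holonomy of the semi-invariant foliation is only $C^{1+\varepsilon}$, and even with holomorphic leaves (Remark~\ref{hololeave}) it is not a holomorphic map of transversals. So conformality of $Dg$ on $K$ is not automatic; it requires the separate argument of Lemma~\ref{projections}, which uses the $\lambda$-lemma and the fact that tangent spaces at points of $\Lambda$ are complex lines to show that the holonomy, though only $C^{1+\varepsilon}$ globally, is conformal at the relevant points of the Cantor set. Without this step the conclusion that $Dg$ is conformal on $K$ is unsupported.
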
 

Of course an analogous version is true for the stable manifold. The main ingredient is the following lemma.

\begin{lemma}\label{projections}
Let $\Lambda_G$ be a complex horseshoe for a automorphism $G \in Aut(\C^2)$ together with its unstable foliation $\mathcal{F}^u_G$. Additionally, let $N_1$ and $N_2$ be two $C^{1+\varepsilon}$ transversal sections to $\mathcal{F}^u_G$. Suppose that for some point periodic point $p \in \Lambda_G$, the tangent planes of $N_1$ and $N_2$ to the points of intersection $N_1 \cap W^u_G(p) =q_1$ and respectively $N_2 \cap W^u_G(p)=q_2$ are complex lines of $\C^2$. The the projection along unstable leaves $\Pi_u : N_1 \to N_2$ is a $C^{1+\varepsilon}$ map conformal at $q_1$.
\end{lemma}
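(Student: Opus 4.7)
\textit{Plan.} The $C^{1+\varepsilon}$ regularity of $\Pi_u$ follows directly from the $C^{1+\varepsilon}$ regularity of the foliation $\mathcal{F}^u_G$ (Theorem \ref{fol}) via local foliation charts. The substantive content is conformality of $D\Pi_u$ at $q_1$, and the strategy is to exploit the periodic orbit at $p$: by iterating the return map $\varphi := G^n$ (with $n$ the period of $p$) backwards, $q_1$ is pushed along $W^u(p)$ arbitrarily close to $p$, where the foliation is $C^\varepsilon$-close to the constant holomorphic foliation by translates of the complex line $E^u(p)$.

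\textit{Pulling back to near $p$.} Set $q_i^{(j)} := \varphi^{-j}(q_i)$ and $N_i^{(j)} := \varphi^{-j}(N_i)$. Semi-invariance of $\mathcal{F}^u_G$ (Theorem \ref{fol}) yields $\Pi_u \circ \varphi^j = \varphi^j \circ \Pi_u^{(j)}$, where $\Pi_u^{(j)}: N_1^{(j)} \to N_2^{(j)}$ is the leafwise projection between the pulled-back transversals. Since $\varphi$ is holomorphic, $D\varphi^{\pm j}$ is $\C$-linear, so conjugation by $D\varphi^{\pm j}$ preserves the decomposition of real-linear maps between complex lines into their $\C$-linear and $\C$-antilinear parts. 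Denoting the antilinear part by $[\,\cdot\,]_a$, differentiation yields
\[
[D\Pi_u(q_1)]_a = D\varphi^{j}(q_2^{(j)}) \circ [D\Pi_u^{(j)}(q_1^{(j)})]_a \circ D\varphi^{-j}(q_1).
\]
Since $q_i \in W^u(p)$, one has $\|q_i^{(j)} - p\| \lesssim |\tau|^{-j}$, where $\tau$ is the unstable eigenvalue of $D\varphi(p)$, $|\tau|>1$.

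\textit{Hölder decay of the antilinear part and conclusion.} Because the leaves of $\mathcal{F}^u_G$ are holomorphic curves (Remark \ref{hololeave}), its tangent plane field takes values in $\mathbb{CP}^1$, is $C^\varepsilon$, and equals $E^u(p)$ at $p$; hence near $p$ the foliation is a $C^\varepsilon$-perturbation of the constant holomorphic foliation by translates of $E^u(p)$. The leafwise projection along that constant foliation between two complex transversals is $\C$-linear, so comparing holonomies gives
\[
\bigl\|[D\Pi_u^{(j)}(q_1^{(j)})]_a\bigr\| \leq C\,\|q_1^{(j)} - p\|^\varepsilon \leq C\,|\tau|^{-j\varepsilon}.
\]
On the other hand, as $j\to\infty$ the directions $T_{q_i^{(j)}}N_i^{(j)}$ become dominated by their $E^s$-component, so the norms of $D\varphi^{\pm j}$ restricted to these complex lines scale like $|\sigma|^{\pm j}$ (with $\sigma$ the stable eigenvalue), and their product is bounded independently of $j$. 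Plugging into the formula from the previous paragraph, $\|[D\Pi_u(q_1)]_a\| \leq C'|\tau|^{-j\varepsilon}$ for every $j\geq 0$; the left side does not depend on $j$, so it must vanish and $D\Pi_u(q_1)$ is $\C$-linear. The main technical obstacle is the Hölder estimate on $[D\Pi_u^{(j)}(q_1^{(j)})]_a$: holomorphy of the leaves already removes all in-leaf non-conformality, but one must still carefully quantify the antilinear contribution arising from the $C^\varepsilon$ variation of the tangent plane field transverse to $E^u(p)$, for instance by writing the foliation as graphs of holomorphic functions $z \mapsto \phi(z,t)$ depending $C^\varepsilon$-Hölder on the transverse parameter $t$.
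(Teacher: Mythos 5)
Your plan follows the same backbone as the paper's proof: pull $\Pi_u$ back along high backward iterates so the two transversals sit near $p$, conjugate $D\Pi_u$ by the ($\C$-linear, hence conformal) restrictions of $D\varphi^{\pm j}$ to the relevant complex tangent lines, and show that the inner holonomy $D\Pi_u^{(j)}$ becomes nearly conformal as $j\to\infty$; your argument for boundedness of the product of the two conformal factors via domination by $E^s$ is also the standard one. The genuine departure is in how the inner holonomy is controlled. The paper appeals to the $\lambda$-lemma: after restricting to subsets $N_i^n\subset N_i$, the pulled-back transversals $G^{-n}(N_1^n)$ and $G^{-n}(N_2^n)$ both $C^1$-converge to a compact piece of $W^s(p)$, so in a $C^{1+\varepsilon}$ straightening chart of the foliation the holonomy between these two now $C^1$-close transversals has derivative $\delta$-close to $\text{Id}$. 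You instead estimate $[D\Pi_u^{(j)}]_a$ directly from the $C^\varepsilon$ regularity of the foliation, the holomorphy of the leaves (Remark \ref{hololeave}), and the convergence $q_i^{(j)}\to p$ along $W^u(p)$. Both routes work; the paper's is slightly more economical (it never actually needs the leaves to be holomorphic, only the $C^{1+\varepsilon}$ regularity plus the $\lambda$-lemma), whereas yours makes the vanishing of the antilinear part explicit and gives a rate. Two cautions on the step you flag as the obstacle: the claim that the tangent plane field is $C^\varepsilon$-close to $E^u(p)$ controls $\partial_z\phi$, but the holonomy derivative is governed by the transverse partial $\partial_t\phi$, which is a different object — what actually carries the estimate is that $\partial_t\phi$ is a $C^\varepsilon$ first partial of the $C^{1+\varepsilon}$ graph family and equals $\text{Id}$ at $p$ once leaves are parametrized by their intersection with a complex transversal such as $W^s_{\text{loc}}(p)$, giving $\|\partial_t\phi_0(z)-\text{Id}\|\le C|z|^\varepsilon$; and you need, as the paper arranges explicitly with the $N_i^n$, that the pulled-back tangent lines $T_{q_i^{(j)}}N_i^{(j)}$ stay uniformly transverse to $W^u_G$ so that the conformal projections used to compare $D\Pi_u^{(j)}$ with the graph holonomy are uniformly bounded, and the decay should carry $\max(\|q_1^{(j)}-p\|,\|q_2^{(j)}-p\|)^\varepsilon$ rather than only the first distance. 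With these clarifications the plan is correct.
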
 

\begin{proof}
Observe that, since $p \in \Lambda_G$ every backwards iterate of the segment in $W^u_G(p)$ that connects $q_1$ and $q_2$ stays on the domain of the foliation. So, for every $n \in \N$ we can define a restriction $N_i^n \subset N_i, \, i=1,\,2.$ such that $G^{-n}(N^n_i)$ is also on the domain of the foliation. Furthermore, this restriction can be done in such manner that, since $p$ is periodic, we have, by the $\lambda$-lemma, that $G^{-n}(N^n_1)$ and $G^{-n}(N^n_2)$ are $\delta$ close to each other on the $C^1$ metric, for every $n > n_\delta$. Also, we can assume that their tangent directions at $q^n_i=G^{-n}(q_i)$ are bounded away from $T_{q^n_i}W^u_G$, $i=1,\,2.$ Let $\Pi^n_u : N^n_1 \to N^n_2$ be the projection along the unstable foliation.

Looking at a small open set $\ti{U}$, we can find a $C^{1+\varepsilon}$ map $f:\ti{U} \to \mathbbm{D}\times\mathbbm{D}$ such that the unstable leaves are taken into the horizontal levels $\mathbbm{D} \times \{z\} \text{, } z \in \mathbbm{D}$ and represent $N^n_1$ and $N^n_2$ as graphs $(h_1(z), z)$ and $(h_2(z),z)$ of $C^{1+\varepsilon}$ embeddings $h_1 \text{ and } h_2$ with domain being a small disk $\mathbbm{D_\varepsilon}$ too. Under this identification, $\Pi^n_u$ is a $C^{1+\varepsilon}$ map that carries  $(h_1(z), z)$ to $(h_2(z),z)$, and, according to the previous paragraph conclusion, has a  derivative $\delta$ close to the identity.

Now, the projection along unstable foliations comute with $G$. Therefore, $\Pi_u = G^{n} \circ \Pi^n_u \circ G^{-n}$. Using the chain rule to calculate the derivative at $q_1$ we obtain a  expression of the form

$$A_1 \cdot A_2 \cdots A_n \cdot D\Pi_s^n \cdot B_n \cdots B_1$$

where $B_i$ represents the restriction of $(DG)^{-1}$ to $T_{q_1^n}N_1^n$ and $A_i$ the restriction of $(DG)^{1}$ to  $T_{q_2^n}N_2^n$. But all of these tangent spaces are, by induction, complex lines in $\C^2$, so all the $A_i$ and $B_i$ are conformal.This way,  the derivative of $\Pi_u$ is at most $\delta$ distant from being conformal. Making $\delta \rightarrow 0$ (or equivalently, $n \rightarrow \infty$) we have the desired conformality.

%By the regularity of the unstable foliation we can assume that the distance between $h_1 \text{ and } h_2$ and their derivatives are less than $C\cdot (\lambda^s)^{n\varepsilon}$, so $h_1 \circ h_2^{-1}$ is almost this close to the identity and it is enough to look at the derivative of the map that carries $z$ to $z'$ satisfying the previous equation. But this can be calculated by the implicit function theorem and it differs from the identity by such a quota for the same reason, that the derivatives of $h_1$ and $h_2$ are close.
\end{proof}

The proof of \textbf{Theorem A} will be done using the Buzard's horseshoe \cite{buzz} since it makes the comprehension easier and we will need this example later. For the general case one need just to use \emph{Markov neighborhoods} as in \cite{pix}, but the proof is easily deduced from the proof for this example. So now we proceed to a brief recapitulation of this example and construct larger Markov neighborhoods for it.

\begin{example}\label{shoe}(Buzzard)
%present the Construction of the Horseshoe in this case.
Let $S(p;l) \subset \C$ denote the open square centered at $p$ of sides parallel to the real and imaginary axis of side length equal to $l$. Consider the 9 points set $P= \{x+yi \in \C; \, (x,y) \in \{-1,\,0,\,1\}^2 $ and a positive real number $\delta<1$. Define $c_0 = 1-\delta$ and:

$$K_0 = \displaystyle{\bigcup_{a \in P} \ov{S(a;c_0)}} \text{ and } K_1 = K_0 \times K_0 \subset \C^2.$$

We identify each connected component of $K_1$, $ S(a;c_0) \times S(b;c_0) $  as the pair $(a,b) \in P^2$.

Consider now, some positive real number $c_1 \in (c_0 =1-\delta, \frac{3c_0}{2+c_0 }=\frac{3-3\delta}{3-\delta})$ and the map $f: K_0 \to \mathbbm{C}$ defined as,

$$f(w):= \sum_{a \in P}\frac{3a}{c_1}\chi_{\ov{S(a;c_0)}}(w).$$

Notice that its image is composed of nine points as is $P$. Analogously, we can define $K_g = \displaystyle{\bigcup_{a \in P} \ov{S(\sfrac{3a}{c_1};3)}} $ and define,

$$g(z):= \sum_{a \in P}-a\cdot\chi_{\ov{S(\frac{3a}{c_1};3)}}(z).$$

Then defining the maps,

\begin{align*}
    F_1(z,w)& : = (z+f(w), w)\\
    F_2(z,w) &:=(z,w+g(z))\\
    F_3(z,w) &:= \left(\frac{c_1}{3}z,\frac{3}{c_1}w\right)
\end{align*}

and making $F:K_1 \to \C^2; \, F:=F_3 \circ F_2 \circ F_1$ we have that in a connected component $(a,b)$ of $K_1$

$$F(z,w) = \left(\frac{c_1}{3}z+b, \frac{3}{c_1}(w-b)\right).$$

The maximal invariant set of $F$ over $K_1$, $\Lambda = \bigcap_{n \in \mathbbm{Z}} F^n(K_1) $, is a hyperbolic set with $0$ as a fixed saddle point. It is easy to see that $W^u_{F,\text{loc}}((0,0)) := \{0\} \times \{ S(0; c_0) \} $ is the connected component that contains $(0,0)$ of the intersection between $W^u_F(0)$ and the connected component $(0,0)$ of $K_1$. Also, the set $W^u_{F,\text{loc}}(0) \cap \Lambda$ can be seen as a conformal Cantor set $K_F$ on the complex plane (in this case $0 \times \C$) given by the maps:

\begin{align*}
    g_a: S(a;c_0) & \to S(0; 3)\\
            z & \mapsto \frac{3}{c_1}(z-a)
\end{align*}

Likewise, we can write  $W^s_{F,\text{loc}}(0) \cap \Lambda$ as the same cantor set $K$. The condition $c_1 < \frac{3c_0}{2+c_0}$ is necessary for the image of each $g_a$ cover the union of their domains.

Now we work with automorphisms of $\mathbbm{C}^2$ that are sufficiently close to this model $F$. First, we approximate $f$ and $g$ by polynomials $p_f$ and $p_g$, obtaining a map $G_0 = F_3  \circ F'_2 \circ F'_1 \in Aut(\C^2)$, where $F'_1(z,w) : = (z+p_f(w), w)$ and $F'_2(z,w) :=(z,w+p_g(z))$. Then, we fix $K' \subset \ov{K'} \subset \text{int}(K_1) $ such that considering $\Lambda_G$ the maximal invariant set by $G$ of the open set $U$ it is contained in $K'$ whenever $||G-G_0|||_{U}$ is sufficiently small, where $U=S(0;3)\times S(0;3) $. Furthermore, there is a fixed point $p_G$ that is the analytic continuation of the fixed point $(0,0)$ of $F$. Since $||G-F||$ is small we can also show that the projection $\Pi: W^s(p_G; loc) \to S(0;3)$ is a biholomorphic map close to the identity, where $W^s(p_G; loc)$ is the connected component that contains $p_G$ of $W^s(p_G) \cap U$ (notice it is a larger portion of the unstable manifold then previously defined).

Observe that  $G^{-1}(W^s(p_G; loc)) \cap S(0;\frac{3}{c_1}c_0)\times K_0 $ is made of nine different connected components, $W_1, W_2, W_3, \dots , W_9$ , each of them holomorphic curves close to being horizontal, because of the continuity dependence of the foliations on $G$ (so, as long as $f$ and $g$ are well approximated by $p_f$ and $p_g$ and $||G-G_0||$ is sufficiently small). Consider now $V_i=G (W_i \cap K'), i=1,2,\dots,9 $. Notice that all the $V_i $ are disjoint subsets of $W^s(p_G; loc)$.

According to theorem \ref{fol} and remark \ref{hololeave},  $\mathcal{F}^u_G$ can be defined whenever $G$ is sufficiently close to $F$ and we can consider its leaves to be holomorphic lines very close to the vertical lines. However, its domain may be only a small neighbourhood of $\Lambda_G$. We now show a way of constructing it that cover a large subset of $U$. 

First, consider the foliation by vertical leaves $\{z\} \times S(0;3)$ defined for $z$ on a small neighborhood of $\overline{S(0;3)}$. If $G$ is sufficiently close to $F$, there is a real number $k>1 $ (close to $1$) such that:
$$ S(a; k^{-1}c_0  ) \times S(0;3) \subset G(S(0;3) \times S(a; kc_0  )) \subset S(a; kc_0  ) \times S(0;3) \forall a \in P $$
$$\text{ and }  G(U) \cap U \subset \bigcup_{a \in P} G(S(0;3) \times S(a; kc_0  )) $$

Let $V_{-1}(a)= S(a; k^{-1}c_0  )  $ and $V_1(a)=S(a; kc_0  )$ as above. Also, let \[V(a) = G(S(0;3) \times S(a; kc_0  )) \cap \{w=0\}\]
Then $V_{-1}(a)\subset V(a) \subset V_{1}(a)$. The image of the vertical foliation restricted to $S(0;3) \times S(a; kc_0  )$  by $G$ is a foliation of $G(S(0;3) \times S(a; kc_0  ))$ described as  $(u,v)\mapsto(u+\Psi_a(u,v), v)$ for $u \in V(a)$ and $v \in S(0;3)$ (after an obvious shrinking), with $\Psi_a$ small in the $C^1$ metric as $||G-F||$ is small. Notice that $\Psi_a(u,v)$ is always holomorphic on $v$, which is equivalent to the fact that the leaves, given by $(u_0+\Psi_a(u_0,v),v)$ for $u_0$ fixed, are holomorphic curves.

For each $a \in P$ fix $\lambda: \C \to [0,1]$ a bump function with support contained in $V_1(a)$ and such that $ V_{-1}(a)\subset \{\lambda_a(z) = 1\} \subset V(a)$. It is easy to see that $\lambda_a$ with these properties can be chosen independently of $G$. We can now extend each of the foliations above to $V_{1}(a) \times S(0;3) $ by:
\[(u,v) \mapsto (u+\lambda_a(u)\cdot\Psi_a(u,v), v),\]
which yields a foliation that is $C^{\infty}$ with holomorphic leaves (for each fixed $u_0$). By choosing $\Psi_a(u,v)$ sufficiently small (relatively to $||\lambda_a||_{C^1}$) we can guarantee that the map above is injective. To guarantee that it is surjective we need only to observe that it is clearly surjective outside of $\supp \lambda_a$ and for $u' \in \supp \lambda$ the map
\[u \mapsto u' - \lambda_a(u)\cdot\Psi_a(u,v)\]
maps a set homeomorphic to the closed ball (and contained in $V_1(a)$ inside itself, hence has a fixed point.

Finally we can consider a foliation given by
\[(u,v)\mapsto (u+\sum_{a \in P}\lambda_a(u)\cdot\Psi_a(u,v), v) \]
for $(u,v) \in S(0;3) \times S(0;3)$. Restricting it to an open subset we get a foliation with the same properties of $\mathcal{F}_1$ in the proof of theorem \ref{fol} and repeat the construction to obtain the foliation $\mathcal{F}_G^u$.

In view of the continuous dependence of the foliation on $G$, and maybe by restricting the foliation to an open set, we can assume that the leaves of $\mathcal{F}^u_G$ are almost vertical. Thusly, we can define the projections along stable leaves $\Pi_i : W_i \to   W^s(p_G; loc) $.

\end{example}

\begin{proof} (\textbf{Theorem A})
We need to show that can express $K_G=\Pi(W^s(p_G; loc) \cap U \cap \Lambda_G)$ as a dynamically defined conformal Cantor set through the maps $f_i : \Pi(V_i) \to S((0,0);  3) $ where $f_i= \Pi \circ \Pi_i\circ G^{-1} \circ \Pi^{-1}$. Let us show that $K_G$ is the maximal invariant set of these maps. Take $x \in W^s(p_G; c_0) \cap U \cap \Lambda_G$. Thus, $G^{-1}(x) \in \Lambda \subset U$ , so there exists $i \in \{1,2,3, \dots, 9\}$ such that $G^{-1}(x) \in W_i $, which implies $x \in V_i$. Likewise,  $y=\Pi_i(G^{-1}(x)) \in \Lambda$. To show this,  we see that $G^{n}(y) \in W^s(p_G; c_0) \cap U, \text{ for all } n\geq 0 $, as this set is carried into itself by forward iteration of $G$. Additionally, $G^{-n}(y) \in U \text{ for all } n > 0 $  because $y \in W^s(G(x))$ and backwards iterations of unstable manifolds always remain inside $U$ by construction. So, $y \in \bigcap_{n \in \mathbbm{Z} G^n(U)}=\Lambda_G$, and in particular $y \in W^s(p_G; c_0) \cap U$. Hence, as we have already shown, $y \in V_i \text{ for some } i \in \{1,2,3,\dots,9\}$. Repeating this argument inductively we obtain that the orbit of $\Pi(x)$ always remains on $\bigcup_{i=1}^9 V_i$. 

On the other hand, if $x \in W^s(p_G; c_0) \cap U$ is such that the forward orbit of $\Pi(x)$ by the maps $f_i$ is always in $ \bigcup_{i=1}^9 V_i$, then, using that projections along the unstable leaves commute with the map $G$ and denoting by $x_n$ the $n-\text{th}$ term of the orbit of $x$ by the $f_i$, we can show that $G^{-n}(x)=\Pi_u\circ\Pi^{-1}(x_n) \text{, } (n > 0)$, $\Pi_u$ being a projection along unstable leaves between two close components of $W^u(p_G) \cap U$. This implies that $G^{-n}(x) \in U \text{, } \forall n >0$, and as $G^{n}(x) \in U, \forall n \geq 0$, then $x \in \Lambda_G$.

It is clear that the manifolds $W^s_G(p_g,loc)$ and $W_i$ satisfy the properties of the transversal sections on the lemma just above. It is then clear that the maps $f_i$ are $C^{1+\varepsilon}$ and conformal at $K$ (notice that $\Pi$ is a parametrization of a complex line). 

The general case of a complex horseshoe can be done using Markov neighborhoods, as described in \cite{pix}. The improvement from the work of Bowen \cite{bowen} is that the rectangles are open sets of the ambient space filled with our stable and unstable foliations. Letting $R_j, \, j=1, \dots,m$ be the Markov partition of $\Lambda_G$ we consider $W(G)$, a large compact part of $W^s_G(p_G)$ of some fixed point $G$ that has only one connected component intersection with $R_j$ for all $j=1, \dots,n$. Then, define the sets $G(i,j)$ as $G^{-1}(R_i \cap W(G) )$ and the maps 
\begin{align*}
     g_{(i,j)} & : G(i,j) \to W(G) \\
    q & \mapsto \Pi^s_j (G(q))
\end{align*}

for all $i,\,j=1,\dots,n$, where $\Pi^u_j$ denotes the projection along the stable leaves \textit{inside} $R_j$. Notice that in this case the previous need to extend the foliations disappears given the presence of the Markov partition.

Verifying that this set of data defines a dynamically defined Cantor set follows from the arguments on the example above almost \say{\textit{ipsis literis}}. This finishes the proof.

\end{proof}

%notice that the maximum angle perturbation for a transormation $A \in L(\mathbbm{C})$, defined as $\sup_{v,u \in \mathbbm{C}} \frac{<Av,Au>}{||Av||\cdot||Au||}-\frac{<v,u>}{||v||\cdot||u||}$ is preserved by composition with conformal matrices.

\begin{remark}
One can also observe, that taking $p_f$ and $p_g$ sufficiently good approximations and requiring $||G-G_0|_{U}||$ to be sufficiently small, the Cantor set obtained above, identified as $K_G$ is in a small open neighbourhood $\mathcal{V}$ of $K_F$ in $\Om_{{P}^{\N}}$. This will be important in section \ref{last}.

\end{remark}
\section{A sufficient criterion for the stability of conformal Cantor sets.}
In this section we explore the consequences of the conformality on the structure of the Cantor sets. The first result is the existence of limit geometries and some consequences of it. In the second subsection we define renormalization operators and verify that the limit geometries are an atractor with respect to their actions. In the last section, we use this last fact to show that the concept of recurrent compact is a sufficient criterion for the stability of intersections between Cantor sets. All of these concepts and techniques are natural extensions from the real case.

\subsection{Limit Geometries}\label{4.1}

Given a conformal Cantor set $K$ we define $K(a)=K \cap G(a)$ and fix $c(a)\in K(a)$ for all $a \in \mathbb{A}$. Additionally, given $\ul{\theta} = ( \dots, \theta_{-n}, \dots, \theta_0 ) \in \Sigma^- $ we write $\ul{\theta}_n = (\theta_{-n}, \dots, \theta_0  )$ and $r_{{\ul{\theta}}_n}:= \text{diam}(G^*(\ul{{\theta_n}}))$.

As previously mentioned we can extend $g$ and its inverses to a neighbourhood of $\bigsqcup_{a\in\mathbb{A}}G(a)$, so we may consider, in the case that $(a_i,a_{i+1}) \in B $, $f_{(a_i,a_{i+1})}$ defined from $ G^*(a_{i+1})$ to $G^*(a_i) $; and hence also consider $f_{\ul{a}}: G^*(a_0) \to G^*(\ul{a})$ when $\ul{a}\in \Sigma^{fin}$. With this in mind we can define, for any $\ul{\theta} \in \Sigma^-$ and $n \geq 1$: 
$$ c_{\ul{\theta}_n}=f_{\ul{\theta}_n}(c_{\theta_0})$$
$$k^{\ul{\theta}}_n= \Phi_{\ul{\theta}_n} \circ f_{{\ul{\theta}_n}}$$

where $k^{\ul{\theta}}_n: G^*(\theta_0) \to \C $ and  $\Phi_{\ul{\theta}_n}$ is the affine transformation over $\C$, $\Phi_{\ul{\theta}_n}(z)=\alpha \cdot z + \beta, \alpha \in \C^*, \beta \in \C$, such that $\Phi_{\ul{\theta}_n}( c_{\ul{\theta}_n})=0$ and $ D(\Phi_{\ul{\theta}_n} \circ  f_{{\ul{\theta}_n}})(c_{\theta_0})=1\in \C $. A transformation with these properties exists because the map $g$, and thusly its inverse branches, are conformal on the set $K$, so $Df_{{\ul{\theta}_n}}(c_{\theta_0})$ is a conformal matrix that can be seen as a linear operator over $\C$, or precisely, a multiplication by a complex number. We denote the space of affine transformations over $\C$ by $Aff(\C)$.

Define $\Sigma^-_a = \{\ul{\theta} \in \Sigma^-, \ul{\theta}_0 = a \}$ and consider in this set the topology given by the metric $d(\ul{\theta}_1,\ul{\theta}_2) = \text{diam}(G^*(\ul{\theta}_1 \wedge \ul{\theta}_2)) $. Likewise, let the space $\text{Emb}_{1+\varepsilon}(G^*(a), \C)$ of $C^{1+\varepsilon}$ embeddings from $G^*(a)$ to $\C$ with $C^{1+\varepsilon}$ inverse have the topology given by the metric $d(g_1,g_2)= \max\{||g_1-g_2||,\log||D(g_1 \circ (g_2)^{-1})||\}$. Notice that this metric is equivalent to the usual $C^{1+\varepsilon}$ metric. With these notations and considerations we have the following lemma.

\begin{lemma}\label{limgeo} (Limit Geometries) For each $\theta \in \Sigma^-$ the sequence of $C^r$ embeddings $k^{\ul{\theta}}_n: G^*(\theta_0) \to \C $ converges in the $C^{r}$ topology to an embedding $ k^{\ul{\theta}}: G^*(\theta_0) \to \C $. Moreover, the convergence is uniform over all $\ul{\theta} \in \Sigma^-$ and in a small neighbourhood of $g$ in $\Om^r_{\Sigma}$. The map $k: \Sigma^-_a \to \text{Emb}(G^*(a), \C), \; \ul{\theta} \mapsto k^{\ul{\theta}}$ is Hölder, if we consider the metrics described above for both spaces. The $ k^{\ul{\theta}}: G^*(\theta_0) \to \C $ defined for any $\theta \in \Sigma^-$ are called the limit geometries of $K$.

\end{lemma}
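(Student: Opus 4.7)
The plan is to exploit the recursion $f_{\ul{\theta}_{n+1}} = g|^{-1}_{G(\theta_{-n-1})} \circ f_{\ul{\theta}_n}$, which follows directly from the definition of $f_{\ul{a}}$ as a composition of inverse branches, and rewrite it in normalized form as $k^{\ul{\theta}}_{n+1} = B_n \circ k^{\ul{\theta}}_n$, where $B_n := \Phi_{\ul{\theta}_{n+1}} \circ g|^{-1}_{G(\theta_{-n-1})} \circ \Phi_{\ul{\theta}_n}^{-1}$. The normalizations defining the affine maps $\Phi$ force $B_n(0)=0$ and $DB_n(0)=1\in\C$, so everything reduces to showing that $\|B_n - \text{Id}\|_{C^{1+\varepsilon}}$ decays geometrically in $n$; summing geometrically will then make $(k^{\ul{\theta}}_n)$ a $C^{1+\varepsilon}$-Cauchy sequence.

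The key estimate rests on two ingredients. A classical bounded-distortion (Koebe-type) argument along the inverse-branch chain, using that $Dg$ is conformal on $K$ and $\varepsilon$-Hölder, shows that $Df_{\ul{\theta}_n}$ has uniformly bounded multiplicative distortion on $G^*(\theta_0)$. Combined with Lemma \ref{size} this yields $|D\Phi_{\ul{\theta}_n}^{-1}| = O(\mu^{-n})$ together with the fact that $k^{\ul{\theta}}_n(G^*(\theta_0))$ has diameter bounded above \emph{and} below independently of $n$. Differentiating $B_n$ by the chain rule and using that conformal derivatives commute, one obtains $DB_n(z) - 1 = D\Phi_{\ul{\theta}_{n+1}} D\Phi_{\ul{\theta}_n}^{-1} \cdot [Dg^{-1}(\Phi_{\ul{\theta}_n}^{-1}(z)) - Dg^{-1}(c_{\ul{\theta}_n})]$, and the $\varepsilon$-Hölder continuity of $Dg^{-1}$ then gives the pointwise bound $|DB_n(z) - 1| \leq C \, |D\Phi_{\ul{\theta}_n}^{-1}|^{\varepsilon} |z|^{\varepsilon} = O(\mu^{-n\varepsilon})$, together with the analogous $\varepsilon$-Hölder bound on $DB_n$. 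Hence $\|B_n - \text{Id}\|_{C^{1+\varepsilon}} = O(\mu^{-n\varepsilon})$; since $k^{\ul{\theta}}_n$ is uniformly $C^{1+\varepsilon}$-bounded, the same rate holds for $\|k^{\ul{\theta}}_{n+1} - k^{\ul{\theta}}_n\|_{C^{1+\varepsilon}}$, the telescoping series converges, and the limit $k^{\ul{\theta}}$ is an embedding because bounded distortion keeps $|Dk^{\ul{\theta}}_n|$ away from $0$ in the limit.

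All constants in the argument depend only on $\mu$, the constant $C$ of Lemma \ref{size}, and the $C^{1+\varepsilon}$ size of $g$, quantities that vary continuously with $g \in \Om_{\Sigma}$, so the convergence is uniform in $\ul{\theta} \in \Sigma^-_a$ and on a small neighbourhood of $g$. For Hölder dependence on $\ul{\theta}$: if $\ul{\theta}$ and $\ul{\theta}'$ first disagree at index $-m-1$ then $k^{\ul{\theta}}_j = k^{\ul{\theta}'}_j$ for $j \leq m$, so the triangle inequality plus the tail bound at $n=m$ gives $\|k^{\ul{\theta}} - k^{\ul{\theta}'}\|_{C^{1+\varepsilon}} = O(\mu^{-m\varepsilon})$; since $d(\ul{\theta},\ul{\theta}') = \text{diam}(G^*(\ul{\theta} \wedge \ul{\theta}'))$ is controlled both from above and from below by powers of the expansion/contraction rates of $g$, this produces Hölder continuity of $\ul{\theta} \mapsto k^{\ul{\theta}}$ for some positive exponent.

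The main subtlety to watch is the bounded-distortion step: one needs $k^{\ul{\theta}}_n(G^*(\theta_0))$ to remain of uniformly bounded, non-degenerate size so that $B_n$ lives on a domain of controlled shape on which the $\varepsilon$-Hölder behaviour of $Dg^{-1}$ genuinely produces the crucial $\mu^{-n\varepsilon}$ rate. This is precisely where the conformality hypothesis enters: it makes $Df_{\ul{\theta}_n}(c_{\theta_0})$ a single complex scalar that $\Phi_{\ul{\theta}_n}$ can cancel exactly, so the normalized maps never accumulate the shear distortion that would arise if $Dg|_K$ were a general $2\times 2$ real matrix.
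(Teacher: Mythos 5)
Your proposal follows essentially the same route as the paper's proof: your maps $B_n$ are exactly the paper's $\Psi^{\ul{\theta}}_{n+1}$, the telescoping decomposition $k^{\ul{\theta}}_n = \Psi^{\ul{\theta}}_n \circ \dots \circ \Psi^{\ul{\theta}}_2 \circ k^{\ul{\theta}}_1$, the derivative estimate $|D\Psi^{\ul{\theta}}_n(z)-\mathrm{Id}| \le C\mu^{-n\varepsilon}$, the bounded-distortion ingredient $r_{\ul{\theta}_n}\asymp |Df_{\ul{\theta}_n}(c_{\theta_0})|$, and the H\"older continuity of $\ul{\theta}\mapsto k^{\ul{\theta}}$ via the common prefix $\ul{\theta}\wedge\ul{\theta}'$ all coincide with the paper's argument.

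Two small points worth tightening. First, the phrase ``conformal derivatives commute'' is not quite accurate: $D\Phi_{\ul{\theta}_n}^{-1}$ is conformal, but $Dg^{-1}(\Phi_{\ul{\theta}_n}^{-1}(z))-Dg^{-1}(c_{\ul{\theta}_n})$ is a general $2\times 2$ matrix away from $K$ (conformality is only hypothesized on $K$), so the exact scalar factorization you wrote does not hold off the Cantor set. This is cosmetic, since the norm bound you extract is a consequence of submultiplicativity and not of the factorization, but it should not be presented as an identity. Second, you invoke uniform $C^{1+\varepsilon}$-boundedness of the $k^{\ul{\theta}}_n$ to upgrade $\|B_n-\mathrm{Id}\|_{C^{1+\varepsilon}}=O(\mu^{-n\varepsilon})$ into a Cauchy estimate for the composed sequence; this boundedness is itself established in the paper by the short inductive estimate on the H\"older constants $H_n$, and should be proved rather than assumed, since the composition inequality for H\"older norms requires it. Finally, the paper also records the upgrade to $C^{r}$ for $r\ge 2$ (the affine conjugations flatten higher derivatives of $f_{(\theta_{-n},\theta_{-n+1})}$); your sketch only treats the $1<r<2$ range.
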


\begin{proof}

We will first prove the result for $1 \leq r < 2$. Consider for each $n \geq 2$ (and $\ul{\theta} \in \Sigma^-$) the functions $\Psi^{\ul{\theta}}_n: \text{Im}(k^{\ul{\theta}}_{n-1}) \to \C$:
$$\Psi^{\ul{\theta}}_n= \Phi_{\ul{\theta}_n} \circ f_ {\left(\theta_{-n}, \theta{-n+1}\right)}\circ \Phi_{\ul{\theta}_{n-1}}^{-1}.$$
Notice that, then:
\begin{equation} \label{eq:psi}
k^{\ul{\theta}}_n=\Psi^{\ul{\theta}}_n \circ \Psi^{\ul{\theta}}_{n-1} \circ \dots \circ \Psi^{\ul{\theta}}_2 \circ k^{\ul{\theta}}_1
\end{equation}
We proceed by controlling the functions $\Psi^{\ul{\theta}}_n$ and showing that they are exponentially close to the identity.

First, the domain of $f_{(\theta_{-n}, \theta_{-n+1})}$ in the definition of $\Psi^{\ul{\theta}}_n$ is $G^*(\theta_n)$. Denoting its diameter by $r_{\ul{\theta}_{n}}$, we know that $r_{\ul{\theta}_{n}} \leq C \cdot \mu^{-n}$ for some constant $C>0$, as shown in lemma \ref{size}. But we can do better. In what follows all the $C$'s (with subscript, superscript, or without them) will always denote a positive real constant that will in some way depend on the other constants previously appearing on this proof, but never on $n \in \N$, nor $\ul{\theta}$.  If a map $f$ is $C^{1+\varepsilon}$ on an open subset of $\C$, then, for any point $z \in \C$ and $h \in \C$ small enough:

\[  ||f(z+h)-(f(z)+Df(z)\cdot h)|| < C||h||^{1+\varepsilon} \]

Consequently, $ f_{(\theta_{-n}, \theta_{-n+1})}:G^*(\ul{\theta}_{n-1}) \to G^*(\ul{\theta}_n)$ is $C_f \cdot r_{\ul{\theta}_{n-1}}^{1+\varepsilon}$ close to the map $A_{\ul{\theta}_n} \in Aff(\C)$ described by \[A_{\ul{\theta}_n}(c_{\ul{\theta}_{n-1}})=c_{\ul{\theta}_n} \text{ and } DA_{\ul{\theta}_n} = Df_{(\theta_{-n}, \theta_{-n+1})}(c_{\ul{\theta}_{n-1}}),\] and thus, if $n$ is large enough:
 \[r_{\ul{\theta}_{n}} \leq |Df_{(\theta_{-n}, \theta_{-n+1})}|\cdot r_{\ul{\theta}_{n-1}} + C_f \cdot r_{\ul{\theta}_{n-1}}^{1+\varepsilon} \leq  r_{\ul{\theta}_{n-1}} \cdot \left( |Df_{(\theta_{-n}, \theta_{-n+1})}| + C_1 \cdot \mu^{(-n+1)\varepsilon}\right)\] 

Arguing by induction, we obtain:
\begin{gather*}
    \log r_{\ul{\theta}_{n}} \leq \log |Df_{\ul{\theta}_n}(c_{\theta_0})| +  ||Dg|| \cdot \sum_{j=0}^{n-1} \left(  C_1 \cdot \mu^{(-n+1)\varepsilon} \right) \leq \log |Df_{\ul{\theta}_n}(c_{\theta_0})| + C_2 ,\text{ so }  \\  r_{\ul{\theta}_{n}} \leq C' \cdot |Df_{\ul{\theta}_n}(c_{\theta_0})| \leq C' \cdot \mu^{-n} 
\end{gather*}
    
In a completely analogous way we can show, maybe enlarging $C'$, that:
\begin{equation} \label{eq:size}
    {C'}^{-1} \cdot |Df_{\ul{\theta}_n}(c_{\theta_0})|   \leq  r_{\ul{\theta}_{n}} \leq C' \cdot |Df_{\ul{\theta}_n}(c_{\theta_0})| 
\end{equation}

and so the size of the $G^*(\ul{\theta}_n)$ is controlled. This implies that  $ ||f_{(\theta_{-n}, \theta_{-n+1})}-A_{\ul{\theta}_n}|| \leq C \cdot |Df_{\ul{\theta}_n}(c_{\theta_0})|^{1+\varepsilon} $ for some constant $C$, for all $\ul{\theta} \in \Sigma^-$. On the other hand, by construction $ \Phi_{\ul{\theta}_n} \circ A_{\ul{\theta}_n} \circ \Phi_{\ul{\theta}_{n-1}}^{-1} = Id$ and $ D\Phi_{\ul{\theta}_n} = \left(Df_{{\ul{\theta}_n}}(c_{\theta_0})\right)^{-1} $, therefore: 
\begin{align*}
||\Psi^{\ul{\theta}}_n -Id|| & = ||\Psi^{\ul{\theta}}_n -\Phi_{\ul{\theta}_n}\circ A_{\ul{\theta}_n} \circ \Phi_{\ul{\theta}_{n-1}} || \leq |D\Phi_{\ul{\theta}_n}| \cdot || f_{(\theta_{-n}, \theta_{-n+1})}-A_{\ul{\theta}_n} ||  \\
& \leq |Df_{{\ul{\theta}_n}}(c_{\theta_0})|^{-1} \cdot C \cdot |Df_{\ul{\theta}_{n-1}}(c_{\theta_0})|^{1+\varepsilon}\\
& \leq C_3 \cdot {(\mu^{-\varepsilon})}^n
\end{align*}
as we wished to obtain.
This is enough to show that $\{\Psi^{\ul{\theta}}_n\}_{n\geq0}$ is a Cauchy sequence, at least in $C^0$ metric. In fact, for $m,l \geq 1$, 
\begin{align*} 
||\Psi^{\ul{\theta}}_{m+l}-\Psi^{\ul{\theta}}_m|| &= || \Psi^{\ul{\theta}}_{m+l} \circ \dots \circ \Psi^{\ul{\theta}}_2 \circ k^{\ul{\theta}}_1 - \Psi^{\ul{\theta}}_{m} \circ \dots \circ \Psi^{\ul{\theta}}_2 \circ k^{\ul{\theta}}_1  ||\\
& \leq \sum_{j=1}^l || \Psi^{\ul{\theta}}_{m+j} \circ \dots \circ \Psi^{\ul{\theta}}_2 \circ k^{\ul{\theta}}_1 - \Psi^{\ul{\theta}}_{m+j-1} \circ \dots \circ \Psi^{\ul{\theta}}_2 \circ k^{\ul{\theta}}_1 || \leq  \sum_{j=1}^l || \Psi^{\ul{\theta}}_{m+j} -\text{Id} || \\
& \leq  \sum_{j=1}^l C_3 \cdot {(\mu^{-\varepsilon})}^{m+j} \leq \frac{C_3 \cdot (\mu^{-\varepsilon})^{m}}{1-\mu^{-\varepsilon}}
\end{align*}

which implies that $||\Psi^{\ul{\theta}}_{m+l}-\Psi^{\ul{\theta}}_m|| \rightarrow 0$ as $m \rightarrow \infty$. Further, for any point $z \in \text{Im}(k^{\ul{\theta}}_{n-1})$ we can calculate $ D\Psi^{\ul{\theta}}_n (z) = D\Phi_{\ul{\theta}_n} \cdot Df_ {\left(\theta_{-n, \theta{-n+1}}\right)} (  \Phi_{\ul{\theta}_{n-1}}^{-1} (z) )  \cdot  D\Phi_{\ul{\theta}_{n-1}}^{-1}$. However, by hypothesis, we have that $D\Psi^{\ul{\theta}}_n (c_{\theta_0}) = \text{Id}$ and \[d\left(\Phi_{\ul{\theta}_{n-1}}^{-1} (z),\Phi_{\ul{\theta}_{n-1}}^{-1} (c_{\theta_0})\right) \leq \text{diam} (G^* (\ul{\theta}_{n-1})). \] Then, using that $Df$ is $\varepsilon$ - Hölder, we conclude that
\begin{equation} \label{eq:deriv}
     |D\Psi^{\ul{\theta}}_n (z)-\text{Id}| \leq C_f \cdot |D\Phi_{\ul{\theta}_n}| \cdot |D\Phi_{\ul{\theta}_{n-1}}|^{-1} \cdot  r_{\ul{\theta}_{n-1}}^{\varepsilon}  \leq C_4 \mu^{-n \varepsilon} , 
\end{equation}
since  $|D\Phi_{\ul{\theta}_n}|$ and $|D\Phi_{\ul{\theta}_{n-1}}|$ are comparable (because $D\Phi_{\ul{\theta}_n}=  Df_{\ul{\theta}_{n}}(c_{\theta_0})^{-1}$ and so $|D\Phi_{\ul{\theta}_n}| \cdot |D\Phi_{\ul{\theta}_{n-1}}|^{-1}$ is controlled by $||Dg||$).

Now we can show that $\{||Dk^{\ul{\theta}}_n||\}_{n \geq 1}$ is bounded.  Indeed, $|| Dk^{\ul{\theta}}_n || \leq \prod_{j\geq 2} ^ {n}|| \Psi^{\ul{\theta}}_j || \cdot || Dk^{\ul{\theta}}_1 ||$ implies that:

\begin{align*} 
\log \left( || Dk^{\ul{\theta}}_n ||  \right) & \leq {\sum_{j=2}^{n} \log || \Psi^{\ul{\theta}}_j || } + C_0 \\
& \leq  {\sum_{j=2}^{n} \log |(||\text{Id}||+|| \Psi^{\ul{\theta}}_j - \text{Id}||) } + C_0 \leq   {\sum_{j=2}^{n}  C_4 \mu^{-j \varepsilon}  } + C_0 \\
& \leq \frac{C_4\mu^{-2\varepsilon}+C_0-C_0\mu^{\varepsilon}}{1-\mu^{\varepsilon}} = C_5
\end{align*}

The same argument can be used to show that $|(Dk^{\ul{\theta}}_n)^{-1}|$ is bounded. It also follows that, 
\begin{align*}
||Dk^{\ul{\theta}}_{m+l}-Dk^{\ul{\theta}}_{m}||& \leq \sum_{j=0}^{l-1} ||Dk^{\ul{\theta}}_{m+j+1}-Dk^{\ul{\theta}}_{m+j}|| \leq \sum_{j=0}^{l-1} ||D\Psi^{\ul{\theta}}_{m+j+1}-\text{Id}||\cdot ||Dk^{\ul{\theta}}_{m+j}|| \\
& \leq C_5 \cdot {\sum_{j=0}^{l-1} C_4 \mu^{-(m+j+1) \varepsilon}  } \leq C_6 \cdot  \mu^{-m}
\end{align*}

which shows that $\{k^{\ul{\theta}}_n\}_{n\geq0}$ is a Cauchy sequence also in the  $C^1$ metric, and so it converges to a $C^1$ map $ k^{\ul{\theta}}$. Since $|(Dk^{\ul{\theta}}_n)^{-1}|$ is bounded this also implies that the inverse maps $\{(k^{\ul{\theta}}_n)^{-1}_{n\geq0}\}$ also converge in the $C^1$ metric to the inverse of $k^{\ul{\theta}}$.

In this same manner, we are left with showing that  $ k^{\ul{\theta}}$ is $C^{1+\varepsilon}$. This is true for $k^{\ul{\theta}}_{n}$ for all $n \geq 0$. Indeed, for a given $\ul{\theta} \in \Sigma ^-$ we write:
\[I_{n}(x,y)=|Dk^{\ul{\theta}}_{n}(x)-Dk^{\ul{\theta}}_{n}(y)| < H_n \cdot |x-y|^{\varepsilon}, n \geq 0, x, y \in G^*(\theta_0) .\]

By equation \ref{eq:psi}, equation \ref{eq:deriv} and the fact that $Dk^{\ul{\theta}}_{n}$ are bounded we have that:
\begin{align*}
I_{n}(x,y) & = |D(\Psi_n \circ k^{\ul{\theta}}_{n-1})(x)-D(\Psi_n \circ k^{\ul{\theta}}_{n-1})(y)| \leq \\
           & \leq |D\Psi_n (k^{\ul{\theta}}_{n-1}(x)) \left(Dk^{\ul{\theta}}_{n-1}(x)-Dk^{\ul{\theta}}_{n-1}(y)  \right) | + |\left( D\Psi_n(k^{\ul{\theta}}_{n-1}(x))- \right.\\
           &  \;\;\;    \left.D\Psi_n (k^{\ul{\theta}}_{n-1}(y))\right)Dk^{\ul{\theta}}_{n-1}(y)|\\
           & \leq (1+C_4\cdot\mu^{(-n+1)\varepsilon})\cdot I_{n-1}(x,y) + C_5 \cdot ||D_g|| \cdot |Df_{(\theta_{-n}, \theta_{-n+1})}(\Phi_{\ul{\theta}_{n-1}}^{-1}(x)) \\
           & \;\;\;  -Df_{(\theta_{-n}, \theta_{-n+1})}(\Phi_{\ul{\theta}_{n-1}}^{-1}(y))|\\
           & \leq (1+C_4\cdot\mu^{(-n+1)\varepsilon})\cdot I_{n-1}(x,y) + C_5\cdot ||D_g|| \cdot C_f  \cdot \mu^{(-n+1)\varepsilon} \cdot |x-y|^{\varepsilon}\\
           & \leq \left((1+C_4\cdot\mu^{(-n+1)\varepsilon})\cdot H_{n-1}+  C_5 \cdot ||D_g|| \cdot C_f  \cdot \mu^{(-n+1)\varepsilon} \right)\cdot |x-y|^{\varepsilon},
\end{align*}

which inductively shows that these functions have Hölder continuous derivatives. Additionally, we can choose the Hölder constants satisfying the relation:

\begin{equation}
H_n  \leq (1+C_4\cdot\mu^{(-n+1)\varepsilon}) \cdot H_{n-1}+  C_7 \cdot \mu^{(-n+1)\varepsilon}  
\end{equation}

and then the sequence $\{ H_n \}_{n \geq 1}$ is bounded. Effectively, it is crescent and if $H_{n-1} > 1 $ then $H_n \leq (1+C_4\cdot\mu^{(-n+1)\varepsilon} +  C_7 \cdot \mu^{(-n+1)\varepsilon}) \cdot H_{n-1} \leq (1+C_8\cdot\mu^{(-n+1)\varepsilon}) \cdot H_{n-1}$ and using the same strategy as above we have:

\begin{align*}
    \log H_n & \leq \log H_{n-1}  +  \log (1+C_8\cdot\mu^{(-n+1)\varepsilon}) \\
& \leq \sum_{j=1}^{n-1} \log (1+C_8\cdot\mu^{j\varepsilon}) \leq \sum_{j=1}^{n-1} C_8\cdot\mu^{j\varepsilon} \leq H.
\end{align*}

as stated. 

Finally, for each pair $x,y \in G^*(\theta_0)$ there is $n\geq 0$ such that $||Dk^{\ul{\theta}}_{n}-Dk^{\ul{\theta}}||$ is less than $|x-y|^{\varepsilon}$ so, by triangle inequality we have $|Dk^{\ul{\theta}}(x)-Dk^{\ul{\theta}}(y)| < (H+2) \cdot |x-y|^{\varepsilon}$.

All the constants appearing in the estimatives above depend continuously (actually, they are simple functions) on the $C^1$ norm of $g$ and the Hölder constant and exponent of $Dg$, and so, for any $g'$ sufficiently close to $g$ all of those estimatives would be the same except by a minor pre-fixed error. This implies that the convergence we just shown is uniform not only over $\Sigma^-$ but also on a small neighborhood of $g$ in the Hölder topology. \

Now, for a given $\ul{\theta}\in \Sigma^-$, the norm of $D \left(k^{\ul{\theta}} \circ ({k^{\ul{\theta}}_n)}^{-1}\right)$ is uniformly controlled by the diameter of $G^*(\ul{\theta}_n)$. Indeed,
$$D \left(k^{\ul{\theta}} \circ ({k^{\ul{\theta}}_n)}^{-1}\right)(z)= \lim_{m \rightarrow \infty} D\Psi^{\ul{\theta}}_m ( k^{\ul{\theta}}_{m} \circ ({k^{\ul{\theta}}_n)}^{-1} (z))  \cdot \dots \cdot D\Psi^{\ul{\theta}}_{n+1} (z)$$

and by using the same methods along this proof together with $|D\Psi^{\ul{\theta}}_n (z)-\text{Id}| \leq C_f \cdot ||Dg|| \cdot r_{{\ul{\theta}}_{n-1}}^{\varepsilon} $ we can show that:
$$\log\,\, | D \left(k^{\ul{\theta}} \circ ({k^{\ul{\theta}}_n)}^{-1}\right) | \,\, \leq C_9 \cdot r_{{\ul{\theta}}_n}^{\varepsilon} .$$

Also analogously we have that:

$$ ||k^{\ul{\theta}} - {k^{\ul{\theta}}_n} ||\leq C_{10} \cdot r_{{\ul{\theta}}_n}^{\varepsilon} .$$

So for any $\ul{\theta}_1$ and $\ul{\theta}_2 \in \Sigma^-$ we have  
\[||k^{\ul{\theta}_1} - k^{\ul{\theta}_2} || \leq C_{10} \cdot \text{diam} ( G^* (\ul{\theta}_1 \wedge\ul{\theta}_2 ))^{\varepsilon}  \text{ and } \]
\[||\log(|D\left(k^{\ul{\theta}_1}\circ (k^{\ul{\theta}_2})^{-1}\right)||| \leq  C_9 \cdot \text{diam} ( G^* (\ul{\theta}_1 \wedge\ul{\theta}_2 ))^{\varepsilon}.\]
In this manner, in the metric described in the lemma, the association $\ul{\theta} \rightarrow k^{\ul{\theta}}$ is Hölder continuous as we wished to obtain.

If the map $g$ defining the cantor set is $C^r, r \geq 2$ than the convergence can be taken in the $C^r$ metric. This happens because the composition with affine maps on the definition of $\Psi^{\ul{\theta}}_n$ \enquote{flattens} the derivatives of $f_{\theta_{-n}, \theta_{-n+1}}$. As we have seen above the first order derivatives are close to the identity, or close (in norm) to $1=r_{\ul{\theta}_n}^0$. Analogously, the derivative of $r$ order has norm less than $r_{\ul{\theta}_n}^{r-1}$. This allow us to control the $C^r$ norm of $ k^{\ul{\theta}}_{n}$ by the $C^1$ metric and so the convergence is proved also in this metric.

\end{proof}

As an immediate consequence of the equation  \ref{eq:size} we have the following bounded distortion property:

\begin{corollary}\label{bd}
There is a constant $C>0$ such that for every pair of points $c_1,\, c_2 \in K(a)$ we have
$$C^{-1 }\leq \frac{|Df_{\ul{\theta}_n}(c_1)|}{|Df_{\ul{\theta}_n}(c_2)|} \leq C$$
for all $\ul{\theta}_n \in \Sigma^{fin}$ with $\theta_0=a$.
\end{corollary}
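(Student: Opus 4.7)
The plan is to read equation (\ref{eq:size}) as a statement valid for an arbitrary reference point $c \in K(\theta_0)$, not just the particular $c_{\theta_0}$ fixed at the outset of the construction of the limit geometries. Inspecting the proof of (\ref{eq:size}), the key estimates — most notably $\|f_{(\theta_{-n},\theta_{-n+1})} - A_{\ul{\theta}_n}\| \leq C_f \cdot r_{\ul{\theta}_{n-1}}^{1+\varepsilon}$ coming from the $C^{1+\varepsilon}$ regularity of $g$, together with the comparability of $|D\Phi_{\ul{\theta}_n}|$ and $|D\Phi_{\ul{\theta}_{n-1}}|$ — depend only on the $C^1$ norm of $g$, the Hölder constant and exponent of $Dg$, and the expansion factor $\mu$. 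None of them uses any special property of $c_{\theta_0}$ beyond it belonging to $K(\theta_0)$. So (\ref{eq:size}) holds with a single constant $C'$ uniformly over all $c \in K(\theta_0)$.

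Granting this, the corollary is immediate. Applying (\ref{eq:size}) with base points $c_1$ and $c_2$ (both lying in $K(\theta_0) = K(a)$) yields
\[
{C'}^{-1}\,|Df_{\ul{\theta}_n}(c_i)| \;\leq\; r_{\ul{\theta}_n} \;\leq\; C'\,|Df_{\ul{\theta}_n}(c_i)|, \qquad i = 1,2.
\]
Dividing the upper bound for $i=1$ by the lower bound for $i=2$ gives
\[
\frac{|Df_{\ul{\theta}_n}(c_1)|}{|Df_{\ul{\theta}_n}(c_2)|} \;\leq\; \frac{C'\,r_{\ul{\theta}_n}}{{C'}^{-1}\,r_{\ul{\theta}_n}} \;=\; {C'}^2,
\]
and exchanging the roles of $c_1$ and $c_2$ gives the matching lower bound, so $C := {C'}^2$ works.

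The only point requiring verification is the claim of uniformity in the base point made in the first paragraph; once that reinterpretation of the lemma is accepted there is no real obstacle. If one prefers to avoid revisiting the proof of (\ref{eq:size}), an equally quick direct argument is available via the chain rule: writing
\[
\log\frac{|Df_{\ul{\theta}_n}(c_1)|}{|Df_{\ul{\theta}_n}(c_2)|} \;=\; \sum_{j=1}^{n} \log\frac{|Dg(y_j^{1})|}{|Dg(y_j^{2})|},
\]
with $y_j^i = g^{n-j}(f_{\ul{\theta}_n}(c_i)) \in G^*(\ul{\theta}_j)$, Lemma \ref{size} gives $|y_j^1 - y_j^2| \leq C\mu^{-j}$, and the Hölder continuity of $Dg$ together with the convergent geometric series $\sum_j \mu^{-j\varepsilon}$ bounds the sum by a constant independent of $n$ and $\ul{\theta}$, with the same conclusion.
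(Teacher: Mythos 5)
Your argument is correct and matches the paper's intent: the paper states the corollary as an ``immediate consequence'' of equation (\ref{eq:size}), and your first paragraph supplies precisely the only missing observation, namely that the constant $C'$ in (\ref{eq:size}) is uniform over the choice of base point $c\in K(\theta_0)$ (the proof of (\ref{eq:size}) uses nothing about $c_{\theta_0}$ beyond its membership in $K(\theta_0)$), after which dividing the two instances of (\ref{eq:size}) gives $C={C'}^2$. The alternative chain-rule argument you sketch is also valid (up to an irrelevant sign in the telescoping sum) and is the classical bounded-distortion computation; it is self-contained but not what the paper relies on.
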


Notice that the limit geometries depend on the choice of the base point $c_{\theta_0}$, because the maps $\Psi^{\ul{\theta}}_n$ depend on it. However, the corollary just above shows for different choices of base point, the norm of the expansion factor of $\Phi_{\ul{\theta}_n}$ is bounded between $|C^{-1} \cdot Df_{\ul{\theta}_n}(c_{\theta_0})|$ and $|C \cdot  Df_{\ul{\theta}_n}(c_{\theta_0})|$ for a fixed choice of $c_{\theta_0}$. Since these maps also send the base point to $0$ we have that different choices of base points $c_1$ and $c_2$ result in different limit geometries that are related by

$$ k^{\ul{\theta}}_1 = A \cdot k^{\ul{\theta}}_2 ,$$

where $A$ is a map in $Aff(\C)$ bounded by some constant $C > 0$. So, less (bounded) affine transformations, the limit geometries do not depend on the base point. Every time we mention the limit geometries of a Cantor set consider that a set of base points has been already fixed. Also we could choose to define $\Phi_{\ul{\theta}_n}$ as the affine map such that $\Phi_{\ul{\theta}_n}( c_{\ul{\theta}_n})=0$, $ |D\Phi_{\ul{\theta}_n}| =\text{diam}( G^*(\ul{\theta}_n)) $ and $D\Phi_{\ul{\theta}_n}(c_{\theta_0})(1,0) \in \R \subset \R^2$ and the resulting limit geometries would only differ from those defined as above by bounded affine transformations. This may be the definition on some other sources.

The bounded distortion property can be improved:

\begin{corollary} There is a constant $C> 0$ such that for every pair of points $x,\,y  \in G^*(\theta_0)$ 

$$ \frac{|Df_{\ul{a}}(x)|}{m(Df_{\ul{a}}(y))}  \leq C \text{ and } C^{-1} \leq \frac{m(Df_{\ul{a}}(x))} {|Df_{\ul{a}}(y)|}$$

for all $ \ul{a}=(a_0,a_1, \dots, a_n) \in \Sigma^{fin}$. The larger $n$ is and the closer $x, y$ are to each other, the closer the ratios $\frac{|Df_{\ul{a}}(x)|}{|Df_{\ul{a}}(y)|}$ and $\frac{m(Df_{\ul{a}}(x))}{m(Df_{\ul{a}}(y))} $ are to $1$.

\end{corollary}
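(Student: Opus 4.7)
The plan is to reduce the statement to the uniform $C^{1+\varepsilon}$ estimates already proved for the limit geometries in Lemma \ref{limgeo}. Given $\ul{a}=(a_0,\dots,a_n)\in\Sigma^{fin}$, I would pick any $\ul{\theta}\in\Sigma^-$ whose last $n+1$ symbols (after re-indexing) give $\ul{a}$, so that by construction $f_{\ul{a}} = \Phi_{\ul{\theta}_n}^{-1}\circ k^{\ul{\theta}}_n$. Since $\Phi_{\ul{\theta}_n}$ is an affine map with complex-linear linear part, $D\Phi_{\ul{\theta}_n}^{-1}$ is conformal, i.e., multiplication by a single complex scalar. For a conformal operator $C$ and any linear $L$ on $\R^2\equiv\C$ one has $|CL|=|C|\cdot|L|$ and $m(CL)=|C|\cdot m(L)$, so applying this to $Df_{\ul{a}}(x)=D\Phi_{\ul{\theta}_n}^{-1}\cdot Dk^{\ul{\theta}}_n(x)$ yields
\[
\frac{|Df_{\ul{a}}(x)|}{m(Df_{\ul{a}}(y))}=\frac{|Dk^{\ul{\theta}}_n(x)|}{m(Dk^{\ul{\theta}}_n(y))},\qquad \frac{m(Df_{\ul{a}}(x))}{|Df_{\ul{a}}(y)|}=\frac{m(Dk^{\ul{\theta}}_n(x))}{|Dk^{\ul{\theta}}_n(y)|}.
\]
All reference to $\ul{a}$ and the affine rescaling $\Phi_{\ul{\theta}_n}$ has disappeared.

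Next, I would invoke the uniform estimates from the proof of Lemma \ref{limgeo}, where it is shown that there exist constants independent of $n$ and $\ul{\theta}$ bounding $|Dk^{\ul{\theta}}_n|$ and $|D(k^{\ul{\theta}}_n)^{-1}|$ above. Using the identity $m(A)=|A^{-1}|^{-1}$ for invertible $A$, the second bound is equivalent to a uniform positive lower bound for $m(Dk^{\ul{\theta}}_n)$. Hence $|Dk^{\ul{\theta}}_n(x)|$ and $m(Dk^{\ul{\theta}}_n(y))$ lie in a fixed compact subinterval of $(0,\infty)$, and their ratios are controlled above and below by a single constant $C$, proving the main bound.

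For the refinement, I would combine two ingredients already present in the proof of Lemma \ref{limgeo}. First, the Hölder constant $H_n$ of $Dk^{\ul{\theta}}_n$ is uniformly bounded (inequality $H_n\leq H$ in the proof), so $|Dk^{\ul{\theta}}_n(x)-Dk^{\ul{\theta}}_n(y)|\leq H|x-y|^\varepsilon$; dividing by the uniform lower bound for $m(Dk^{\ul{\theta}}_n)$ gives both ratios equal to $1+O(|x-y|^\varepsilon)$, uniformly in $n,\ul{\theta}$. Second, $k^{\ul{\theta}}_n\to k^{\ul{\theta}}$ in $C^{1+\varepsilon}$ uniformly in $\ul{\theta}$, and on points of $K(\theta_0)$ the derivatives $Df_{\ul{\theta}_n}$ are conformal (by conformality of $g$ on $K$), so $|Dk^{\ul{\theta}}_n|=m(Dk^{\ul{\theta}}_n)$ on $K(\theta_0)$ and the discrepancy between $|\cdot|$ and $m(\cdot)$ shrinks at nearby points as $n$ grows.

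The only delicate point is the interplay between the sup-norm and the conorm for matrices that are not exactly conformal when the base point lies in $G^*(\theta_0)\setminus K(\theta_0)$. The observation that keeps the argument clean is that $\Phi_{\ul{\theta}_n}$ is exactly conformal for every finite word: all of the non-conformal defect is concentrated in $k^{\ul{\theta}}_n$, where it is already quantitatively controlled by the Hölder data and by the $C^{1+\varepsilon}$-convergence to the limit geometry.
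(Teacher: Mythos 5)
Your proof is correct and follows essentially the same route as the paper: factor $Df_{\ul{a}}(x)=(D\Phi_{\ul{\theta}_n})^{-1}\,Dk^{\ul{\theta}}_n(x)$, use that the conformal constant $D\Phi_{\ul{\theta}_n}$ cancels exactly in the ratios, and then invoke the uniform estimates from Lemma~\ref{limgeo}. Your write-up is in fact a bit more explicit than the paper's terse version (which contains the garbled display $Df_{\ul{a}}(p)(Df_{\ul{a}}(p))^{-1}$, literally the identity): you apply the uniform bounds on $||Dk^{\ul{\theta}}_n||$, $||(Dk^{\ul{\theta}}_n)^{-1}||$ and the uniform Hölder constant $H_n\leq H$ directly, whereas the paper routes through the convergence estimate $\log|D(k^{\ul{\theta}}\circ(k^{\ul{\theta}}_n)^{-1})|\leq C_9\, r_{\ul{\theta}_n}^{\varepsilon}$ to the limit $k^{\ul{\theta}}$; your ``second ingredient'' (conformality of $Df_{\ul{\theta}_n}$ on $K$) is not actually needed for the two stated same-type ratios, which are already handled by the Hölder bound alone, but it does no harm.
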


\begin{proof}
    Given any $\ul{\theta}^-$ whose final coincides with the word $\ul{a}$, by the proof of  lemma \ref{limgeo}, we have $\log \,\, | D \left(k^{\ul{\theta}} \circ ({k^{\ul{\theta}}_n)}^{-1}\right)| \,\, \leq C_9 \cdot r_{{\ul{\theta}}_n}^{\varepsilon} $ which implies

\[ \exp(- C \mu^{-n\varepsilon}) \leq m\left(Df_{\ul{a}}(p)(Df_{\ul{a}}(p))^{-1}\right)  \text{ and } |Df_{\ul{a}}(p)(Df_{\ul{a}}(p))^{-1}| \leq \exp(C \mu^{-n\varepsilon})\]

for any $p \in G^*(\theta_0)$. In view of the Hölder continuity of $\ul{\theta} \mapsto k^{\ul{\theta}}$, making $n$ large and $|x-y|$ small the proof is complete.

\end{proof}
We also have the following result:

\begin{corollary}\label{diam}
The diameter of the sets $G^*(\ul{\theta}_n)$ is of order $||Df_{\ul{\theta}_n}||$. 
\end{corollary}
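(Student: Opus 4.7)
The plan is to combine equation \ref{eq:size} from the proof of Lemma \ref{limgeo} with the bounded distortion property (the preceding Corollary) to bridge the base-point evaluation $|Df_{\ul{\theta}_n}(c_{\theta_0})|$ and the sup norm $||Df_{\ul{\theta}_n}||$.

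First I would recall that equation \ref{eq:size} already supplies the bound
\[
{C'}^{-1} \cdot |Df_{\ul{\theta}_n}(c_{\theta_0})| \leq \text{diam}(G^*(\ul{\theta}_n)) \leq C' \cdot |Df_{\ul{\theta}_n}(c_{\theta_0})|,
\]
so the whole problem is to compare $|Df_{\ul{\theta}_n}(c_{\theta_0})|$ with $||Df_{\ul{\theta}_n}|| = \sup_{x \in G^*(\theta_0)} |Df_{\ul{\theta}_n}(x)|$. The trivial direction $|Df_{\ul{\theta}_n}(c_{\theta_0})| \leq ||Df_{\ul{\theta}_n}||$ is immediate. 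For the reverse direction, apply the bounded distortion Corollary with $y = c_{\theta_0}$ and arbitrary $x \in G^*(\theta_0)$ to obtain $|Df_{\ul{\theta}_n}(x)| \leq C \cdot m(Df_{\ul{\theta}_n}(c_{\theta_0})) \leq C \cdot |Df_{\ul{\theta}_n}(c_{\theta_0})|$; taking the supremum over $x$ yields $||Df_{\ul{\theta}_n}|| \leq C \cdot |Df_{\ul{\theta}_n}(c_{\theta_0})|$.

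Chaining these two comparisons gives $\text{diam}(G^*(\ul{\theta}_n)) \asymp ||Df_{\ul{\theta}_n}||$, which is the claim. There is essentially no obstacle here — the bounded distortion corollary is exactly what is needed, and since $g$ is conformal on $K$ the norm $|Df_{\ul{a}}|$ and the conorm $m(Df_{\ul{a}})$ coincide at points of the Cantor set, so the distortion estimate transfers cleanly to the operator norm on all of $G^*(\theta_0)$. The only minor care needed is that the base points $c_{\theta_0}$ lie in $K(\theta_0)$, where conformality holds, so that the inequality $m(Df_{\ul{\theta}_n}(c_{\theta_0})) \leq |Df_{\ul{\theta}_n}(c_{\theta_0})|$ is in fact an equality and the distortion estimate is applied at a point where the derivative is a genuine complex scalar.
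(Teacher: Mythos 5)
The paper states Corollary~\ref{diam} without explicit proof, leaving it as an immediate consequence of equation~\ref{eq:size} and the bounded distortion corollary that precedes it; your argument reconstructs precisely this intended chain (diameter $\asymp |Df_{\ul{\theta}_n}(c_{\theta_0})|$ from \ref{eq:size}, then $|Df_{\ul{\theta}_n}(c_{\theta_0})| \asymp \|Df_{\ul{\theta}_n}\|$ via distortion). The observation that $c_{\theta_0}\in K$ so the conorm equals the norm there is correct and is the right way to close the loop; your proof is correct and matches the approach the paper expects.
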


We end this section with the following lemma, that shows continuous dependence of limit geometries on the map $g$ defining the Cantor set.

\begin{lemma} \label{continuous} For any Cantor set $K$ given by a map $g \in \Om_{\Sigma}$ and any $\varepsilon > 0$, there is a small $\delta > 0$ such that for any map $\ti{g} \in U_{K, \delta}$ there is a choice of points $\ti{c}_a \in \ti{G}(a)$ sufficiently close to the points $c_a \in G(a)$ respectively in a manner that the resultant limit geometries satisfy $||\ti{k}^{\ul{\theta}}-{k}^{\ul{\theta}}||_{C^1} < \varepsilon $ for all $\ul{\theta}\in\Sigma^- $ in the largest domain both these maps are defined.
\end{lemma}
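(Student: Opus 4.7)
The strategy is a standard three-epsilon argument: split $\tilde{k}^{\ul{\theta}} - k^{\ul{\theta}}$ at a large common truncation index $N$ and control the tails using the uniform convergence already established in Lemma \ref{limgeo}, while controlling the finite-stage difference using continuous dependence of finitely many inverse branches of $g$ on $g$ itself.

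First, I would fix the truncation. The final remark in the proof of Lemma \ref{limgeo} states that the constants governing the bound $\|\Psi^{\ul{\theta}}_n - \mathrm{Id}\|_{C^1} \leq C_3\,\mu^{-n\varepsilon}$ depend only on the $C^{1+\varepsilon}$ norm of $g$, and hence vary continuously with $g$; the same is therefore true for $\tilde{g}$ in a small neighborhood $U_{K,\delta_0}$. Choose $N$ large enough that
\[
\sum_{j > N} C_3 \,\mu^{-j\varepsilon} \cdot \bigl(\sup_{n} \|Dk^{\ul{\theta}}_n\|\bigr) \,<\, \varepsilon/3
\]
holds uniformly for every $\ul{\theta}\in\Sigma^-$ and for every $\tilde{g}\in U_{K,\delta_0}$. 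Then for any such $\tilde{g}$, using the same estimates as in the proof of Lemma \ref{limgeo}, we get $\|\tilde{k}^{\ul{\theta}} - \tilde{k}^{\ul{\theta}}_N\|_{C^1} < \varepsilon/3$ and similarly $\|k^{\ul{\theta}} - k^{\ul{\theta}}_N\|_{C^1} < \varepsilon/3$ for every $\ul{\theta}\in\Sigma^-$.

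Next I handle the finite-stage comparison. Since the alphabet $\mathbb{A}$ is finite, there are only finitely many admissible words $\ul{a}$ of length $N$, so $k^{\ul{\theta}}_N$ depends on $\ul{\theta}$ through finitely many cases, namely the choice of $\ul{\theta}_N\in\Sigma^{fin}$ of size $N$. For each such $\ul{a}=(a_0,\ldots,a_{-N})$, the map $k^{\ul{a}}_N = \Phi_{\ul{a}}\circ f_{\ul{a}}$ is built by composing $N$ inverse branches of $g$ at fixed admissible pairs and post-composing with an affine normalization depending on the base point $c_{a_0}$ and on the derivative $Df_{\ul{a}}(c_{a_0})$. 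Each inverse branch $\tilde{g}|_{\tilde{G}(a)}^{-1}$ is $C^{1+\varepsilon}$ close to $g|_{G(a)}^{-1}$ on the common domain as $\tilde{g}\in U_{K,\delta}$ with $\delta$ small, and this closeness is preserved under composition of $N$ maps (since $N$ is now fixed). The only remaining ingredient is the choice of the base points $\tilde{c}_a$. Using the topological conjugacy $h:K\to\Sigma$ and its analog $\tilde{h}:\tilde{K}\to\Sigma$, I would define $\tilde{c}_a := \tilde{h}^{-1}(h(c_a))$, which lies in $\tilde{K}(a)$ and converges to $c_a$ as $\tilde{g}\to g$ (this continuity follows from the Hausdorff convergence $\tilde{K}\to K$, which is standard and can be extracted from the exponential decay of the sizes $\mathrm{diam}(G^*(\ul{a}))$). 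With this choice $\tilde{\Phi}_{\tilde{\ul{a}}}$ is close to $\Phi_{\ul{a}}$. Shrinking $\delta$ if necessary, we obtain $\|\tilde{k}^{\ul{\theta}}_N - k^{\ul{\theta}}_N\|_{C^1} < \varepsilon/3$ uniformly in $\ul{\theta}$, since only finitely many length-$N$ truncations matter.

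Combining the three estimates via the triangle inequality gives $\|\tilde{k}^{\ul{\theta}} - k^{\ul{\theta}}\|_{C^1} < \varepsilon$ on the intersection of the two domains of definition. The main potential difficulty is verifying that the finite-stage continuity really is uniform in $\ul{\theta}$; this is immediate here only because the alphabet is finite, so the length-$N$ truncation takes only finitely many values, and because the conjugacy argument guarantees that the base points $\tilde{c}_a$ can be chosen in $\tilde{K}(a)$ and close to $c_a$ simultaneously. If one wanted a quantitative bound on $\delta$, the delicate step would be tracking how fast $\tilde{c}_a \to c_a$ when $\tilde{g}\to g$, but for the qualitative statement asserted in the lemma the continuity suffices.
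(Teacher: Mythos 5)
Your proposal is correct, and it takes a recognizably different route from the paper's. The paper's proof compares $\tilde{k}^{\ul{\theta}}_n$ and $k^{\ul{\theta}}_n$ directly at every stage: it sets $x_n = \|\tilde{k}_n^{\ul{\theta}} - k_n^{\ul{\theta}}\|$, observes that $x_N$ can be made small for any fixed $N$ by taking $\delta$ small, and then establishes the recursive inequality $x_{n+1}\leq (1+\tilde{C}_1\lambda^n)x_n + \tilde{C}_2\lambda^n$, from which an explicit elementary estimate $x_m \leq e^{\tilde{C}_3\lambda^N}(x_N + \tilde{C}_4\lambda^N)$ controls the accumulated error for all $m\geq N$, before passing to the limit. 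You instead apply the triangle inequality $\|\tilde{k}^{\ul{\theta}}-k^{\ul{\theta}}\| \leq \|\tilde{k}^{\ul{\theta}}-\tilde{k}_N^{\ul{\theta}}\| + \|\tilde{k}_N^{\ul{\theta}}-k_N^{\ul{\theta}}\| + \|k_N^{\ul{\theta}}-k^{\ul{\theta}}\|$ and reuse the already-established uniformity of the Cauchy estimates from Lemma \ref{limgeo} (which the paper notes holds on a neighborhood of $g$) to kill the two tails, leaving only the finite-stage term. This is more modular: it avoids re-deriving a recursive bound for the difference and instead leans on the convergence theorem twice. The tradeoff is that your argument depends a bit more heavily on the remark at the end of Lemma \ref{limgeo} about uniformity of constants over a neighborhood of $g$, which is stated somewhat informally there; the paper's recursion-based argument is self-contained in that respect. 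Both handle the base point via the conjugacy ($\tilde{c}_a = \tilde{h}^{-1}(h(c_a))$, exactly as in the paper), and both need to keep track of the shrinking common domain — the paper does this explicitly when justifying that $\Psi^{\ul{\theta}}_{n+1}$ can be applied to the image of $\tilde{k}^{\ul{\theta}}_n$, and you implicitly acknowledge it with the "largest common domain" caveat at the end. Your plan would pass with that domain bookkeeping made explicit.
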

\begin{proof}
First we fix $c_a \in \text{int} \, G(a)$. This can be done because of the additional hypothesis on the sets $G(a)$, described at the end of section \ref{first}. By the definition of $U_{K,\delta'}$, we can choose $\ti{c}_a$ such that $H(c_a)= \ti{H}(\ti{c}_a)$, and that implies $|\ti{c}_a-c_a|< \delta$ for every $\delta'$ sufficiently small. Let us analize the limit of $||\ti{k}_n^{\ul{\theta}}-{k}_n^{\ul{\theta}}||_{C^0}$.

Define $x_n =  ||\ti{k}_{n}^{\ul{\theta}}-{k}_{n}^{\ul{\theta}}||_{C^0}$ for $n \in \N$. Given $\varepsilon_0>0$ and $N \in \N$ if the distance between $k_1^{\ul{\theta}}$ and $\ti{k}_1^{\ul{\theta}}$ is small enough then, by continuity, $x_n = ||\ti{k}_n^{\ul{\theta}}-{k}_n^{\ul{\theta}}||_{C^0} < \varepsilon_0$ for all $n \leq N$. We will prove by induction that if $n\geq N$ then there are constants $\ti{C}_1 \text{ and } \ti{C}_2$ such that 
 \begin{equation} \label{eq:series}
      x_{n+1}\leq (1+\ti{C}_1\lambda^n)x_n + \ti{C}_2 \lambda^n
 \end{equation}
 for some $0<\lambda<1$ and for all $n \geq N$ and we can apply $\Psi_{n+1}^{\ul{\theta}}$ to the image of $\ti{k}_n^{\ul{\theta}}$. It is not true that we can always apply $\Psi_{N+1}^{\ul{\theta}}$ to the image of $\ti{k}_N^{\ul{\theta}}$; at least not considering the same domain $\text{Im}(k^{\ul{\theta}_N})$ used in the definition at the beginning of the proof of lemma \ref{limgeo}. However, this domain can be extended to a larger set $V_{\varepsilon_1}(\text{Im}(k^{\ul{\theta}_N}))$, for some $\varepsilon_1>0$, simply because 
 
 $$\Psi^{\ul{\theta}}_n= \Phi_{\ul{\theta}_n} \circ f_ {\left(\theta_{-n}, \theta{-n+1}\right)}\circ \Phi_{\ul{\theta}_{n-1}}^{-1}.$$
is well defined on $\Phi_{\ul{\theta}_{n-1}} (G(\theta_{-n+1})) \supset \Phi_{\ul{\theta}_{n-1}} (G(\ul{\theta}_{-n+1})) = \text{Im}(k^{\ul{\theta_n}})$. So by making $N=n$ and observing that we can make $x_N \leq \varepsilon_0$ the first part of the basis of induction is proved. Now, if we can apply $\Psi_{n+1}^{\ul{\theta}}$ to the image of $\ti{k}_n^{\ul{\theta}}$ for some $n \geq N$ we have, following the notation in the proof of lemma \ref{limgeo}, that:

\begin{align} 
\begin{split} \label{eq:est}
    |\ti{k}_{n+1}^{\ul{\theta}}(x)-{k}_{n+1}^{\ul{\theta}}(x)| 
    & = |(\ti{\Psi}_{n+1}^{\ul{\theta}}\circ\ti{k}_{n}^{\ul{\theta}})(x)-({\Psi}_{n+1}^{\ul{\theta}}\circ{k}_{n}^{\ul{\theta}})(x)|\\ 
    & \leq |\Psi_{n+1}^{\ul{\theta}}(\ti{k}_{n}^{\ul{\theta}}(x))-{\Psi}_{n+1}^{\ul{\theta}}({k}_{n}^{\ul{\theta}}(x))| + |(\ti{\Psi}_{n+1}^{\ul{\theta}}- {\Psi}_{n+1}^{\ul{\theta}})(\ti{k}_{n}^{\ul{\theta}}(x))| \\ 
    & \leq ||D{\Psi}_{n+1}^{\ul{\theta}}||\cdot|\ti{k}_{n}^{\ul{\theta}}(x)-{k}_{n}^{\ul{\theta}}(x)| + || \ti{\Psi}_{n+1}^{\ul{\theta}}- {\Psi}_{n+1}^{\ul{\theta}} || \\ 
    & \leq (1+C_4 \mu^{-n \varepsilon}) \cdot|\ti{k}_{n}^{\ul{\theta}}(x)-{k}_{n}^{\ul{\theta}}(x)| + 2 C_3 \mu^{-n \varepsilon}
\end{split}
\end{align}

which yields a estimative of the type (\ref{eq:series}), and so, the induction basis is complete. However, with basic real analysis, one can show that there are constants $\ti{C}_3$ and $\ti{C}_4$ such that, if (\ref{eq:series}) is true for all $n, \, N\leq n\leq m,\,m \in \N$, then

\[x_m \leq \euler^{\ti{C}_3\lambda^N} (x_N + \ti{C}_4 \lambda^N),\]

and so $x_m, \, m \geq N,$ can be made arbitrarily small by choosing $N$ sufficiently large (and consequently the distance between $k_1^{\ul{\theta}}$ and $\ti{k}_1^{\ul{\theta}}$). This is enough to show that we can apply $\Psi_{m+1}^{\ul{\theta}}$ to the image of $\ti{k}_m^{\ul{\theta}}$ and to do the induction step.

Passing to the limit we show that \[||\ti{k}^{\ul{\theta}}-{k}^{\ul{\theta}}||_{C^0}<\varepsilon\] 
provided that the distance between $k_1^{\ul{\theta}}$ and $\ti{k}_1^{\ul{\theta}}$ is small enough, but this is controlled by the difference between $f_{\theta_{-1},\theta_0}$ and $\ti{f}_{\theta_{-1},\theta_0}$, which can be make small enough by choosing $\delta'>0$ sufficiently small. 

The argument for the $C^{1}$ norm is similar:
\begin{small}
\begin{align*}
    |D\ti{k}_{n+1}^{\ul{\theta}}(x)-D{k}_{n+1}^{\ul{\theta}}(x)| & = |D(\ti{\Psi}_{n+1}^{\ul{\theta}}\circ\ti{k}_{n}^{\ul{\theta}})(x)-D({\Psi}_{n+1}^{\ul{\theta}}\circ{k}_{n}^{\ul{\theta}})(x)|\\ & \leq |D(\ti{\Psi}_{n+1}^{\ul{\theta}}\circ\ti{k}_{n}^{\ul{\theta}})(x)-D(\ti{\Psi}_{n+1}^{\ul{\theta}}\circ{k}_{n}^{\ul{\theta}})(x)| +  |D(\ti{\Psi}_{n+1}^{\ul{\theta}}\circ{k}_{n}^{\ul{\theta}})(x)-D({\Psi}_{n+1}^{\ul{\theta}}\circ{k}_{n}^{\ul{\theta}})(x)| \\ & \leq ||D{\Psi}_{n+1}^{\ul{\theta}}||\cdot|D\ti{k}_{n}^{\ul{\theta}}(x)-D{k}_{n}^{\ul{\theta}}(x)| +  || D\ti{\Psi}_{n+1}^{\ul{\theta}}- D{\Psi}_{n+1}^{\ul{\theta}} || \cdot |Dk_n ^{\ul{\theta}}(x)| \\ & \leq (1+C_4 \mu^{-n \varepsilon}) \cdot|D\ti{k}_{n}^{\ul{\theta}}(x)-D{k}_{n}^{\ul{\theta}}(x)| + 2 {C_4} \euler^{C_5} \mu^{-n \varepsilon}
\end{align*}
\end{small}

and proceeding as above the proof is complete.
\end{proof}

\subsection{Configurations and Renormalizations}\label{attr}

\begin{definition}
Given a dynamically defined conformal Cantor set $K$, described by $ (\mathrm{A},\mathcal{B},\Sigma, g) $ and a piece $G(a),\, a \in \mathrm{A},$ we say that a $C^r, \, r > 1$ diffeomorphism $h: G(a) \to U \subset\C $ is a \textit{configuration} of the piece of Cantor set. 
\end{definition}

In particular, if $h$ is the restriction of a map $A \in Aff(\C)$ to its domain $G(a)$ then we say it is an \emph{affine configuration}. We write $\mathcal{P}(a)$ for the space of all configurations of the piece $G(a)$ equipped with the $C^r$ topology.
 
The space $Aff(\C)$ acts on $\mathcal{P}(a)$ by left composition and we denote the quotient space of this action $\ov{\mathcal{P}}(a)$. We also refer to $\mathcal{P}$ as the union $\bigcup_{a \in \mathrm{A}}\mathcal{P}(a)$ and $\ov{\mathcal{P}}=\bigcup_{a \in \mathrm{A}}\ov{\mathcal{P}}(a)$.

Configurations can be seen as the manner in which the Cantor set is embedded into the complex plane. For example, by using an affine configuration we can rotate, scale and translate a Cantor set that would be fixed in a certain region of the plane. Also, if $h: \bigsqcup_{a \in \mathrm{A}} G(a) \to U \subset \C$ is a $C^r$ diffeomorphism such that $Dh$ is conformal at the Cantor set $K$, then $h(K)$ can be seem as a Cantor set in the previous sense. To see this we need only to consider  new sets $\ti{G}(a)=h(G(a))$ and $\ti{g} = h \circ g \circ h^{-1}$. 

\begin{definition}For any given configuration $h$ we say that $h \circ f_{(\theta_0, \theta_1)}$ is the \textit{renormalization} by $ f_{(\theta_0, \theta_1)}$ of the given configuration and we write the renormalization operator as
\[T_{\theta_0,\theta_1 } : \mathcal{P}(\theta_0) \to \mathcal{P}(\theta_1) \]
\[h \mapsto h \circ f_{(\theta_0,\theta_1)}.\]

\end{definition}
Since this operator commutes with the action of affine maps over $\mathcal{P}$ it is well defined over $\ov{\mathcal{P}}$.

If we apply $n$ consecutive renormalizations, by $f_{(\theta_{0},\theta_1) }, \dots  \,,  f_{(\theta_{n-2}, \theta_{n-1})} $  , $  f_{(\theta_{n-1 }, \theta_n)} $ we end up with $h \circ f_{\ul{\theta}_n}$, $\ul{\theta}_n= (\theta_0,\theta_1, \dots,\theta_n)$. Based on that, we define for any word $\ul{a} \in \Sigma_n, \, \ul{a} = (a_0, \dots, a_n)$ the renormalization operator operator as:

\[T_{\ul{a}}: G(a_0) \to G(a_n)\]
\[h \mapsto h \circ f_{\ul{a}}\]

This construction implies that $T_{\ul{a}}\circ T_{\ul{b}}=T_{\ul{a} \ul{b}}$ for every pair of words $a, \, b \in \Sigma^{fin }$.

Notice that the image of $h \circ f_{\ul{\theta}_n}$ corresponds to the image by $h$ of the set $G(\ul{\theta}_n)$, that is, the configuration of a piece of the $n$-th step in the definition of the Cantor set K, and, as seen in the lemma \ref{limgeo} and its proof, this map is close to $h \circ (\Phi_{\ul{\theta}_n})^{-1} \circ k^{\ul{\theta}}$. This observation indicates that the limit geometries work as an atractor in the space of configurations under the action of renormalizations (less affine transformations). The next two lemmas give a more precise statement of this fact.

First, consider the space $\mathcal{A} = Aff(\C) \times \Sigma^- $. It represents the affine configurations of limit geometries and can be continuously associated with a subset of the space of configurations by:
\[I : Aff(\C) \times \Sigma^- \to \mathcal{P} \]
\[ (A,\theta) \mapsto A \circ k^{\ul{\theta}}\]
Notice that this identification is continuous.

\begin{lemma} \label{afrenor} The action of the renormalization operator over $\mathcal{A}$ is given by: 

\[T_{\theta_0,\theta_1}(A, \ul{\theta}) = (A \circ F^{\ul{\theta}\theta_1},\ul{\theta}\theta_1 )\]

where $F^{\ul{\theta}\theta_1}$ is in $Aff(\C)$.

\end{lemma}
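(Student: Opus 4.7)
The plan is to compute directly what happens to an element $(A, \ul{\theta}) \in \mathcal{A}$ under $T_{\theta_0,\theta_1}$, first at the finite level of the $k^{\ul{\theta}}_n$'s and then pass to the limit. Unraveling the identification $I$, the image of $(A, \ul{\theta})$ is $A \circ k^{\ul{\theta}}$ and
$$T_{\theta_0,\theta_1}(A \circ k^{\ul{\theta}}) = A \circ k^{\ul{\theta}} \circ f_{(\theta_0,\theta_1)}.$$
To place this back in the form $A' \circ k^{\ul{\theta}'}$ with $\ul{\theta}' = \ul{\theta}\theta_1$, it suffices to produce $F^{\ul{\theta}\theta_1} \in \mathrm{Aff}(\C)$ with $k^{\ul{\theta}} \circ f_{(\theta_0,\theta_1)} = F^{\ul{\theta}\theta_1} \circ k^{\ul{\theta}\theta_1}$, and then set $A' = A \circ F^{\ul{\theta}\theta_1}$.

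At the finite level, using $k^{\ul{\theta}}_n = \Phi_{\ul{\theta}_n} \circ f_{\ul{\theta}_n}$ and the composition identity $f_{\ul{\theta}_n} \circ f_{(\theta_0,\theta_1)} = f_{(\ul{\theta}\theta_1)_{n+1}}$, I rewrite
$$k^{\ul{\theta}}_n \circ f_{(\theta_0,\theta_1)} = \bigl(\Phi_{\ul{\theta}_n} \circ \Phi_{(\ul{\theta}\theta_1)_{n+1}}^{-1}\bigr) \circ k^{\ul{\theta}\theta_1}_{n+1},$$
so $F^{\ul{\theta}\theta_1}_n := \Phi_{\ul{\theta}_n} \circ \Phi_{(\ul{\theta}\theta_1)_{n+1}}^{-1}$ is visibly affine. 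The whole argument will work once I show that $F^{\ul{\theta}\theta_1}_n$ converges in $\mathrm{Aff}(\C)$; this is the main step.

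For the convergence, set $c'_0 := f_{(\theta_0,\theta_1)}(c_{\theta_1}) \in K(\theta_0)$, so $c_{(\ul{\theta}\theta_1)_{n+1}} = f_{\ul{\theta}_n}(c'_0)$. Writing the $\Phi$'s explicitly as $\Phi_{\ul{b}}(z) = Df_{\ul{b}}(c_{b_{\text{end}}})^{-1}(z - c_{\ul{b}})$ and applying the chain rule $Df_{(\ul{\theta}\theta_1)_{n+1}}(c_{\theta_1}) = Df_{\ul{\theta}_n}(c'_0) \cdot Df_{(\theta_0,\theta_1)}(c_{\theta_1})$, a direct calculation gives
\begin{align*}
\text{linear part of } F^{\ul{\theta}\theta_1}_n &= Df_{\ul{\theta}_n}(c_{\theta_0})^{-1} \cdot Df_{\ul{\theta}_n}(c'_0) \cdot Df_{(\theta_0,\theta_1)}(c_{\theta_1}) = Dk^{\ul{\theta}}_n(c'_0) \cdot Df_{(\theta_0,\theta_1)}(c_{\theta_1}),\\
F^{\ul{\theta}\theta_1}_n(0) &= Df_{\ul{\theta}_n}(c_{\theta_0})^{-1}\bigl(f_{\ul{\theta}_n}(c'_0) - f_{\ul{\theta}_n}(c_{\theta_0})\bigr) = k^{\ul{\theta}}_n(c'_0).
\end{align*}
By Lemma \ref{limgeo}, $k^{\ul{\theta}}_n \to k^{\ul{\theta}}$ and $Dk^{\ul{\theta}}_n \to Dk^{\ul{\theta}}$, so both the linear part and the translation of $F^{\ul{\theta}\theta_1}_n$ converge; moreover the linear part is conformal (a complex multiplication) because $c'_0, c_{\theta_1} \in K$ and $g$ is conformal on $K$. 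Thus $F^{\ul{\theta}\theta_1}_n$ converges to the affine map
$$F^{\ul{\theta}\theta_1}(w) = k^{\ul{\theta}}(c'_0) + Dk^{\ul{\theta}}(c'_0) \cdot Df_{(\theta_0,\theta_1)}(c_{\theta_1}) \cdot w.$$

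Finally, passing to the limit in $k^{\ul{\theta}}_n \circ f_{(\theta_0,\theta_1)} = F^{\ul{\theta}\theta_1}_n \circ k^{\ul{\theta}\theta_1}_{n+1}$ (justified by the uniform $C^1$-convergence of the limit geometries) yields $k^{\ul{\theta}} \circ f_{(\theta_0,\theta_1)} = F^{\ul{\theta}\theta_1} \circ k^{\ul{\theta}\theta_1}$, hence $T_{\theta_0,\theta_1}(A,\ul{\theta}) = (A \circ F^{\ul{\theta}\theta_1}, \ul{\theta}\theta_1)$, as claimed. The only non-routine point is the convergence of $F^{\ul{\theta}\theta_1}_n$; but once one identifies its linear part as $Dk^{\ul{\theta}}_n(c'_0) \cdot Df_{(\theta_0,\theta_1)}(c_{\theta_1})$ and its translation as $k^{\ul{\theta}}_n(c'_0)$, this reduces to the already-established convergence in Lemma \ref{limgeo}.
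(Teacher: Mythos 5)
Your proof is correct and follows essentially the same route as the paper: both reduce $F^{\ul{\theta}\theta_1}$ (or its inverse) to $\lim_n \Phi_{\ul{\theta}_n} \circ \Phi_{(\ul{\theta}\theta_1)_{n+1}}^{-1}$ and invoke the $C^1$ convergence from Lemma \ref{limgeo}. The only difference is cosmetic: the paper concludes affineness abstractly from the closedness of $\mathrm{Aff}(\C)$ in the space of configurations, whereas you verify convergence of the linear and translation parts directly and thereby extract the explicit formula $F^{\ul{\theta}\theta_1}(w) = k^{\ul{\theta}}(c'_0) + Dk^{\ul{\theta}}(c'_0)\,Df_{(\theta_0,\theta_1)}(c_{\theta_1})\,w$, which is a small bonus not recorded in the paper.
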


\begin{proof} 
From the lemma \ref{limgeo}, in which we established the existence of limit geometries, we have that:
\begin{align*} k^{\ul{\theta}\theta_1} \circ (k^{\ul{\theta}}\circ f_{\theta_0, \theta_1})^{-1} & = \lim_{n \to \infty} k^{\ul{\theta}\theta_1}_{n+1} \circ (k^{\ul{\theta}}_{n}\circ f_{\theta_0, \theta_1})^{-1}  \\
& = \lim_{n \to \infty} \Phi_{{(\ul{\theta}\theta_1})_{n+1}} \circ f_{{(\ul{\theta}\theta_1)_{n+1}}} \circ (\Phi_{\ul{\theta}_n} \circ f_{{\ul{\theta}_n}}\circ f_{\theta_0, \theta_1})^{-1} \\
& = \lim_{n \to \infty} \Phi_{{(\ul{\theta}\theta_1})_{n+1}} \circ \Phi_{\ul{\theta}_n} ^{-1} 
\end{align*}
%\[ k^{\ul{\theta}}_n= \Phi_{\ul{\theta}_n} \circ f_{{\ul{\theta}_n}} \]

which implies that that the last limit exists and in particular belongs to $Aff(\C)$ since this is a closed subset of the space of configurations. So, for any $\ul{\theta} =  ( \dots, \theta_{-1}, \theta_0) \in \Sigma^- $ and $(\theta_0, \theta_1) \in B$ we define $(F^{\ul{\theta} \theta_1}) ^{-1} = \lim_{ n \to \infty} \Phi_{{(\ul{\theta}\theta_1})_{n+1}} \circ \Phi_{\ul{\theta}_n} ^{-1} $ and we have that $F^{\ul{\theta} \theta_1} \circ k^{\ul{\theta}\theta_1} =  k^{\ul{\theta}} \circ f_{\theta_0,\theta_1}$ as we wanted to show.

\end{proof}

\begin{definition}
 For each limit geometry $k^{\ul{\theta}}$,  $\ul{\theta} \in \Sigma^-$, and any configuration $h:G_{\theta_0}\to \C$ the map $h^{\ul{\theta}}: k^{\ul{\theta}}(G_{\theta_0}) \to \C$ is defined as  $h^{\ul{\theta}}= h \circ (k^{\ul{\theta}})^{-1}$, which we call the \textit{perturbation part of $h$ relative to $\ul{\theta}$}.  Also, for each configuration $h \in \mathcal{P}(a)$ we consider the \textit{scaled} version of it as the map $A_h \circ h$, where $A_h \in Aff(\R^2)$ is an affine transformation such that $ A_h \circ h (c_{\theta_0}) = 0  $ and $ D(A_h \circ h) (c_{\theta_0}) = \text{Id}$. 
 \end{definition}
 By definition, $h=h^{\ul{\theta}} \circ k^{\ul{\theta}}$. Also, for example, the scaled version of a limit geometry is the limit geometry itself.
 
 Given a finite word $\ul{a}_n$ of size $n$ with $a_0=a$ for some fixed $a \in \mathbb{A}$ and a configuration $h: G(a_0) \to \C$ we will denote  by $h_n$ the renormalization of $h$ by $\ul{a}_n$ in the next lemma.
 
\begin{lemma}\label{scale}
Let $K$ be a conformal Cantor set and $h \in \mathcal{P}(a_0)$ a configuration of a piece in $K$. Then, for every $n$ large enough and any limit geometry $\ul{\theta} \in \Sigma^-$ with $\theta_0=a_0$, the perturbation part of the scaled version of $h_n$ relative to $\ul{\theta}\ul{a}_n$ converges exponentially to the identity for any $\ul{\theta} \in \Sigma^- $. In other terms, $||A_{h_n} \circ h_n^{\ul{\theta}\ul{a}_n} -\text{Id}  || < C \cdot \text{diam}(G(\ul{a}_n))^{-n\varepsilon}< C \cdot \mu^{-n\varepsilon}$, $C> 0$ a constant depending only on the Cantor set $K$ and the initial configuration $h$.
\end{lemma}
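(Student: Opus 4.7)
The plan is to express $h_n^{\ul{\theta}\ul{a}_n}$ as a composition of the perturbation part $h^{\ul{\theta}}$ with a single affine map, and then extract the scaling via the Hölder expansion of $h^{\ul{\theta}}$ at the point corresponding to the base point $c_{\ul{a}_n}$. First I would iterate Lemma \ref{afrenor} to produce a sequence of affine transformations whose composition yields an affine map $F^{\ul{\theta}\ul{a}_n}\in Aff(\C)$ satisfying
\[
F^{\ul{\theta}\ul{a}_n}\circ k^{\ul{\theta}\ul{a}_n}=k^{\ul{\theta}}\circ f_{\ul{a}_n}.
\]
Substituting this into the definition of the perturbation part gives
\[
h_n^{\ul{\theta}\ul{a}_n}=h\circ f_{\ul{a}_n}\circ (k^{\ul{\theta}\ul{a}_n})^{-1}=h^{\ul{\theta}}\circ F^{\ul{\theta}\ul{a}_n},
\]
reducing everything to understanding a single affine map.

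Next I would compute $A_{h_n}$ explicitly using that $k^{\ul{\theta}\ul{a}_n}(c_{a_n})=0$ and $Dk^{\ul{\theta}\ul{a}_n}(c_{a_n})=\mathrm{Id}$ (by the normalization built into the construction of limit geometries). This gives $h_n^{\ul{\theta}\ul{a}_n}(0)=h(c_{\ul{a}_n})$ and $Dh_n^{\ul{\theta}\ul{a}_n}(0)=Dh(c_{\ul{a}_n})\cdot Df_{\ul{a}_n}(c_{a_n})$, so $A_{h_n}$ is a translation by $-h(c_{\ul{a}_n})$ followed by multiplication by $[Dh(c_{\ul{a}_n})\cdot Df_{\ul{a}_n}(c_{a_n})]^{-1}$; in particular $A_{h_n}\circ h_n^{\ul{\theta}\ul{a}_n}$ fixes $0$ with identity derivative at $0$.

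Then I would set $w=F^{\ul{\theta}\ul{a}_n}(z)$, $w_0=F^{\ul{\theta}\ul{a}_n}(0)=k^{\ul{\theta}}(c_{\ul{a}_n})$, and write the $C^{1+\varepsilon}$ expansion
\[
h^{\ul{\theta}}(w)-h^{\ul{\theta}}(w_0)=Dh^{\ul{\theta}}(w_0)\cdot(w-w_0)+R(w,w_0),\qquad |R|\le C|w-w_0|^{1+\varepsilon},
\]
where $Dh^{\ul{\theta}}$ is Hölder thanks to the $C^{1+\varepsilon}$ regularity of $h$ and of $k^{\ul{\theta}}$ from Lemma \ref{limgeo}. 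Chain rule shows $DF^{\ul{\theta}\ul{a}_n}=Dk^{\ul{\theta}}(c_{\ul{a}_n})\cdot Df_{\ul{a}_n}(c_{a_n})$, so the affine part of $A_{h_n}\circ h^{\ul{\theta}}\circ F^{\ul{\theta}\ul{a}_n}$ collapses to $z$ and one is left with the term
\[
[Dh(c_{\ul{a}_n})\cdot Df_{\ul{a}_n}(c_{a_n})]^{-1}R(w,w_0).
\]
Using $|DF^{\ul{\theta}\ul{a}_n}|\lesssim |Df_{\ul{a}_n}(c_{a_n})|$ (since $|Dk^{\ul{\theta}}|$ is uniformly bounded by Lemma \ref{limgeo}) and $|z|$ bounded on the domain, this error is $O(|Df_{\ul{a}_n}(c_{a_n})|^{\varepsilon})$ in $C^0$. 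An analogous use of the Hölder continuity of $Dh^{\ul{\theta}}$ yields the same bound for the derivative. By Corollary \ref{diam} and Lemma \ref{size}, $|Df_{\ul{a}_n}(c_{a_n})|\lesssim\mathrm{diam}(G(\ul{a}_n))\lesssim \mu^{-n}$, which gives the desired estimate in $C^1$.

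The main obstacle is bookkeeping: ensuring all the derivative factors $Df_{\ul{a}_n}$, $Dk^{\ul{\theta}}$ and $DF^{\ul{\theta}\ul{a}_n}$ line up so that the leading (linear) term is exactly $z$ with unit coefficient and only the remainder $R$ survives after rescaling. Conformality is what makes this clean: treating every derivative as a complex scalar commutes freely, so cancellations in the chain rule are transparent, and the only quantitative input needed beyond this is the uniform Hölder estimate on $Dh^{\ul{\theta}}$.
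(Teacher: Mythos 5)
Your proposal is correct and follows essentially the same strategy as the paper: iterate Lemma \ref{afrenor} to obtain $h_n^{\ul{\theta}\ul{a}_n} = h^{\ul{\theta}} \circ F^{\ul{\theta}\ul{a}_n}$ with $F^{\ul{\theta}\ul{a}_n}$ affine of expansion comparable to $\text{diam}(G(\ul{a}_n))$, observe that $A_{h_n}\circ h_n^{\ul{\theta}\ul{a}_n}$ fixes $0$ with identity derivative there, and then extract the estimate from the $C^{1+\varepsilon}$ (Hölder) Taylor remainder together with the controlled sizes of $A_{h_n}$ and the relevant domain. You spell out the chain-rule cancellations more explicitly than the paper does, and you also correctly land on the bound $\text{diam}(G(\ul{a}_n))^{\varepsilon}$ (the exponent $-n\varepsilon$ in the lemma statement, and the $1+\varepsilon$ at the end of the paper's proof, are evidently typos for $\varepsilon$), but the underlying argument is the same.
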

\begin{proof} As seen in lemma \ref{limgeo}, $h^{\ul{\theta}}\circ k^{\ul{\theta}} \circ f_{a_0, a_1} = h^{\ul{\theta}} \circ F^{\ul{\theta}a_1}  \circ k^{\ul{\theta}a_1}  $, and so concatenating all the renormalizations we have:
$$ h^{\ul{\theta}\ul{a}_n}_n = h^{\ul{\theta}} \circ  F^{\ul{\theta}{a}_1} \circ  F^{(\ul{\theta}{a}_1) {a}_2} \circ \cdots  F^{(\ul{\theta}a_1 \dots{a}_{n-1})a_n} ,$$
or equivalently, defining $F^{\ul{\theta}\ul{a}_n}= F^{\ul{\theta}{a}_1} \circ  F^{(\ul{\theta}{a}_1) {a}_2} \circ \cdots  F^{(\ul{\theta}a_1\dots{a}_{n-1})a_n}$, $h^{\ul{\theta}\ul{a}_n}_n= h^{\ul{\theta}} \circ F^{\ul{\theta}\ul{a}_n} $.

The expansion term of $F^{\ul{\theta}\ul{a}_n}$ is equal to $\dfrac{\text{diam}(k^{\ul{\theta}}(G_{\ul{a}_n}))} {\text{diam}(k^{\ul{\theta}\ul{a}_n}(G_{\theta_n}))} $, because of the relation $F^{\ul{\theta}\ul{a}_n} \circ  k^{\ul{\theta}\ul{a}_n}= k^{\ul{\theta}} \circ f_{\ul{a}_n}$ applied to the domain $G(a_n)$. Since any of the maps $k^{\ul{\theta}}$, $\ul{\theta}\in\Sigma^-$, has a uniformly bounded derivative, there is a constant $C > 0$ such that the expansion term of $F^{\ul{\theta}\ul{a}_n}$ is less than  $C \cdot \textrm{diam} (G_{\ul{a}_n})$ and more than  $C^{-1} \cdot \textrm{diam} (G_{\ul{a}_n})$ .

On the other hand, since $ Dk^{\ul{\theta}\ul{a}_n}(c_{a_n}) = \text{Id} \equiv 1 $ and the derivative of $h^{\ul{\theta}}$ is bounded from bellow and above, $m( A_{h_n} )$ and $|A_{h_n}|$ are also controlled by $\textrm{diam} (G_{\ul{a}_n})$ in the same way as $F^{\ul{\theta}\ul{a}_n}$. Now, the domain of interest of $h$ on the relation $h^{\ul{\theta}\ul{a}_n}_n= h^{\ul{\theta}} \circ F^{\ul{\theta}\ul{a}_n} $ is the set $ k^{\ul{\theta}} (G(\ul{a}_n)) $ ($=F^{\ul{\theta}\ul{a}_n} \circ k^{\ul{\theta}\ul{a}_n}(G_{a_n})$) whose size is also controlled by $\textrm{diam} (G_{\ul{a}_n})$. Then, arguing in the same way as in the analysis of the functions $\Psi^{\ul{\theta}}_m$ on lemma \ref{limgeo}, that is, using a relation of the type:

$$ |h(z+h) - (h(z) + Dh(z).h)| < C\cdot |h|^{1+\varepsilon}, $$

and remembering that $A_{h_n} \circ {h_n}^{\ul{\theta}\ul{a}_n}(0) = 0$ and $D(A_{h_n} \circ {h_n}^{\ul{\theta}\ul{a}_n})(0) = \text{Id}$ we have that $$||A_{h_n} \circ {h_n}^{\ul{\theta}\ul{a}_n}-\text{Id}||< C \cdot \text{diam}(G(\ul{a}_n))^{1+\varepsilon}$$ 

The exponential decay of ratio $\mu$ is a consequence of corollary \ref{diam}.

\end{proof}

\subsection{Recurrent compact criterion}\label{recc}

Given a pair of Cantor sets $K \text{ and } K'$ we are interested in finding configurations $h$ and $h'$ such that $h(K)$ intersects $h'(K')$. More importantly we want to find a criterion under which this intersection is stable, that is for small perturbations $\ti{h},\,\ti{h'}, \,\ti{K},\,\ti{K'}$ the sets $\ti{h}(\ti{K})$ and $\ti{h'}(\ti{K'})$ also have a non-empty intersection. 

With these ideas in mind, for any pair of configurations $(h_a, h'_{a'}) \in \mathcal{P}_a \times \mathcal{P}'_{a'} $  we say that it is:

\begin{itemize}
    \item \textit{linked} whenever $h_a(\ov{G(a))}) \cap h'_{a'}(\ov{G(a'))}) \neq \emptyset$.
    \item \textit{intersecting} whenever $h_a(K(a)) \cap h'_{a'}(K'(a')) \neq \emptyset$.
    \item \textit{has stable intersections} whenever $\ti{h}_{a}(\ti{K}(a)) \cap \ti{h'}_{a'}(\ti{K'}(a') \neq \emptyset$ for any pairs of Cantor sets $(\ti{K},\ti{K'}) \in \Om_\Sigma$ in a small neighbourhood of $(K,K')$ and any configuration pair $(\ti{h}_{a},\ti{h'}_{a'})$ that is sufficiently close to $(h_a,h'_{a'})$ in the $C^{1+\varepsilon}$ topology at $G(a) \cap \ti{G}(a)$ and $G(a') \cap \ti{G}'(a')$.
\end{itemize} 
 
It is better to work with $\mathcal{Q}$, the quotient of $\mathcal{P} \times \mathcal{P}'$ by the diagonal action of $Aff(\C)$. An element in $\mathcal{Q}$, represented by a pair $(h,h') $ is called a \emph{relative configuration} or as mentioned sometimes a \emph{relative positioning} of the Cantor sets. Since the action of the affine group preserves the linking, intersecting or stably intersection of a pair of configurations, these notions are defined for relative configurations too. Also, we can define for any pair in $\mathcal{P} \times \mathcal{P}'$ and any pair of words $(\ul{a},\ul{a}') \in \Sigma^{fin} \times {\Sigma'}^{fin}$ a renormalization operator

$${\mathcal{T}}_{\ul{a},\ul{a}'} (h,h') : = (T_{\ul{a}}(h),T_{\ul{a}'}(h')).$$

For the same reasons as above it can also be defined over $\mathcal{Q}$. Also, we can allow one of the words $\ul{a}$ or $\ul{a}'$ to be void. In that case, the operator only acts at the now trivial coordinate, for example $${\mathcal{T}}_{\emptyset,\ul{a}'} (h,h')=(h,T_{\ul{a}'}(h')).$$

Under this context, we have the following:

\begin{lemma} \label{link}A pair of relative configurations $(h_0, h'_0)$ is intersecting if, and only if, there is a relative compact sequence of relative configurations $(h_n,h'_n)$ obtained inductively by applying a renormalization operator on one of the configurations on the pair.

\end{lemma}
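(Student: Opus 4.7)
For the forward direction, I would start by assuming $h_0(x)=h'_0(y)=p$ with $x\in K(a_0)$, $y\in K'(a'_0)$, and letting $\ul{a}=(a_0,a_1,\dots)\in\Sigma$ and $\ul{a}'=(a'_0,a'_1,\dots)\in\Sigma'$ be their addresses. Build the sequence inductively: if the current pair is $(h_n,h'_n)=(h_0\circ f_{\ul{a}_k},\,h'_0\circ f_{\ul{a}'_{k'}})$ with $k+k'=n$, apply the next single-step renormalization on whichever side has the larger current image diameter. By Corollary \ref{diam} and the boundedness of $Dh_0,Dh'_0$, this greedy rule keeps the ratio $\text{diam}(h_n(G(a_k)))/\text{diam}(h'_n(G'(a'_{k'})))$ uniformly pinched, and both images still contain $p$. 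Choose $A_n\in Aff(\C)$ translating $p$ to $0$ and rescaling so that $A_n(h_n(G(a_k)))$ has diameter $1$; comparability of the two diameters then forces $A_n(h'_n(G'(a'_{k'})))$ to have uniformly bounded diameter and to contain $0$ as well.

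To upgrade this $C^0$ boundedness to genuine relative compactness in $\mathcal{Q}$, I would use the decomposition $f_{\ul{a}_k}=\Phi_{\ul{a}_k}^{-1}\circ k^{\ul{a}_k}$ arising from the definition of the limit geometries. Then $A_n\circ h_n=(A_n\circ h_0\circ\Phi_{\ul{a}_k}^{-1})\circ k^{\ul{a}_k}$; the first factor is a $C^{1+\varepsilon}$ map on a set of diameter controlled by $\text{diam}(G(\ul{a}_k))$, hence it is $C^{1+\varepsilon}$-close to an affine map via the same Taylor-type estimate used to control $\Psi^{\ul{\theta}}_n-\text{Id}$ in Lemma \ref{limgeo}, while $k^{\ul{a}_k}\to k^{\ul{a}}$ uniformly in $C^{1+\varepsilon}$. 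The identical analysis applies to $A_n\circ h'_n$ using the bounded ratio of scales, so the rescaled pair is uniformly bounded in $C^{1+\varepsilon}$ and a compactness argument (Arzel\`a--Ascoli) gives the required relative compactness.

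Conversely, suppose $(h_n,h'_n)$ is a relatively compact sequence in $\mathcal{Q}$ produced by single-step renormalizations, so that $h_n=h_0\circ f_{\ul{a}_{k(n)}}$ and $h'_n=h'_0\circ f_{\ul{a}'_{k'(n)}}$ with $k(n)+k'(n)=n$ and both $k,k'$ nondecreasing. If $k(n)$ stayed bounded while $k'(n)\to\infty$, then $h_n$ would eventually stabilize while $|Dh'_n|$ decays like $\mu^{-k'(n)}$; any $A_n\in Aff(\C)$ keeping $A_n\circ h'_n$ non-degenerate forces $|A_n|\to\infty$, making $A_n\circ h_n$ unbounded and contradicting relative compactness. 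Hence both $k(n),k'(n)\to\infty$, and the nested intersections determine points $x\in K(a_0)$ and $y\in K'(a'_0)$. Choosing representatives $A_n\circ h_n,\,A_n\circ h'_n$ inside a common bounded set $B\subset\C$, the identities $h_n(g^{k(n)}(x))=h_0(x)$ and $h'_n((g')^{k'(n)}(y))=h'_0(y)$ place both $A_n h_0(x)$ and $A_n h'_0(y)$ inside $B$, so that $|A_n|\cdot|h_0(x)-h'_0(y)|$ is uniformly bounded. Since the same derivative estimate gives $|A_n|\to\infty$, we must have $h_0(x)=h'_0(y)$, so the pair is intersecting.

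The main technical obstacle I foresee is the $C^{1+\varepsilon}$ half of relative compactness in the forward direction: matching diameters only produces $C^0$ control, and upgrading to $C^{1+\varepsilon}$ requires the factorization through limit geometries together with the uniform Hölder estimates of Lemma \ref{limgeo} and the bounded distortion Corollary \ref{bd}; without the conformality hypothesis these estimates would fail, and the whole scheme would collapse.
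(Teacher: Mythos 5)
Your proof is correct and follows essentially the same strategy as the paper: a greedy alternating renormalization scheme that keeps the ratio of image diameters pinched, relative compactness extracted from the convergence of scaled renormalizations to limit geometries (you re-derive via the factorization $f_{\ul{a}_k}=\Phi_{\ul{a}_k}^{-1}\circ k^{\ul{a}_k}$ what the paper cites directly from Lemma \ref{scale}), and extraction of a common intersection point for the converse. Your treatment of the converse is in fact more careful than the paper's one-line assertion that the chosen points $p_n,\,p'_n$ share a limit: your argument that both word lengths $k(n),k'(n)$ must tend to infinity and that consequently $|A_n|\to\infty$ supplies the justification for that convergence, which the paper leaves implicit.
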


\begin{proof}
Indeed, if $h_0(K)$ and $h'_0(K')$ are intersecting at a point $q=h_0(p)=h'_0 (p')$, ($p \in K $ and $p' \in K'$) consider the sequences $H(p) = (a_0, a_1, \dots) \in \Sigma$ and $H'(p)=(a'_0,a'_1, \dots) \in \Sigma'$ where $H$ and $H'$ are the homeomorphism defined in the section \ref{first}. We can construct a sequence of configurations $(h_n, h'_n)$, obtained by successively renormalizing by the functions $f_{a_{i},a_ {i+1}}$ and  $f_{a'_{i},a'_{i+1}}$ chosen in a careful order such that the ratio of diameters of the images of the configurations are bounded away from infinite and zero. 

This can be done as follows. Let $\ul{a}_n = (a_0,a_1, \dots, a_n)$. By lemma \ref{scale} the diameter of $\text{Im}(h_0\circ f_{\ul{a}_n}) = \text{Im}(A_{h_n}^{-1}\circ (A_{h_n}\circ h_n^{\ul{\theta}\ul{a}_n}) \circ k^{\ul{\theta}\ul{a}_n}) $ is comparable to $|A_{h_n}|$ (and $m( A_{h_n} )$), since $\Sigma^-$ is compact. But by the proof of lemma \ref{scale} $|A_{h_n}|$ is comparable to $\text{diam}(G(\ul{a}_n))$, which by equation \ref{eq:size} of lemma \ref{limgeo} and corollary \ref{bd} is comparable to $\text{diam}(G(\ul{a}_{n-1}))$. Hence, diameter of $\text{Im}(h_0\circ f_{\ul{a}_n})$ is comparable to the diameter of $\text{Im}(h_0\circ f_{\ul{a}_{n-1}})$ independently of $n$ and the same analysis is valid for $K'$ and $h'_0$, making the construction above possible.

Finally, such pairs of configurations are always intersecting, since the point $q$ belongs to both their images, and so, when seen in $\mathcal{Q}$ this sequence is relatively compact.

On the other hand, if such a relative compact sequence exists, choosing points $p_n \in h_n(K) \subset h_0(K)$ and  $p'_n \in h'_n(K') \subset h'_0(K')$ we have that $\lim_{n \to \infty} p_n = p = \lim_{n \to \infty} p'_n $ and then $p \in h_0(K) \cap h'_0(K') \neq \emptyset $ as we wanted to show.

\end{proof}

This lemma is really important in finding a criterion for stable intersection in Cantor sets. To do it, we will work with the space of relative affine configurations of limit geometries, that is, $\mathcal{C}$,  the quotient by the action of the affine group on the left of $\mathcal{A} \times \mathcal{A'} $. The concepts above were well defined for pairs of affine configurations of limit geometries, and again, since they are invariant by the action of $Aff(\C)$ they are also defined for relative configurations in $\mathcal{C}$. Also, since the renormalization operator acts by multiplication on the right on $(A,\ul{\theta})$ its action commutes with the multiplication on the left by affine transformations and so it can be naturally defined on $\mathcal{C}$.

\begin{definition}[Recurrent compact criterion] Let $\mathcal{L}$ be a compact set in $\mathcal{C}$. We say that $\mathcal{L}$ is \emph{recurrent}, if for any relative affine configuration of limit geometries $v \in \mathcal{L}$, there are words $\ul{a}$, $\ul{a}'$ such that $u=\mathcal{T}_{\ul{a},\ul{a'}} (v) $, satisfies $u \in \text{int }\mathcal{L}$.

If such a renormalization can be done using words $\ul{a}$ and $\ul{a}'$ such that their total size combined is equal to one, we say that such a set is \textit{immediately recurrent}.

\end{definition}

\begin{theorem*b} The following properties are true:
\begin{enumerate}
    \item Every recurrent compact set is contained on an immediately recurrent compact set.
    \item Given a recurrent compact set $\mathcal{L}$ (resp. immediately recurrent) for $g$, $g'$, for any $\ti{g}$,$\ti{g}'$ in a small neighbourhood of  $(g,g') \in \Om_{\Sigma} \times \Om_{{\Sigma}'}$ we can choose points $\ti{c}_a \in \ti{G}(a) \subset \ti{K}$ and  $\ti{c}_{a'} \in \ti{G}(a') \subset \ti{K}'$ respectively close to the pre-fixed $c_a$ and $c_{a'}$ in a manner that $\mathcal{L}$ is also a recurrent compact set for $\ti{g}$ and $\ti{g}'$.
    \item Any relative configuration contained in a recurrent compact set has stable intersections.
\end{enumerate}

\end{theorem*b}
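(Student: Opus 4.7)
The plan is to handle the three assertions in order, leveraging the configuration/limit-geometry framework developed in the preceding subsections.

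For (1), I would use compactness of $\mathcal{L}$ and continuity of renormalization on $\mathcal{C}$. For each $v \in \mathcal{L}$, pick words $(\ul{a}_v,\ul{a}'_v)$ with $\mathcal{T}_{\ul{a}_v,\ul{a}'_v}(v) \in \mathrm{int}\,\mathcal{L}$; by continuity there is an open neighbourhood $U_v \ni v$ whose image under $\mathcal{T}_{\ul{a}_v,\ul{a}'_v}$ still lies in $\mathrm{int}\,\mathcal{L}$. Extract a finite subcover $U_{v_1},\ldots,U_{v_N}$ and define
\[ \mathcal{L}^* := \mathcal{L} \cup \bigcup_{i=1}^N \bigcup_{(\ul{b},\ul{b}')} \mathcal{T}_{\ul{b},\ul{b}'}(\ov{U_{v_i}} \cap \mathcal{L}), \]
where $(\ul{b},\ul{b}')$ runs over the proper prefix pairs of $(\ul{a}_{v_i},\ul{a}'_{v_i})$. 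This is a finite union of continuous images of compact sets, hence compact. Any $v \in \mathcal{L}^*$ then admits a single-letter renormalization into $\mathrm{int}\,\mathcal{L}^*$: if $v \in \mathcal{L}$, apply the first letter of some $(\ul{a}_{v_i},\ul{a}'_{v_i})$ corresponding to a chart $U_{v_i}\ni v$; otherwise $v$ is itself a partial renormalization and one more letter lands in the next layer of $\mathcal{L}^*$. A mild enlargement, again by continuity, places the image in the interior.

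For (2), the key tool is Lemma~\ref{continuous}: choosing $\ti{c}_a,\ti{c}_{a'}$ close to $c_a,c_{a'}$, the perturbed limit geometries $\ti{k}^{\ul{\theta}},\ti{k}^{\ul{\theta}'}$ are $C^1$-close to $k^{\ul{\theta}},k^{\ul{\theta}'}$ uniformly in $\ul{\theta},\ul{\theta}'$. Via the identification $I:\mathcal{A}\times\mathcal{A}'\to\mathcal{P}\times\mathcal{P}'$ this yields continuous dependence of the renormalization operators on $(g,g')$. By part~(1) we may assume $\mathcal{L}$ is immediately recurrent, so only the finitely many operators indexed by admissible single-letter pairs are involved; continuity of these together with compactness of $\mathcal{L}$ ensures that, for $(\ti{g},\ti{g}')$ sufficiently close to $(g,g')$, each element of $\mathcal{L}$ is still sent into its interior by the corresponding $\ti{\mathcal{T}}_{(a,b),(a',b')}$.

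For (3), let $v_0 = [(A_0,\ul{\theta}_0),(A'_0,\ul{\theta}'_0)] \in \mathcal{L}$. By recurrence, inductively pick $(\ul{a}_n,\ul{a}'_n)$ so that $v_n := \mathcal{T}_{\ul{a}_n,\ul{a}'_n}(v_{n-1}) \in \mathrm{int}\,\mathcal{L}$, producing a sequence $(v_n) \subset \mathcal{L}$ obtained by successive renormalizations of the original pair. By Lemma~\ref{afrenor}, each $v_n$ represents the same relative configuration in $\mathcal{Q}$ as $(A_0 \circ k^{\ul{\theta}_0} \circ f_{\ul{a}_1\cdots\ul{a}_n}, A'_0 \circ k^{\ul{\theta}'_0} \circ f_{\ul{a}'_1\cdots\ul{a}'_n})$. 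Relative compactness of $(v_n)$ in $\mathcal{Q}$, inherited from the compactness of $\mathcal{L} \subset \mathcal{C}$ and the continuity of $I$, then gives intersection by Lemma~\ref{link}. For stability, combine (2) with Lemma~\ref{scale}: given perturbations $(\ti{K},\ti{K}',\ti{h}_a,\ti{h}'_{a'})$, the scaled renormalizations of $(\ti{h}_a,\ti{h}'_{a'})$ converge exponentially to affine configurations of $\ti{g}$-limit geometries, which by (2) lie arbitrarily close to $\mathcal{L}$; after enough renormalizations one lands in $\mathrm{int}\,\mathcal{L}$ and the chain above is reproduced for the perturbed data. The main technical obstacle is precisely this last step: propagating small $C^{1+\varepsilon}$ perturbations through infinitely many renormalizations. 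The exponential contraction in Lemma~\ref{scale} and the continuous dependence from Lemma~\ref{continuous} supply the needed drift estimates, and the \emph{interior} margin in the definition of recurrent compact absorbs them; without this margin the inductive construction could break down at some finite step.
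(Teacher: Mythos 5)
Your proposal tracks the paper's three-step strategy closely: cover $\mathcal{L}$ by finitely many compacta and assemble an immediately recurrent extension from partial renormalizations; invoke Lemma~\ref{continuous} for stability of the operators under perturbation of $(g,g')$; and, for stable intersections, exploit the exponential contraction of Lemma~\ref{scale} together with the interior margin of the recurrent compact. Parts (1) and (2) are essentially the paper's argument modulo minor rearrangement (in (1) the paper builds the intermediate compacta $\mathcal{L}^{i,j_k}$ inductively precisely so that the ``land in the interior of the next layer'' property holds uniformly, which is what your phrase ``a mild enlargement places the image in the interior'' is glossing over, but that can be filled in).

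The genuine gap is in (3). Your claim that ``the scaled renormalizations of $(\ti{h}_a,\ti{h}'_{a'})$ converge exponentially to affine configurations of $\ti{g}$-limit geometries'' is not correct as stated, and the imprecision hides the real obstruction. The scaling $A_{h_n}$ normalizes only the first coordinate, so Lemma~\ref{scale} gives that $A_{h_n}\circ h_n$ is exponentially close to $k^{\ul{\theta}\ul{a}_n}$; but the second coordinate in $\mathcal{Q}$ is $A_{h_n}\circ h'_n$, rescaled by the \emph{same} $A_{h_n}$, not by $A_{h'_n}$. Because $A_{h_n}\in Aff(\R^2)$ normalizes a derivative of $h$ (which is merely $C^{1+\varepsilon}$ and not conformal off $K$), $DA_{h_n}\cdot Dh'(\,\cdot\,)$ is generically non-conformal, so $A_{h_n}\circ h'_n$ is \emph{not} close to any $B\circ k^{\ul{\theta}'\ul{a}'_n}$ with $B\in Aff(\C)$ — and $\mathcal{L}\subset\mathcal{C}$ consists of pairs of \emph{conformal} affine configurations of limit geometries, so there is no point of $\mathcal{L}$ to which the pair can be compared. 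The paper's proof resolves this by decomposing $T^m = DA_{h_m}\cdot Dh'(A'_{n'(m)}(0)) = P^m B^m$ into a conformal factor $B^m$ (which builds the conformal affine $B_m$ that \emph{can} be tracked inside $\mathcal{L}$) and a bounded non-conformal distortion $P^m$, and then shows $\mathfrak{h}'_m$ stays $C^1$-close to the affine map $P_m$ rather than to the identity, with $\|P_m - \text{Id}\|$ bounded by the distortion of $Dh'$ at certain points — small but not decaying. This bounded, non-decaying distortion is exactly what the interior margin must absorb; it does not shrink exponentially, contrary to what your phrasing suggests. Add to this the finite-step bootstrapping for the first $m_0$ iterates (where one must use the crude bound $\|\mathfrak{h}_m-\text{Id}\|\leq R\,\|\mathfrak{h}_{m-1}-\text{Id}\|$ before the exponential estimate takes hold), and one sees that the conformal/non-conformal split is not a bookkeeping detail but the load-bearing step that makes the induction close; without it, ``the chain above is reproduced'' does not follow.
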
 

\begin{proof}

(1) We remember that $\mathcal{A}$ is a metric space, so for every point $v \in \mathcal{L}$, the recurrent compact set, there is a closed ball around $v$ that is carried by a renormalization (given by a pair of words $\ul{a}^{v}, \, {\ul{a}'}^{v}$) into the interior of $\mathcal{L}$. Since this set is compact, there is a finite number $N$, compact sets (balls) $\mathcal{L}^{i}$ and associated pair of words $(\ul{a}^{i},{\ul{a}'}^{i})$ for $1 \leq i \leq N$ such that $\mathcal{L}^i$ is carried into the interior of $\mathcal{L}$ by the renormalizations associated to the pair $(\ul{a}^{i},{\ul{a}'}^{i})$. Now considering for every such pair, all the pairs of words $(\ul{b}^{i,j},{\ul{b}'}^{i,j})$ that are contained on $(\ul{a}^{i},{\ul{a}'}^{i})$. We construct an immediately recurrent Cantor set  $\mathcal{L}'$ on the following way. 
    
First, we choose one of the pairs $(\ul{b}^{i,j_1},{\ul{b}'}^{i,j_1})$ with total size one and construct a compact set $\mathcal{L}^{i,j_1}$ such that $T_{(\ul{b}^{i,j_1},{\ul{b}'}^{i,j_1})} \mathcal{L}^i \subset \text{int } \mathcal{L}^{i,j_1}$ and $T_{(\ul{b}^{i,\ov{j_1}},{\ul{b}'}^{i,\ov{j_1}})} \mathcal{L}^{i,j_1} \subset \text{int } \mathcal{L} $, where $(\ul{b}^{i,\ov{j_1}},{\ul{b}'}^{i,\ov{j_1}})$ is the word pair that need to be concatenated into $(\ul{b}^{i,j_1},{\ul{b}'}^{i,j_1})$ to result in $(\ul{a}^{i},{\ul{a}'}^{i})$. This can be done in the same manner the sets $\mathcal{L}^i$ were constructed just above.
        
Then we inductively construct a sequence of compact sets $\mathcal{L}^{i,j_k}$ $k=1, \dots, m$ such that for each $\mathcal{L}^{i,j_k}$ there is a renormalization by words of total size one that carries it into $\text{int } \mathcal{L}^{i,j_{k+1}}$ for $k= 1, \dots, m-1$ and carries $\mathcal{L}^{i,j_m}$ into $\text{int } \mathcal{L}$. Then, taking $\mathcal{L'} = \bigcup_{i,j_k}  \mathcal{L}^{i,j_k}$ we have an immediately recurrent compact set. 
    
(2) Using the decomposition $\mathcal{L} = \cup^{N}_{i=1} \mathcal{L}_i$, described in the previous argument, it is enough to show that for any $\varepsilon>0$ there is $\delta$ such that for any pair $(\ti{g}, \ti{g}')$ in $U_{K,\delta} \times U_{K',\delta} $ the renormalizations operators associated to the words $\ul{a}^{i} \text{ and } {\ul{a}'}^{i}$, $i=1, \dots, N$, denoted by  $\ti{\mathcal{T}}_{\ul{a}^{i},{\ul{a}'}^{i}}=(\ti{T}_{\ul{a}^{i}},\ti{T}_{{\ul{a}'}^{i}})$ satisfy $ ||\ti{\mathcal{T}}-\mathcal{T}|| < \varepsilon$. 

These operators are obtained by composition of a finite number of  operators arising from pairs of words of total size one, and so we need only to that for any $\varepsilon'>0$ a $\delta$ can be find such that $|T_{a,b}-\ti{T}_{a,b}|<\varepsilon'$ and  $|T_{a',b'}-\ti{T}_{a',b'}|$ for all pairs $(a,b) \in B$ and $(a',b') \in B'$, or precisely, that for any $\ul{\theta} \in \Sigma^-$ and $\theta_1 \in \mathrm{A}$, $|F^{\ul{\theta}\theta_1} -\ti{F}^{\ul{\theta}\theta_1}  | < \varepsilon'$  and its analogous for $K' \text{ and } \ti{K}'$. However as seen in lemma \ref{afrenor} $F^{\ul{\theta}\theta_1}=k^{\ul{\theta}} \circ f_{\theta_0,\theta_1} \circ (k^{\ul{\theta}\theta_1})^{-1}$ an we need only to show that the values of all the function and their derivatives above at a fixed point $x$ do not change much when considering its $ \ti{K} $ version. This is was done in lemma \ref{continuous}.
    
(3) Given a recurrent compact set $\mathcal{L}$ relative to a pair of Cantor sets $K$ and $K'$ its image under
\[I : \mathcal{C} \to \mathcal{Q} \]
\[ [(A,\theta),(A',\theta')] \mapsto [A \circ k^{\ul{\theta}},A' \circ k^{\ul{\theta}'}]\]
is also a compact set, because this association is continuous. In what follows, whenever we work with the set $\mathcal{L}$ in the context of $\mathcal{Q}$ we are referencing the set $I(\mathcal{L})$. In this sense, following lemma \ref{link}, any pair $(A\circ k^{\ul{\theta}},{A'}\circ {k}^{\ul{\theta}'})$ representing a relative affine configuration of limit geometries $v \in \mathcal{L}$ is intersecting. In light of the previous item, this implies that $A \circ \ti{k}^{\ul{\theta}}$ and $A' \circ \ti{{k}}^{\ul{\theta}'}$ also represent intersecting configurations for a pair of Cantor sets $(K',\ti{K'})$ sufficiently close to $(K,K')$.
    
Thus, it is enough to show that, for any pair of Cantor sets $(K,K')$ that has a recurrent compact set $\mathcal{L}$ and a configuration pair $v \in \mathcal{L}$ represented by $[(A,\ul{\theta}),(A',\ul{\theta}')]$, if $h : \text{Im}(A \circ k^{\ul{\theta}}) \to \C$ and $h' : \text{Im}(A \circ k^{\ul{\theta}'}) \to \C$ are embeddings $C^{1+\varepsilon}$ close to the identity, then $h \circ A \circ k^{\ul{\theta}}$ and $h' \circ A' \circ k^{\ul{\theta}'}$ are also intersecting. 
    
Again, it is a simple observation from real analysis that:
$$ |h(z+\delta) -(h(z)+Dh(z) \cdot \delta)|<C \cdot |\delta|^{1+\varepsilon}; \, z, \, z+\delta \in \text{Dom}(h)$$
for some constant $C> 0$ that depends only on the map $h$. Without loss of generality we can assume the same for the map $h'$. Also again, as seen in the analysis of the functions $\Psi^{\ul{\theta}_n}$, if $\mathfrak{h} = A \circ h|_{X} \circ B$, $A, \, B \in Aff(\R^2)$ are maps chosen such that $\mathfrak{h}$ has a fixed point $p$ with $D\mathfrak{h}(p)= \text{Id}$ it is $C \cdot \text{diam}(X)^{\varepsilon}$ - $C^1$ close to the identity, and the analogous result is valid for $h'$. 

Now consider a decomposition of the recurrent compact set $\mathcal{L} = \cup \mathcal{L}^i$ as done in item 1 and fix a set of renormalizations $T_{\ul{a}^i,{\ul{a}'}^i}$ that carries each $\mathcal{L}^i$ to the interior of $\mathcal{L}$. We can find a $\delta>0$ such that the distance of any $T_{\ul{a}^i,{\ul{a}'}^i}(v),\, v \in  \mathcal{L}$, to the boundary of $\mathcal{L}$ is bigger than $\delta$.
    
We can construct a sequence of pairs of relative configurations $[h_n,h'_n]$, $n \geq 0$, obtained by inductively applying one of the renormalization operators $T_{\ul{a}^i,{\ul{a}'}^i}$ to the previous element, that starts with $[h \circ A \circ k^{\ul{\theta}}, h' \circ A' \circ k^{\ul{\theta}'})]$ and is relatively compact. The idea is to use the series of renormalizations that worked for $[A\circ k^{\ul{\theta}},A'\circ k^{\ul{\theta}'} ]$ and do small adaptations along the way.  We do as follows.
    
First, we obtain $[h_1,h'_1]$ from $[h_0,h'_0]$ by one of the renormalizations above that carries $[A\circ k^{\ul{\theta}},A'\circ k^{\ul{\theta}'} ]$ to the interior of $\mathcal{L}$. We choose to look at any of the pairs $[h_n,h'_n]$ by its scaled representant $(A_{h_n} \circ h_n, A_{h_n}\circ h'_n )$. Also, we write $A_m = A \circ F^{\ul{\theta}\ul{a}_m} $ for the words $\ul{a}_m$, $m \in \N$ appearing in the argument bellow (and also analogously for $A'$).  In this manner we can write:
    
\begin{align*}
        [h_1,h'_1] & =[A_{h_1} \circ h \circ A_{n1} \circ k^{\ul{\theta}\ul{\theta}_{n1}} ,A_{h_1} \circ h' \circ A'_{n'1} \circ k^{\ul{\theta}'\ul{\theta}'_{n'1}} ]\\ 
        & = [\mathfrak{h}_1 \circ k^{\ul{\theta}\ul{\theta}_{n1}}, \mathfrak{h}'_1 \circ B_1 \circ  k^{\ul{\theta}'\ul{\theta}'_{n'1}} ]
\end{align*}
    
in which $\mathfrak{h}_1 = A_{h_1} \circ h \circ A_{n1}$, $\mathfrak{h}'_1= A_{h_1} \circ h' \circ A'_{n'1} \circ B_1^{-1}$, and $B_1 \in Aff(\C)$ is constructed below.
    
First, we decompose $ T^1 = DA_{h_1}\cdot Dh'(A'_{n'1}(0)) = P^1 \cdot B^1$; $B^1, \, P^1 \in \text{GL}_2(\R)$ but $B^1$ conformal. This decomposition can be done in this systematic way:
    
\begin{itemize}
        \item If $T^1$ is conformal, then $B^1=T^1$.
        \item If not, then if  $\lambda_1$ and $\lambda_2$ are the eigenvalues of $T^1$ (with repetition), we choose $B^1= \sqrt{\lambda_1\lambda_2}\cdot\text{Id}$.
\end{itemize}
    
    Under this notation $B_1 \in Aff(\C)$ is equal to
    
    $$ B^1 \cdot DA'_{n'1} \cdot (z-(A_{h_1} \circ h' \circ A'_{n'1})(0)) + (A_{h_1} \circ h' \circ A'_{n'1})(0) $$
    
    %and $P_1=P\cdot(z-(A_{h_1} \circ h' \circ A'_{n'1})(0))+(A_{h_1} \circ h' \circ A'_{n'1})(0)$.
    
    Notice that if $h$ and $h'$ are chosen sufficiently $C^{1}$ close to the identity $( k^{\ul{\theta}\ul{\theta}_{n1}}, B_1 \circ  k^{\ul{\theta}'\ul{\theta}'_{n'1}} ))$ represents a relative pair of configurations that is still in $\mathcal{L}$. In fact, by hypothesis $(k^{\ul{\theta}\ul{\theta}_{n1}}, A_{A_{n1} \circ k^{\ul{\theta}\ul{\theta}_{n1}}} \circ A'_{n'1} \circ k^{\ul{\theta}'\ul{\theta}'_{n'1}} ) \in \text{int}(\mathcal{L})$, and then, since $A_{h_1}$ is close to $A_{A_{n1} \circ k^{\ul{\theta}\ul{\theta}_{n1}}}$,  $B_1$ can be shown to be close to $A_{A_{n1} \circ k^{\ul{\theta}\ul{\theta}_{n1}}} \circ A'_{n'1} $.
    
    In light of a previous observation, the definition of $A_{h_1}$implies that $$||\mathfrak{h}_1-\text{Id}||_{C^1} < C \cdot \text{diam}(\text{Im}( A_{n1} \circ k^{\ul{\theta}\ul{\theta}_{n1}}))^{\varepsilon}.$$
    
    Also, in a similar manner, if we construct $P_1 \in Aff(\R^2)$ as
    \[ P_1 = P^1  \cdot (z-(A_{h_1} \circ h' \circ A'_{n'1})(0)) + (A_{h_1} \circ h' \circ A'_{n'1})(0), \]
    
    then the definition of $B_1$ implies that:
    
    $$ ||\mathfrak{h}'_1-P_1||_{C^1} < C \cdot \text{diam}(\text{Im}( A'_{n'1} \circ (B_1)^{-1} \circ k^{\ul{\theta}'\ul{\theta}'_{n'1}}))^{\varepsilon} $$
    
    Inductively, we have
    \[[h_{m-1},h'_{m-1}] = [\mathfrak{h}_{m-1} \circ k^{\ul{\theta}\ul{\theta}_{n(m-1)}}, \mathfrak{h}'_{m-1} \circ B_{m-1} \circ  k^{\ul{\theta}'\ul{\theta}'_{n'(m-1)}} ]\]
    with $B_{m-1} \in Aff(\C)$ such that 
    \begin{align*}
    [k^{\ul{\theta}\ul{\theta}_{n(m-1)}}, B_{m-1} \circ  k^{\ul{\theta}'\ul{\theta}'_{n'(m-1)}} ] \in \text{int}\,\mathcal{L},\\
    ||\mathfrak{h}_{m-1}-\text{Id}||_{C^1} < C \cdot \text{diam}(X_{m-1})^{\varepsilon}, \text{and}\\
     ||\mathfrak{h}'_{m-1}-P_{m-1}||_{C^1} < C \cdot \text{diam}(X'_{m-1})^{\varepsilon}
    \end{align*}
    
    where $P_{m-1}\in Aff(\R^2)$ is an affine map close to identity. To obtain $[h_m,h'_m]$ we now renormalize by one of the operators $T_{\ul{a}^i,{\ul{a}'}^i} $ that carries $( k^{\ul{\theta}\ul{\theta}_{n(m-1)}},  B_{m-1} \circ  k^{\ul{\theta}'\ul{\theta}'_{n'(m-1)}} )$ into the interior of $\mathcal{L}$. The result can analogously be represented as
    
    $$ (\mathfrak{h}_m \circ k^{\ul{\theta}\ul{\theta}_{n(m)}}, \mathfrak{h}'_m \circ B_m \circ  k^{\ul{\theta}'\ul{\theta}'_{n'(m)}} ) $$
     by making the same construction just above to create $B_m$ in $Aff(\C)$ and $P_m$ in $Aff(\R^2)$ as
     \begin{footnotesize}
     \begin{align*}
         B_m= B^{m} \cdot DA'_{n'(m)} \cdot (z-(A_{h_{m}} \circ h' \circ A'_{n'(m)})(0)) + (A_{h_{m}} \circ h' \circ A'_{n'(m)})(0)\\
          P_m= P^m  \cdot (z-(A_{h_m} \circ h' \circ A'_{n'(m)})(0)) + (A_{h_m} \circ h' \circ A'_{n'(m)})(0),
        \end{align*}
        \end{footnotesize}
 with $P^m$ and $B^m$ defined as were $P^1$ and $B^1$.         

By the same argument we can show that:
     $$||\mathfrak{h}_m-\text{Id}||_{C^1} < C \cdot \text{diam}(X_m)^{\varepsilon}, \text{and}$$
    $$ ||\mathfrak{h}'_m-P_m||_{C^1} < C \cdot \text{diam}(X'_m)^{\varepsilon} .$$
    
    where $X_m = \text{Im}( A_{n(m)} \circ k^{\ul{\theta}\ul{\theta}_{n(m)}})$ and $X'_m = \text{Im}( A'_{n'(m)} \circ (B_m)^{-1} \circ k^{\ul{\theta}'\ul{\theta}'_{n'(m)}})$. Observing that the diameters of these sets converge exponentially to zero and that the affine maps $P_m$ are within a controlled distance of the identity (to see this observe that $||P_m-\text{Id}||$ depends only on the distortion of $Dh$ and $Dh'$ at some specific points) we have the sequence of renormalizations are indeed relatively compact, in fact, in a bounded (small) neighbourhood of $\mathcal{L}$, finishing the proof.
    
    However, there is a little techinicality on doing the induction step: the estimative we have on $\mathfrak{h}_{m-1}$ and $\mathfrak{h'}_{m-1}$ does not guarantee that the maps involved in the definition of $B_{m}$ are going to be sufficiently close to the identity. It only guarantees this for $m>m_0$, for a large fixed $m_0$, since at least one of the diameters of $X_m$ or $X'_m$ diminishes by at least a factor $r<1$. Even so, all this only works if the induction steps work for all the $m_0$ steps that came before this one. Nonetheless, we can always guarantee that $||\mathfrak{h}_m-Id||_{C^1} < R \cdot ||\mathfrak{h}_{m-1}-Id||_{C^1}$ for some constant $R>0$ and analogously for the $\mathfrak{h}'_m$ and $\mathfrak{h}'_{m-1}$. So, if the initial distance to the identity of $h$ and $h'$ is sufficiently small, we can work the first $m_0$ steps of the induction with this new estimative and proceed as previously for $m>m_0$.

\end{proof}
\section{Constructing a compact reccurent set for Buzzard's example.}\label{last}

In the article \cite{buzz}, Buzzard found an open set $U \subset Aut(\C^2)$ with a residual subset $\mathcal{N} \subset U$ with coexistence of infinitely many sinks, thus establishing the existence of Newhouse phenomenon on the two dimensional complex context. The strategy was very similar to the one by Newhouse in his works \cite{n_1}, \cite{n_2} and \cite{n_3}. Consider the example \ref{shoe} in section \ref{3}.

Checking the section 5 of \cite{buzz}, we find a very favourable construction of a tangency between $W^s_F(0)$ and $W^u_F(0)$; the disk of tangencies $D_T$ is equal to a small vertical plane $\{q\} \times \rho_2 \cdot \mathbbm{D}$ and, choosing a suitable parametrization of $D_T$  we can assume that the projections $\Pi_s$ and $\Pi_u$ along the stable and unstable foliations from $W^u_{F,\text{loc}}$ and $W^s_{F,\text{loc}}$ to $D_T$  are the identity (considering also the obvious inclusion of such sets to $\C$).

Moreover, we remember that, as already discussed in section \ref{3}, for any $G \in Aut(\C^2)$ such that $||G|_{K_1}-F||$ is sufficiently small, both unstable and stable foliations are also defined and can be taken $C^r,\,r>1,$ very close to the vertical and horizontal foliations. That means, denoting by $p_G$ the continuation of the fixed point $0$ for $F$, there are continuations $W^{u,s}_{G,\text{loc}}(p_G)$, parametrized by $a^u(w) = (\alpha^u(w),w), \, w \in S(0;c_0)$ and $a^s(z)=(z,\alpha^s(z)), z \in S(0;c_0)$ respectively,  with $\alpha^s \text{ and } \alpha^u$  very close to zero, such that the sets $(a^u)^{-1}( W^{u}_{G,\text{loc}}(p_G \cap \Lambda_G)) = K^s_G$ and $(a^s)^{-1}(W^{s}_{G,\text{loc}}(p_G) \cap \Lambda_G)=K^u_G$ are Cantor sets very close to $K$ in the topology we consider for $\Om_{P^{\N}}$. Further, the line of tangencies $D^G_T$ is also well-defined and can be parametrized by something close to the parametrization of $D_T$. Therefore, the projections $\Pi_s$ from $W^u_{G,\text{loc}}$ to $D^G_T$ and the projection $\Pi_u$ from $W^s_{G,\text{loc}}$ to $D^G_T$ can be seen, under these parametrizations, as diffeomorphisms $h^s$ and $h^u$ very close to the identity.

The existence of a tangency between between $W^s(\Lambda_G)$ and $W^u(\Lambda_G)$ corresponds to a intersection between $h^s(K^s_G)$ and $h^u(K^u_G)$. Consequently, if we can show that the configuration pair $(\text{Id},\text{Id})$ has stable intersections for the pair $(K,K)$ of conformal Cantor sets, then, for every $G \in Aut(\C^2)$ such that $||G|_{K_1}-F||$ is sufficiently small there is a homoclinic tangency at $G$. We show that this is the case:

\begin{theorem*c}
There is $\delta$ sufficiently small for which the pair of Cantor sets $(K,K)$ defined above has a recurrent compact set of affine configurations of limit geometries $\mathcal{L}$ such that $[\text{Id},\text{Id}] \in \mathcal{L}$.
\end{theorem*c}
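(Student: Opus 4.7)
The plan is to leverage the affine self-similarity of Buzzard's Cantor set to collapse everything to a concrete covering/thickness statement about $K - K$. Since every inverse branch has the form $f_{(a,b)}(z) = (c_1/3)z + a$ and is conformal, choosing base points $c_a = a$ makes the limit geometries trivialize: a direct computation (in which the normalizing affine $\Phi_{\ul{\theta}_n}$ exactly cancels the scaling and translation of $f_{\ul{\theta}_n}$) gives $k^{\ul{\theta}}(z) = z - \theta_0$ for every $\ul{\theta}\in\Sigma^-$. Hence, for $\ul{a} = (a_0,\dots,a_n)$, Lemma \ref{afrenor} yields the explicit formula $F^{\ul{\theta}\ul{a}}(w) = (c_1/3)^n w + S(\ul{a})$, where $S(\ul{a}) := \sum_{j=1}^{n} (c_1/3)^j a_j$.

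Fixing the canonical representative of a relative configuration by setting its first entry to $\mathrm{Id}$, the second entry has the form $A'(w) = \alpha w + \gamma$, and applying $\mathcal{T}_{\ul{a},\ul{a}'}$ with $|\ul{a}| = |\ul{a}'| = n$, then renormalising back to canonical form, sends
\[(\alpha,\gamma) \;\longmapsto\; \bigl(\alpha,\; (3/c_1)^n\bigl[\alpha S(\ul{a}') + \gamma - S(\ul{a})\bigr]\bigr).\]
I would take as candidate the compact set $\mathcal{L}$ parametrised by small closed disks $\alpha \in \overline{D}(1;r_\alpha)$, $\gamma \in \overline{D}(0;r_\gamma)$, with $\ul{\theta},\ul{\theta}'$ arbitrary in the compact space $\Sigma^-$. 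The point $[\mathrm{Id},\mathrm{Id}]$ lies at $(\alpha,\gamma)=(1,0)$. Restricting attention to renormalisations with $n = n'$ keeps $\alpha$ fixed exactly, so the recurrent compact criterion reduces to: given any $(\alpha,\gamma) \in \mathcal{L}$, find admissible extensions $\ul{a},\ul{a}'$ of length $n$ such that
\[\bigl|\alpha S(\ul{a}') - S(\ul{a}) + \gamma\bigr| \;<\; \tfrac{1}{2} r_\gamma \, (c_1/3)^n.\]

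The central ingredient will be a thickness lemma for the self-difference of $K$. As $n \to \infty$ the finite sums $\{S(\ul{a}) - S(\ul{a}') : \ul{a},\ul{a}' \in P^n\}$ approximate, at rate $(c_1/3)^n$, the set $(c_1/3)(K-K)$, and $K - K$ is itself the attractor of the IFS of $25$ affine contractions $z \mapsto (c_1/3)z + v$ for $v \in \{-2,-1,0,1,2\}^2$. The key claim to be established is: for $\delta$ sufficiently small (equivalently $c_1$ close enough to $1$), the set $K - K$ contains a neighbourhood of $0$. Given this, continuity in $\alpha$ guarantees the slightly rotated/scaled set $\alpha\,(K-K)$ also covers a neighbourhood of $0$ once $r_\alpha$ is small, so every $-\gamma$ in a small enough disk around $0$ is approximable by $\alpha S(\ul{a}') - S(\ul{a})$ to any prescribed precision; choosing $n$ large enough to drive the approximation error below $\tfrac{1}{2} r_\gamma (c_1/3)^n$ produces the required renormalisation into $\mathrm{int}\,\mathcal{L}$. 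A minor separate check is that the freedom in choosing $\ul{a},\ul{a}'$ is compatible with the admissibility conditions coming from $\ul{\theta},\ul{\theta}'$, which is automatic here since every pair $(a,b) \in P \times P$ is admissible in Buzzard's construction.

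The main obstacle is the thickness lemma itself: showing quantitatively that $K - K$ has non-empty interior for small $\delta$. Unlike the real-line situation, where Newhouse's gap lemma produces interior points from a single thickness inequality, in two real dimensions the $25$ overlapping contracted copies that make up $K - K$ have a genuinely planar combinatorics, and one has to verify that their union contains an honest ball rather than just being connected or having prescribed gap ratios. The paper already flags that the estimate on $\delta$ will be far from optimal, so my plan would be to argue this by exhibiting an explicit ball contained in the union of the $25$ translates of the $n$-th Markov approximation of $(c_1/3)(K-K)$, for some moderate $n$ chosen in function of how close $c_1$ is to $1$; this reduces the statement to a finite planar covering computation, at the cost of a generous (non-sharp) bound on $\delta$.
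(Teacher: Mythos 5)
Your opening steps agree with the paper: because every inverse branch $f_{(a,b)}(z)=\tfrac{c_1}{3}z+a$ is affine and conformal, the limit geometries reduce to $k^{\underline{\theta}}(z)=z-\theta_0$, the renormalization affinities $F^{\underline{\theta}\underline{a}}$ have the explicit form you compute with real positive multiplier $(c_1/3)^{|\underline{a}|}$, and the whole problem collapses to dynamics on a single affine map $B=\alpha z+\gamma$ encoding the relative position. Past that point, however, your choice of $\mathcal{L}$ has a genuine gap.

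Under a renormalization by words of lengths $n$ and $n'$, the multiplier transforms as $\alpha\mapsto\alpha(c_1/3)^{n'-n}$; in particular $\arg\alpha$ is a conserved quantity and $|\alpha|$ can only jump by integer powers of $c_1/3\approx 1/3$. A disk $\alpha\in\overline{D}(1;r_\alpha)$ cannot be recurrent under this action. Take $\alpha$ on the boundary circle at the point where the ray from the origin through $\alpha$ is tangent to the disk (so $|\alpha|=\sqrt{1-r_\alpha^2}$, $\arg\alpha=\arcsin r_\alpha$): that ray meets $\overline{D}(1;r_\alpha)$ only at $\alpha$ itself, so no choice of $n,n'$ carries this configuration into $\text{int}\,\mathcal{L}$, whatever $\gamma$ does. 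Restricting to $n=n'$, as you explicitly propose, is even worse: then $\alpha$ is fixed and the entire boundary circle stays put. This is precisely why the paper's $\alpha$-region is an \emph{annulus} $R_{c'}$ (all arguments, with $|\alpha|\in[c'^{1/2},c'^{-1/2}]$ and $c'\le c_1/3$), cut into three bands $\mathcal{L}^{-1}\cup\mathcal{L}^0\cup\mathcal{L}^1$: when $|\alpha|$ drifts toward the outer (resp.\ inner) edge, a one-sided renormalization $\mathcal{T}_{\emptyset,\theta'_0 a'}$ (resp.\ $\mathcal{T}_{\theta_0 a,\emptyset}$) pushes $|\alpha|$ back into the core annulus, and the two overlap thresholds $\kappa_0<\kappa_2$ absorb the distortion in $\gamma$ that this one-sided step creates. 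Any recurrent compact set in $Aff(\C)$ here must be, roughly, a fattened multiplicative annulus; a disk around $1$ is the wrong shape.

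A second, acknowledged, gap is the reduction of the $\gamma$-recurrence to the assertion that $K-K$ contains a neighbourhood of $0$ for small $\delta$, which you leave unproved and only sketch as a finite covering computation. Beware the circularity: once Theorems~B and~C hold, stable self-intersection of $K$ immediately gives $0\in\text{int}(K-K)$, so your "thickness lemma" is a consequence of the target theorem. A non-circular version requires an independent explicit covering estimate at some finite level $n$; the paper sidesteps this entirely by arguing directly with the overlap proportion $\kappa$ and a pigeonhole bound on the areas of the $9\times 9$ sub-squares, which establishes recurrence without ever passing through $K-K$.
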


\begin{proof}
The first observation is that the maps defining $K$
\begin{align*}
    g_a: S(a;c_0) & \to S(0; 3)\\
            z & \mapsto \frac{3}{c_1}(z-a)
\end{align*}
 are all affine. Hence, if ${\ul{\theta}} \in (P^{\N}) ^-$ has $\theta^0=a$ then $k^{\ul{\theta}}$ is an affine transformation with derivative $\text{Id} \equiv 1 \in \C$ that carry a base point to $0$. So, choosing for any of the pieces $S(a;c_0)$ the base point $c_a=a$ we have $k^{\ul{\theta}}(z)=z-a$, whose image is always the set $S(0;c_0)$. 
 
 It is also easy to verify that, under our notation, for any $a, \, b \in P$, $ f_{a,b}(z)=\frac{c_1}{3}z+a $. We can them verify that the action of the renormalization operators is described by
 $$F^{\ul{\theta}ab}(z)= \frac{c_1}{3}(z+b).$$
 
As already discussed, we denote every configuration pair $[h,h'] \in \mathcal{Q}$ by its representative that is scaled in the first coordinate $(A_h \circ h, A_h \circ h')$. Similarly, any configuration pair $[(A,{\ul{\theta}}),(A',{\ul{\theta'}})] \in \mathcal{C} $  will be represented by the \enquote{triple} $(\ul{\theta},\ul{\theta}',\text{Id},A^{-1}\circ A)$. However, proceeding by algebraically calculating the renormalization operator under this identification makes it hard to construct a recurrent compact set, so we also choose a geometric interpretation. For this we may, identify any map $B \in Aff(\C)$ with  $B(S(0;c_0))$, which is a square embedded on $\C$, considering the orientation of its vertices. The figure below exemplifies this idea for our identification.

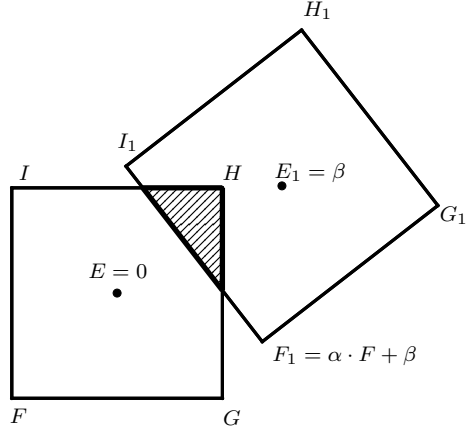
\begin{figure}[ht]
\centering
\begin{tikzpicture}[x=0.35cm,y=0.35cm]
\draw[line width=1.2pt] (-4.57917725411484,-3.7481838025587643) -- (3.4208227458851583,-3.7481838025587643) -- (3.4208227458851587,4.251816197441233) -- (-4.579177254114839,4.251816197441235) -- cycle;
\draw[line width=1.2pt] (4.935430204787627,-1.5968121494266012) -- (11.605253081178602,3.5797667994141577) -- (6.428674132337847,10.249589675805133) -- (-0.24114874405313014,5.073010726964377) -- cycle;
\draw[pattern=north east lines,pattern color=black] (0.3961962639349816,4.251816197441233) -- (3.4208227458851587,4.251816197441233) -- (3.4208227458851587,0.3547013072361964) -- cycle;
\draw [line width=1.2pt] (-4.57917725411484,-3.7481838025587643)-- (3.4208227458851583,-3.7481838025587643);
\draw [line width=1.2pt] (3.4208227458851583,-3.7481838025587643)-- (3.4208227458851587,4.251816197441233);
\draw [line width=1.2pt] (3.4208227458851587,4.251816197441233)-- (-4.579177254114839,4.251816197441235);
\draw [line width=1.2pt] (-4.579177254114839,4.251816197441235)-- (-4.57917725411484,-3.7481838025587643);
\draw [line width=1.2pt] (4.935430204787627,-1.5968121494266012)-- (11.605253081178602,3.5797667994141577);
\draw [line width=1.2pt] (11.605253081178602,3.5797667994141577)-- (6.428674132337847,10.249589675805133);
\draw [line width=1.2pt] (6.428674132337847,10.249589675805133)-- (-0.24114874405313014,5.073010726964377);
\draw [line width=1.2pt] (-0.24114874405313014,5.073010726964377)-- (4.935430204787627,-1.5968121494266012);
\draw [line width=2pt] (0.3961962639349816,4.251816197441233)-- (3.4208227458851587,4.251816197441233);
\draw [line width=2pt] (3.4208227458851587,4.251816197441233)-- (3.4208227458851587,0.3547013072361964);
\draw [line width=2pt] (3.4208227458851587,0.3547013072361964)-- (0.3961962639349816,4.251816197441233);
\begin{scriptsize}
\draw [fill=black] (-4.57917725411484,-3.7481838025587643) circle (0.5pt);
\draw[color=black] (-4.327379720753465,-4.429987134830876) node {$F$};
\draw [fill=black] (3.4208227458851583,-3.7481838025587643) circle (0.5pt);
\draw[color=black] (3.7811018368543063,-4.480350374319124) node {$G$};
\draw [fill=black] (3.4208227458851587,4.251816197441233) circle (0.5pt);
\draw[color=black] (3.80628345659843,4.811667311262469) node {$H$};
\draw [fill=black] (-4.579177254114839,4.251816197441235) circle (0.5pt);
\draw[color=black] (-4.100745143056353,4.8368489310065925) node {$I$};
\draw [fill=black] (4.935430204787627,-1.5968121494266012) circle (0.5pt);
\draw[color=black] (8.068996838687076,-2.114619209302736) node {$F_1=\alpha \cdot F + \beta$};
\draw [fill=black] (11.605253081178602,3.5797667994141577) circle (0.5pt);
\draw[color=black] (12.216944451135685,3.200043647638561) node {$G_1$};
\draw [fill=black] (6.428674132337847,10.249589675805133) circle (0.5pt);
\draw[color=black] (7.044060363376122,11.006345768316862) node {$H_1$};
\draw [fill=black] (-0.24114874405313014,5.073010726964377) circle (0.5pt);
\draw[color=black] (-0.1724124629730852,5.995203439236276) node {$I_1$};
\draw [fill=black] (-0.5791772541148399,0.25181619744123473) circle (1.5pt);
\draw[color=black] (-0.550136759134938,1.1099692088763058) node {$E = 0$};
\draw [fill=black] (5.682052168562737,4.326388763189267) circle (1.5pt);
\draw[color=black] (6.7525329666608815,4.8368489310065925) node {$E_1 = \beta$};
\end{scriptsize}
\end{tikzpicture}
\caption{The first square represents the configuration obtained from Id. The second represents the one from $A^{-1}\circ A(z)=\alpha\cdot z+\beta$. We will measure the distance of the configuration to the identity by the marked area the squares above have in common.} 
\label{fig:M1}
\end{figure}
The square $FGHI$ represents $\text{Id}(S(0;c_0))$ with $E=0$ as its center. The square $F_1G_1H_1I_1$ represents the image of $S(0;c_0)$ by $A^{-1}\circ A$. If $A^{-1}\circ A=\alpha\cdot z+ \beta, \, \alpha,\,\beta \in \C$, then the vector $\Vec{EE_1}$ represents $\beta$ and $\alpha =\frac{ A_1-b}{A}$, when $A,\,A_1,\,\beta$ are seen as complex numbers. In this figure, one can easily see that $\alpha=R\cdot \exp{\mi \Om}$, with $R>1$ and $\Om \in (0,\sfrac{\pi}{4})$. 

For each $\kappa \in (0,1]$ and each complex number $\alpha$ define $X^\kappa_\alpha$ as the set of all  $B \in Aff(\C)$ that are equal to $\alpha\cdot z + \beta, \text{ for some } \beta\in \C$, such that  the area of $  S(0;c_0) \cap B(S(0;c_0) ) $ is at least $ \kappa \cdot c_0^2 \cdot (1+|\alpha|^2)$, meaning that their intersection has an area of at least a small percentage of the sum of their areas. For $\kappa=0$ we write $X^\kappa_\alpha$ for the set of all $B \in Aff(\C) \text{ that are equal to }  \alpha\cdot z + \beta, \text{ for some } \beta\in \C$, such that $  S(0;c_0) \cap B(S(0;c_0) ) \neq \emptyset$. For example, if we consider the affine map identified with the figure just above, it is true that, for $\kappa$ very close to $0$, it is in $ X^\kappa_\alpha$. Also for any real number $c \in (0,1]$ define the set $R_{c}=\{z \in \C;\, {c}^{\sfrac{1}{2}} \leq |z| \leq {c}^{-\sfrac{1}{2}}\}$.

Let $\frac{1}{4}< \frac{c_1}{3} \leq c < 1$. If $\kappa_1 < \frac{c}{36(1+c)}$ then, for any $A \in X^0_\alpha \setminus X^{\kappa_1}_\alpha  $ with $\alpha \in R_{c}$ the intersection $S(0;c_0) \cap A(S(0;c_0))$ is contained in one of the four strips $S_1=\{z;\,\text{Re}(z) > \frac{c_0}{3}\} \cap S(0;c_0)$, $S_2=\{z;\,\text{Re}(z) < -\frac{c_0}{3}\} \cap S(0;c_0)$, $S_3=\{z;\,\text{Im}(z) > \frac{c_0}{3}\} \cap S(0;c_0)$ and $S_4=\{z;\,\text{Im}(z) < -\frac{c_0}{3}\} \cap S(0;c_0)$. It is also contained in one of the for strips $A(S_i), \, i=1,2,3,4.$ This estimative comes form the fact that if that would not be the case, then $\sfrac{1}{3} \cdot S(0;c_0) \cap A(S(0;c_0)) $ or $\ S(0;c_0) \cap A(\sfrac{1}{3} \cdot S(0;c_0)) $ would be non-empty. But in any of these cases, the area of $S(0;c_0) \cap A(S(0;c_0)) $ would be larger than $\frac{c}{36} $. 

Now, let $0<\kappa<\kappa_1 $. We show that if $c_1$ is really close to $1$ then for any $v \in \mathcal{C}$ identified by $(\ul{\theta},\ul{\theta}',\text{Id},A)$ with $A \in X^\kappa_\alpha \setminus X^{\kappa_1}_\alpha $ and $\alpha \in R_{\frac{c_1}{3}}$ we can find a pair of letters $(a,a') \in P^2$ such that the renormalization  $ \ti{\mathcal{T}}_{{\theta}_0a,{\theta}'_0a'}$ carries $v$ to $(\ul{\theta}a,\ul{\theta}'a',\text{Id},A')$ with $A' \in  X^{\kappa'}_\alpha$ with $\kappa' > \kappa \cdot \lambda$ where $\lambda> 1$ is a constant to be determined. 

Any of these renormalization operators has a very simple visual description when we consider the graphical identification we defined in this section. Precisely, it carries the square that represents $A$ to an inner square $Q_{A(a')}$, that is centered at the point $A(\frac{c_1}{3}a')$ and whose side is equal to $\alpha\frac{c_1}{3}c_0.$ The square that represents the identity is carried into $S(\frac{c_1}{3}a;\frac{c_1}{3}c_0)$.  

We begin by observing that, since $A \in  X^0_\alpha \setminus X^{\kappa_1}_\alpha  $, we can assume, without loss of generality for our next calculation, that $S(0;c_0) \cap A(S(0;c_0)) \subset S_1 \cap A(S_1)$. Dividing then $S_1$ into $3$ squares of side $\frac{c_0}{3}$, $Q_1,\,Q_2,\,Q_3$, we observe that it is impossible for $(Q_i) \cap A(Q_j) \neq \emptyset\, \forall i,j =1,2,3$. On the other hand, the area of 

$$ \bigcup_{a \in P}S(\frac{c_1}{3}a; c_0\cdot \frac{c_1}{3}) \cap \bigcup_{a' \in P} A( S(\frac{c_1}{3}a';c_0 \cdot \frac{c_1}{3}) $$

is at least $c_0^2$ multiplied by $c^2_1(1+|\alpha|^2) - (1-\kappa)(1+|\alpha^2|) $ and it is divided along at most $8$ intersections of the type $S(\frac{c_1}{3}a; c_0\cdot \frac{c_1}{3}) \cap A( S(\frac{c_1}{3}a';c_0 \cdot \frac{c_1}{3}) \neq \emptyset $ for $(a,a') \in P^2$. Since the areas of these squares are $c_0^2$ multiplied by $(\frac{c_1}{3})^2$ and $(\frac{c_1}{3})^2\cdot|\alpha|^2 $  respectively we need only to show that for $c_1$ big enough:

$$ (1+|\alpha|^2) \frac{(c_1^2 - (1-\kappa))}{8} \geq (1+|\alpha|^2) \frac{c_1^2 \cdot \lambda \cdot \kappa }{9} $$

and it is clear that we can choose $c_1$, $\lambda$ and $\kappa$ respecting all the previously fixed constraints in a way that the inequality above is true. Notice that the closer $c_1$ gets to $1$ the closer $\delta$ has to be to $0$. A simple calculation shows that for $\kappa> \kappa_0$ we can take any $c_1$ such that $c_1^2 > \frac{9-9\kappa_0}{9-8\kappa_0}$.

On the other hand, if $A \in X^{\kappa_1}_\alpha  $ with $\kappa_1 \geq \frac{c}{36(1+c)} $ and $\alpha \in R_{c}$, by an analogous argument as above, the intersection

$$ \bigcup_{a \in P} S(\frac{c_1}{3}a; \kappa \frac{c_1}{3} c_0) \cap \bigcup_{a' \in P}A( S(\frac{c_1}{3}a'; \kappa \frac{c_1}{3} c_0))$$

is empty only if, 

$$c_1^2 \cdot \kappa^2 < \left(1-\frac{c}{36(1+c)}\right).$$

On the other hand, if such intersection is non-empty, then the area of the intersection \[S(\frac{c_1}{3}a; \frac{c_1}{3} c_0) \cap A(S(\frac{c_1}{3}a'; \frac{c_1}{3} c_0))\] for some  pair $(a,a') \in P^2$ is larger than $\frac{(1-\kappa)^2}{4} (\frac{c_1^2}{9})(1+|\alpha|^2)$. That way, if $c \geq \sfrac{c_1}{3} > \sfrac{1}{4}$ and $v \in \mathcal{C}$ identified by $(\ul{\theta},\ul{\theta}',\text{Id},A)$ with $A \in X^{\kappa_1}_\alpha $ and $\alpha \in R_{\frac{c_1}{3}}$, we can find a pair of letters $(a,a') \in P^2$ such that the renormalization  $ \ti{\mathcal{T}}_{{\theta}_0a,{\theta}'_0a'}$ carries $v$ to $(\ul{\theta}a,\ul{\theta}'a',\text{Id},A')$ with $A' \in  X^{\kappa_0}_\alpha$, whenever $c_1^2 \geq \frac{179}{180 (1-2\sqrt{\kappa_0})^2  }$.  

We are almost able to construct the recurrent compact set. Before that, fix $\frac{1}{4} < c' \leq \frac{c_1}{3}$, let $\kappa_2 < \frac{c'}{36(1+c')}$ and $\alpha \in R_{c'} \setminus R_{\frac{c_1}{3}}$. We divide into cases:

\begin{enumerate}
    \item $|\alpha|^2> \frac{3}{c_1}$
    
    In this case, if we consider any $v \equiv (\ul{\theta},\ul{\theta}', \text{Id}, A) \in \mathcal{C}$ such that  $A = \alpha \cdot z + \beta \in X^{\kappa_2}_\alpha$ then, by choosing $c_1$ sufficiently close to $1$ and $\kappa_0$ close to $0$ we can find a renormalization operator $ \ti{\mathcal{T}}_{\emptyset,{\theta'}_0a} $ that sends $v$ to $(\ul{\theta}, \ul{\theta}'a',\text{Id},\ti{A'} )$ with $A'=\alpha' \cdot z + \beta', \,\alpha' \in R_{\frac{c_1}{3}},\,\beta' \in \C $ and $A' \in X^{\kappa_0}_{\alpha'}$. Checking the formula for the renormalization operator we have that $\alpha' = \alpha \cdot \frac{c_1}{3} \in R_{\frac{c_1}{3}}$ by definition, so the hard part is to control the translation part of $A'$.
    
    Now, we know that the area of $S(0;c_0)\cap A(S(0;c_0))$ is at most $c_0^2(1+|\alpha|^2)(1-\kappa_2)$. This implies that, for $c_1$ really close to one the area of $S(0;c_0) \cap \bigcup_{a \in P} A(S(\frac{c_1}{3}a;c_0 \cdot \frac{c_1}{3})$ is at least $c_0^2(1+c_1^2|\alpha^2| - (1+|\alpha|^2)(1-\kappa_2)) $. By pigeonhole principle we want to find a choice of $c_1$ and $\kappa_0$ that satisfies:
    
    $$ c_0^2\frac{(1+c_1^2|\alpha^2| - (1+|\alpha|^2)(1-\kappa_2))}{9} \geq c_0^2\cdot \kappa_0 \left(1+\left(\frac{c_1}{3}\right)^2|\alpha|^2\right) .$$
    
    But after some manipulation and applications of the hypothesis $|\alpha|^2> \frac{3}{c_1}$ it is enough to guarantee that:
    
    $$c_1^2 \geq  1+ \frac{12\kappa_0 -4\kappa_2}{3}.$$

    Which is always possible if $\kappa_0$ is sufficiently small and $c_1$ close to $1$.
    
    \item  $|\alpha|^2 < \frac{c_1}{3}$
    
    This case is very similar to the previous one; the difference is that for  $v \equiv (\ul{\theta},\ul{\theta}',\text{Id}, A) \in \mathcal{C} $ such that  $A=\alpha\cdot z+\beta\in X^{\kappa_2}_\alpha$, $a \in P$, we find a renormalization operator $\ti{\mathcal{T}}_{{\theta}_0a,\emptyset}$ that sends $v$ to $(\ul{\theta}a, \ul{\theta}',\text{Id},\ti{A'})$ with $A'=\alpha' \cdot z + \beta', \,\alpha' \in R_{\frac{c_1}{3}},\,\beta' \in \C$ and $A'\in X^{\kappa_0}_{\alpha'}$. Once again, $\alpha'=\alpha \cdot \frac{3}{c_1} \in R_{\frac{c_1}{3}}$, so we proceed to check the translation part.
    
    Again, the area of $S(0;c_0)\cap A(S(0;c_0))$ is at most $c_0^2(1+|\alpha|^2)(1-\kappa_2)$. By definition $A'=(F^{\ul{\theta}a})^{-1}\circ A$, but since $F^{\ul{\theta}a}$ is affine, $A' \in X^{\kappa_0}_{\alpha'}$ if, and only if, the area of $(F^{\ul{\theta}a}(S(0;c_0)  = S(\frac{c_1}{3}a;\cdot c_0\cdot \frac{c_1}{3}) )\cap A(S(0;c_0)$ is larger than
    
    $$\displaystyle{c_0^2 \cdot  \kappa_0 \left((\frac{c_1}{3})^2+|\alpha|^2\right)}.$$
    
    By the same logic as the previous item we are left with the inequality
    
    $$c_0^2\frac{(c_1^2+|\alpha|^2)(1-\kappa_2)}{9} \geq c_0^2 \cdot \kappa_0^2 \left((\frac{c_1}{3})^2 + |\alpha|^2\right) .$$
    
    After some manipulation and applications of the hypothesis $|\alpha|^2 < \frac{c_1}{3}$ it is enough to guarantee that:
    
    $$c_1^2 \geq  1+ \frac{12\kappa_0 -4\kappa_2}{3}.$$
    
    It is no surprise that the quota would be the same given the geometry of the problem.

    \end{enumerate}

Thusly, we can construct a \textit{ recurrent compact set} $\mathcal{L} \subset \mathcal{C}$ (which we draw in the final page) as a union $\mathcal{L} = \mathcal{L}^{-1} \cup \mathcal{L}^{0} \cup \mathcal{L}^{1}$, $\mathcal{L}^i = {P^{\N}}^- \times {P^{\N}}^- \times \text{Id} \times L^i, \text{ for } i=-1,\,0,\,1$, where:

\begin{itemize}
    \item $\displaystyle{{L}^1 = \bigcup_{\alpha \in R_{c'}^1} X^{\kappa_2}_{\alpha}} $; $\displaystyle{R_{c'}^1 = \left\{ \alpha \in R_{c'}, \, |\alpha| > \sqrt{\frac{3}{c_1}}\right\} }$
    \item $\displaystyle{{L}^{-1} = \bigcup_{\alpha \in R_{c'}^{-1} } X^{\kappa_2}_\alpha}$; $\displaystyle{ R_{c'}^{-1} = \left\{\alpha \in R_{c'}, \, |\alpha| < \sqrt{\frac{c_1}{3}}\right\}} $
    \item $\displaystyle{{L}^0 = \bigcup_{\alpha \in R_{c'} \setminus (R_{c'}^1 \cup R_{c'}^{-1} )} X^{\kappa_0}_\alpha,                           }$
\end{itemize}

and $\kappa_0,\, \kappa_2, \text{ and } c_1$ are chosen respecting the constraints we have already fixed. As we have already shown, for almost all $v \in \mathcal{L}$ one of the renormalization operators $T$ we already found above makes $T(v) \in \text{int}(\mathcal{L})$. The exceptions are the $v=(\ul{\theta},\ul{\theta'}, \text{Id}, A)$ where $A=\alpha \cdot z + \beta$  with $|\alpha|^2= \frac{c_1}{3} \text{ or } \frac{c_1}{3} $ and $A \in X^{\kappa_0}_\alpha \setminus A \in X^{\kappa_2}_\alpha$. Yet, in this case we can repetitively apply the renormalization operators previously described appropriate to this case to obtain a sequence $v_n = T_{\ul{\theta}^na_n,{\ul{\theta}'}^na'_n} (v_{n-1})$ for which 

\begin{itemize}
    \item $v_0=v$ 
    \item $({\ul{\theta}^n,{\ul{\theta}'}^n}) = ({\ul{\theta}^{n-1}a_{n-1},{\ul{\theta}'}^{n-1}{a'}_{n-1}})$ and
    \item $v_n\in X^{\lambda^n\kappa_0}_\alpha$
\end{itemize}

hence, if $n$ is large enough ${\lambda^n\kappa_0} > \kappa_2 \implies v_n \in \text{int}(\mathcal{L})$ as we wished to obtain.

\end{proof}

\begin{remark}
Given the constraints on the proof above, more importantly $\kappa_2 < \frac{c'}{36(1+c')} $, we can calculate that for \[c_1^2 \geq \displaystyle{\min_{\kappa_0 \in [0,1]} \max\bigg\{\frac{9-9\kappa_0}{9-8\kappa_0}, 1+\frac{12\kappa_0-\frac{1}{90}}{3}, \frac{179}{180 (1-2\sqrt{\kappa_0})^2   }\bigg\}} = x^2\] 
$\kappa_0$ can be chosen such that the construction above works at all steps. A simple calculation (it can be done by hand, using $\kappa_0=10^{-6}$) show that $x^2 <1-\sfrac{(10^-6)}{9}$ and so, since by definition $c_1 < \frac{3c_0}{2+c_0}$ we can estimate that for all $\delta < 7 \cdot 10^{-8}$ the Cantor set defined in this section has a stable intersection with itself (provided we choose $c_1 \text{ sufficiently close to } \frac{3c_0}{2+c_0}$). This quota is not optimal and may be greatly improved by adaptations in the argument, since a lot of area is \say{wasted} in the estimatives, especially in the part regarding $\frac{179}{180 (1-2\sqrt{\kappa_0})^2}$.
\end{remark}

\begin{figure}[H]
\includegraphics[scale=1]{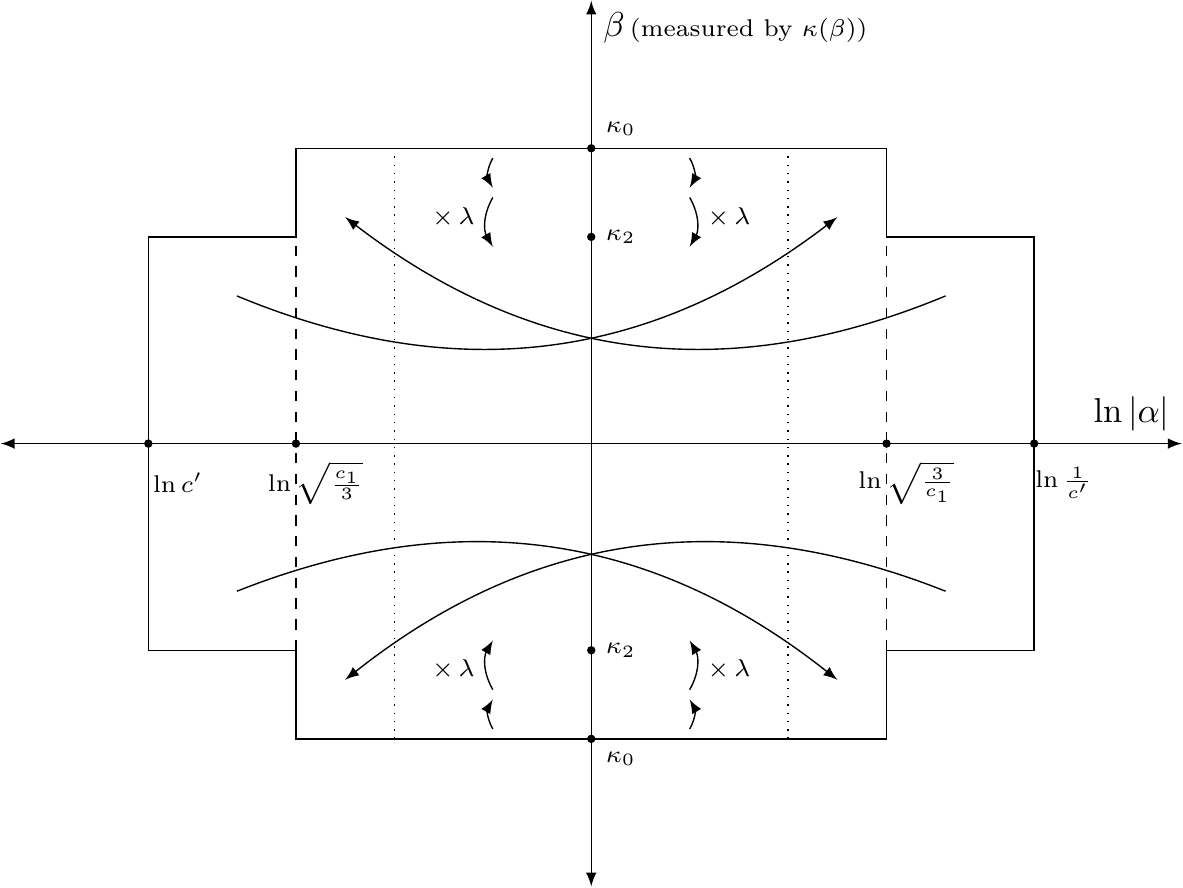}
\caption{A diagram representing the recurrent compact set. The coordinate $\beta$ is measured by $\kappa(\beta)$, the proportion of area of intersection between the original square and its image by $\alpha\cdot z + \beta$. This way, being closer to the axis means that this area is close to the maximal proportion, whereas being far means it is close to zero. The arrows indicate the action of the renormalizations considered above. Note that [Id, Id] is represented by the origin.}
 \label{fig:Test}
\end{figure}

\bibliographystyle{abbrv}
\bibliography{refs}

\end{document}